\def\CC{\mathord{\mathbb{C}}}
\def\ZZ{\mathord{\mathbb{Z}}}
\def\QQ{\mathord{\mathbb{Q}}}
\def\color#1{}
\title[Grothendieck polynomials and the boson-fermion correspondence]{Grothendieck polynomials\\ and the boson-fermion correspondence}
\author{Shinsuke Iwao}
\address{Department of Mathematics, Tokai University, 4-1-1, Kitakaname, Hiratsuka, Kanagawa 259-1292, Japan.}
\email{iwao@tokai.ac.jp}
\date{\today}
\newtheorem{thm}{Theorem}[section]
\newtheorem{prop}[thm]{Proposition}
\newtheorem{lemma}[thm]{Lemma}
\newtheorem{defi}[thm]{Definition}
\newtheorem{example}[thm]{Example}
\newtheorem{rem}{Remark}[section]
\newtheorem{cor}[thm]{Corollary}
\def\CC{\mathord{\mathbb{C}}}
\def\QQ{\mathord{\mathbb{Q}}}
\def\ZZ{\mathord{\mathbb{Z}}}
\def\oG{\overline{G}}
\newcommand{\Tableau}[1]{%
\def\emp{\emptyset\bl}
\def\maru##1{\scalebox{1.3}{\lower 7pt\hbox{\textcircled{\scriptsize$##1$}}}\bl}%
\def\tama##1{\scalebox{1.3}{\lower 7pt\hbox{\textcircled{\scriptsize$##1$}}}}%
{\text{\tableau[sY]{#1}}}%
}%
\def\h@iti#1{%
\bgroup%
\let\\=\cr%
\def\r{\hbox to 0pt{$\to$}}%
\def\d{\vbox to 0pt{\hbox to 0pt{$\searrow$}}}%
\vcenter{\tabskip=0pt\halign{&$##$\hspace{10pt}\cr#1\crcr}}%
\egroup%
}
\newcommand{\haiti}[1]{\hspace{4.5pt}\h@iti{#1}\hspace{-4.5pt}}
\def\sayou#1{%
\bgroup%
\let\\=\cr%
\def\+{\hbox{\lower 5pt\hbox{\scalebox{3.5}{$+$}}}}%
\def\-{\hbox{\lower 5pt\hbox{\scalebox{3.5}{$+$}\lower -2pt\hbox to 0pt{\hspace{-19.9pt}\scalebox{2.5}{$\circ$}}}}}%
\vcenter{\tabskip=0pt\halign{&\hfil$##$\hfil\cr#1\crcr}}%
\egroup%
}
\def\ket#1{\vert #1 \rangle}
\def\bra#1{\langle #1 \vert}
\def\zet#1{\left\vert{#1}\right\vert}
\def\ee{\mathbf{e}}
\begin{document}

\begin{abstract}
In this paper we study algebraic and combinatorial properties of symmetric Grothendieck polynomials and their dual polynomials by means of the boson-fermion correspondence.
We show that these symmetric functions can be expressed as a vacuum expectation value of some operator that is written in terms of free-fermions.
By using the free-fermionic expressions, we obtain alternative proofs of determinantal formulas and Pieri type formulas.

\smallskip
\noindent \textbf{Keywords.} Grothendieck polynomials, boson-fermion correspondence, free fermions
\end{abstract}
\maketitle

\section{Introduction}\label{sec:intro}

Grothendieck polynomials \cite{lascoux1982structure,lascoux1983symmetry} are $K$-theoretic versions of Schubert polynomials that represent a Schubert variety in the $K$-theory of the flag variety.
If it represents a Schubert class indexed by a Grassmannian permutation~\cite{buch2002littlewood} (see also \cite[\S 10.6]{fulton_1996}), a Grothendieck polynomial is a symmetric polynomial in finitely many variables.
Such symmetric Grothendieck polynomials are seen as $K$-theoretic analogs of Schur polynomials~\cite{macdonald1998symmetric}.

Let $\lambda=(\lambda_1\geq \dots\geq \lambda_\ell>0)$ be a partition of a natural number, and $\beta$ be a parameter.
A ($\beta$-) \textit{Grothendieck polynomial}\footnote{
The polynomial $G_\lambda(x_1,\dots,x_n)$ is usually called the \textit{$\beta$-Grothendieck polynomial}, which is a deformation of the ordinary Grothendieck polynomial introduced by Fomin-Kirillov~\cite{fomin1994grothendieck}.
The $\beta$-Grothendieck polynomial reduces to the Schur polynomial $s_\lambda(x_1,\dots,x_n)$ when $\beta=0$, and to the ordinary Grothendieck polynomial when $\beta=-1$. 
We will drop the term ``$\beta$-'' throughout the paper to simplify the notation.
}
$G_\lambda(x_1,\dots,x_n)$ ($\ell\leq n$) is a symmetric polynomial in $x_1,\dots,x_n$ that is expressed as follows~\cite{IKEDA201322,kirillov2016some,yeliussizov2017duality}: 
\begin{equation}\label{eq:ratio}
G_\lambda(x_1,\dots,x_n)=
\frac{\det\left(
x_i^{\lambda_j+n-j}(1+\beta x_i)^{j-1}
\right)_{1\leq i,j\leq n}
}{
\prod_{1\leq i<j\leq n}(x_i-x_j)
}.
\end{equation}
There also exists the Jacobi-Trudi type identity \cite{kirillov2016some,lenart2000combinatorial}:
\begin{equation}\label{eq:Jacobi-Trudi}
G_\lambda(x_1,\dots,x_n)=
\det
\left(
\sum_{m=0}^{\infty}
\binom{i-1}{m}
\beta^{m}
h_{\lambda_i-i+j+m}(x_1,\dots,x_n)
\right)_{1\leq i,j\leq n},
\end{equation}
where $h_i(x_1,\dots,x_n)$ is the $i$-th complete symmetric polynomial.

Recently, many authors have been studying connections between these ``$K$-theoretic polynomials'' and the theory of classical/quantum integrable systems.
In \cite{motegi2013vertex,motegi2014k}, Motegi-Sakai showed that Grothendieck polynomials (and their generalizations) are derived from calculations of a wave function of quantum integrable systems such as TASEP and melting crystals. 
Nagai and the author of this paper~\cite{iwao2018discrete} have reported that some class of dual stable Grothendieck polynomials are naturally obtained from tau functions of the relativistic Toda equation with unipotent eigenvalues.

The purpose of this paper is to present a new characterization of $G_\lambda(x_1,\dots,x_n)$ by means of the boson-fermion correspondence (see, for example, \cite{jimbo1983solitons,kac2013bombay}), which is a powerful algebraic tool in various fields such as symmetric polynomial theory, mathematical physics, integrable systems, \textit{etc.}
We show that the stable symmetric Grothendieck polynomial $G_\lambda(x_1,x_2,\dots)$,
that is, an infinite series of symmetric functions with $G_\lambda(x_1,\dots,x_n,0,0,\dots)=G_\lambda(x_1,\dots,x_n)$, 
can be expressed as a vacuum expectation value of some operator given in terms of free-fermions (Theorem \ref{thm:main}).
By using this expression, we derive a similar characterization of the dual  stable Grothendieck polynomial (Section \ref{sec:gr}).

As an application of our presentation, we give new proofs of the following results, which have been given by previous researches:
\begin{enumerate}
\def\labelenumi{(\arabic{enumi}).}
\item ``Another'' determinantal formula for Grothendieck polynomials (Proposition \ref{prop:another_determinant_G}).
This is a special case of the results by Hudson-Ikeda-Matsumura-Naruse~\cite{HUDSON2017115}.
\item A determinantal formula for dual stable Grothendieck polynomials (Proposition \ref{prop:determinant_g}), which was originally given by \cite{lascoux2014finite,shimozono2011}.
\item $G_\lambda(x)$ expansion of symmetric polynomials of the form $s_\lambda(x)G_\mu(x)$ (Proposition \ref{prop:product_sG}).
\item Pieri type formulas for dual stable Grothendieck polynomials (Section \ref{sec:app2}).
Items (3--4) are special cases of the results given by Yeliussizov~\cite{yeliussizov2017duality}.
\end{enumerate}


\subsection{Organization of the paper}

In Section \ref{sec:free_fermions}, we first give a brief review of the theory of free fermions (\S \ref{sec:prelim}--\S \ref{sec:boson-fermion}).
Then we introduce two new operators $e^{\Theta}$ and $e^{\theta}$  and some simple lemmas in \S \ref{sec:etheta_eTheta}.
In Section \ref{sec:Gr}, we present a free-fermionic presentation of stable Grothendieck polynomials.
For this, it is useful to consider a symmetric function $G^r_\lambda(x)$ (\S \ref{sec:def_of_Gr}), which is ``sufficiently near'' to the Grothendieck polynomial $G_\lambda(x_1,\dots,x_r)$ (see Proposition \ref{prop:G_and_Gr}).
We discuss about the ``stable limit'' of the sequence $G^1_\lambda(x),G^2_\lambda(x),\dots$.
Since the sequence itself is not stable, the limit ``$\lim\limits_{r\to \infty}G^r_\lambda(x)$'' fails to be contained in $\Lambda$, the ring of symmetric functions.
However, the limit will be defined properly in some completed space $\widehat{\Lambda}\supset \Lambda$ 
(see \S\ref{sec:completion}).
We show that the limit $\lim\limits_{r\to \infty}G^r_\lambda(x)$ is expressed as a vacuum expectation value of a certain operator written in free-fermions, and is equal to the stable Grothendieck polynomial.
``Another'' determinantal formula for the stable Grothendieck polynomials is shown in \S \ref{sec:another_determinantial_formula}. 
In Section \ref{sec:gr}, we obtain a similar free-fermionic presentation of dual stable Grothendieck polynomials.
Their determinantal formula is also given (\S\ref{sec:determinant_g}).

In Sections \ref{sec:app1} and \ref{sec:app2}, we discuss Pieri type formulas for $K$-theoretic polynomials.
By using our free-fermionic presentations, we define an action of non-commutative Schur polynomials~\cite{fomin1998noncommutative} on the dense linear subspace of $\widehat{\Lambda}$ spanned by $\{G_\lambda(x)\,\vert\,\lambda \mbox{ : partition} \}$.
We also define their action on the subspace spanned by the family $\{g_\lambda(x)\,\vert\,\lambda \mbox{ : partition} \}$, where $g_\lambda(x)$ is the dual stable Grothendieck polynomial.
As a result, we derive the $G_\lambda(x)$ expansion of symmetric polynomials of the form $s_\lambda(x)G_\mu(x)$ (Proposition \ref{prop:product_sG}) and Pieri type formulas for dual stable Grothendieck polynomials (Section \ref{sec:app2}).

\section{Free fermions}\label{sec:free_fermions}

\subsection{Preliminaries}\label{sec:prelim}


Let $k$ be a field of characteristic $0$.
(We will put $k=\CC(\beta)$ in the sequel.)
We consider a $k$-algebra $\mathcal{A}$ generated by \textit{free fermions} $\psi_n$, $\psi_n^\ast$ ($n\in \ZZ$) with the following anti-commutative relations:
\begin{equation}\label{eq:free-fermions-relation}
[\psi_m,\psi_n]_+=[\psi^\ast_m,\psi^\ast_n]_+=0,\qquad
[\psi_m,\psi^\ast_n]_+=\delta_{m,n},
\end{equation}
where $[A,B]_+=AB+BA$.

Let $\ket{0}$, $\bra{0}$ be \textit{vacuum vectors} that satisfy
\[
\psi_m\ket{0}=\psi^\ast_n\ket{0}=0,\quad
\bra{0}\psi_n=\bra{0}\psi^\ast_m=0,\qquad m< 0,\ n\geq 0.
\]
The \textit{Fock space} (over $k$) is the $k$-space $\mathcal{F}$ that is generated by vectors of the form
\begin{equation}\label{eq:elementary-vactors}
\psi_{n_1}\psi_{n_2}\cdots \psi_{n_r}\psi^\ast_{m_1}\psi^\ast_{m_2}\cdots \psi^\ast_{m_s}\ket{0},\
(r,s\geq 0,\ n_1>\dots>n_r\geq 0>m_s>\dots>m_1).
\end{equation}
We also consider the $k$-space $\mathcal{F}^\ast$ that is generated by vectors
\begin{equation}\label{eq:elementary-vetcors-dual}
\bra{0}\psi_{m_s}\cdots \psi_{m_2}\psi_{m_1}\psi^\ast_{n_r}\cdots \psi^\ast_{n_2}\psi^\ast_{n_1},\
(r,s\geq 0,\ n_1>\dots>n_r\geq 0>m_s>\dots>m_1).
\end{equation}

The vectors of the form \eqref{eq:elementary-vactors} are linearly independent over $k$.
This fact can be checked by identifying $\mathcal{F}$ with an \textit{infinite wedge presentation of a Clifford algebra} ~\cite[\S 4 and \S 5.2]{kac2013bombay}. 
See Remark \ref{rem:added} below.
Similarly, the vectors of the form \eqref{eq:elementary-vetcors-dual} are proved to be linearly independent.

Using the anti-commutative relations \eqref{eq:free-fermions-relation} repeatedly, we find $\ket{v}\in \mathcal{F}\Rightarrow \psi_n\ket{v}\in \mathcal{F}$ and $\psi^\ast_n\ket{v}\in \mathcal{F}$.
Hence $\mathcal{F}$ is a left $\mathcal{A}$-module.
We can also check that $\mathcal{F}^\ast$ is a right $\mathcal{A}$-module.

\begin{rem}\label{rem:added}
Let $V=\bigoplus_{i\in \ZZ}k\cdot v_i$ be a $k$-space with a fixed basis $\{v_i\,|\,i\in \ZZ \}$ and $\bigwedge^\infty V$ be the infinite wedge space.
For $m\in \ZZ$, consider the subspace $F^{(m)}\subset \bigwedge^\infty V$ generated by all vectors of the form 
\begin{equation}\label{eq:basis}
v_{i_1}\wedge v_{i_2}\wedge \cdots \quad
(i_1>i_2>\cdots,\  i_k=-k+m \mbox{ for } k\gg 1).
\end{equation}
Set $F:=\bigoplus_{m\in \ZZ}F^{(m)}$.
We can define the actions of $\psi_j,\psi^\ast_j$ on $F$
\cite[\S 5.2]{kac2013bombay} by
\begin{equation}\label{eq:fermion_action}
\begin{aligned}
&\psi_j(v_{i_1}\wedge v_{i_2}\wedge\cdots)=v_j\wedge v_{i_1}\wedge v_{i_2}\wedge\cdots,\\
&\begin{aligned}
\psi^\ast_j(v_{i_1}\wedge v_{i_2}\wedge\cdots)= & 
f(v_{i_1}) v_{i_2}\wedge v_{i_3}\wedge v_{i_4}\wedge \cdots
-
f(v_{i_2}) v_{i_1}\wedge v_{i_3}\wedge v_{i_4}\wedge\cdots\\
&+
f(v_{i_3}) v_{i_1}\wedge v_{i_2}\wedge v_{i_4}\wedge\cdots
-\cdots,
\end{aligned}
\end{aligned}
\end{equation}
where $f:V\to k$ is the $k$-linear map that sends $v_j\mapsto 1$ and $v_k\mapsto 0$ $(k\neq j)$.
We can check that they satisfy the relation \eqref{eq:free-fermions-relation}.
By sending the vacuum vector $\ket{0}$ to the element $v_{-1}\wedge v_{-2}\wedge v_{-3}\wedge \cdots \in F$, we obtain a left $\mathcal{A}$-module homomorphism $\mathcal{F}\to F$, which is in fact an isomorphism.
Rigorous statements and proofs of these facts can be found in the textbooks \cite[\S 5]{kac2013bombay} and \cite[\S 4]{miwa2012solitons}.
The review article~\cite[\S 2]{alexandrov2013free} by Alexandrov and Zabrobin is helpful to understand the free-fermion formalism.
\end{rem}

For an integer $m$, we define the \textit{shifted vacuum vectors} $\ket{m}\in \mathcal{F}$ and $\bra{m}\in\mathcal{F}^\ast$ by
\[
\ket{m}=
\begin{cases}
\psi_{m-1}\psi_{m-2}\cdots \psi_0\ket{0}, & m\geq 0,\\
\psi^\ast_{m} \cdots\psi^\ast_{-2}\psi^\ast_{-1}\ket{0}, & m<0,
\end{cases}
\]
and
\[
\bra{m}=
\begin{cases}
\bra{0}\psi^\ast_0\psi^\ast_1\dots \psi^\ast_{m-1}, & m\geq 0,\\
\bra{0}\psi_{-1}\psi_{-2}\dots \psi_{m}, & m<0.
\end{cases}
\]

We define an anti-algebra involution on $\mathcal{A}$ by
\[
{}^\ast:\mathcal{A}\to \mathcal{A};\quad \psi_n\leftrightarrow \psi_n^\ast,
\]
that is, a $k$-linear isomorphism with $(ab)^\ast=b^\ast a^\ast$ and $(a^\ast)^\ast=a$.
Further, we have an isomorphism of $k$-spaces
\[
\omega:\mathcal{F}\to {\mathcal{F}}^\ast,\quad X\ket{0}\mapsto \bra{0}{X}^\ast.
\]

The \textit{vacuum expectation value} is the unique $k$-bilinear map
\[
{\mathcal{F}}^\ast\otimes_k\mathcal{F}\to k,\quad 
\bra{w}\otimes \ket{v}\mapsto \langle{w}\vert v\rangle
\]
that is determined by $\langle 0\vert 0\rangle=1$,
$(\bra{w}\psi_n) \ket{v}=\bra{w} (\psi_n\ket{v})$,
and
$(\bra{w}\psi_n^\ast) \ket{v}=\bra{w} (\psi_n^\ast\ket{v})$.
For any expression $X$, we write 
$\bra{w}X\ket{v}:=(\langle{w}\vert X)\ket{v}
=\bra{w}(\vert X\ket{v})
$.
The expectation value $\bra{0}X\ket{0}$ is often abbreviated as $\langle X\rangle$.

\begin{rem}
The existence of the vacuum expectation value can be checked by using the infinite wedge presentation as follows.
Let $(\cdot,\cdot)$ be the non-degenerate symmetric $k$-bilinear form on $F$ where the set of vectors \eqref{eq:basis} are orthonormal.
From \eqref{eq:fermion_action}, we have $(\psi_i\ket{v},\ket{w})=(\ket{v},\psi_i^\ast\ket{w})$, which means $\psi_i$ and $\psi^\ast_i$ are adjoint to each other.
The vacuum expectation value is defined by $\langle{w}\vert v\rangle:=(\omega(\ket{w}),\ket{v})$.
\end{rem}

\subsection{Wick's theorem}\label{sec:Wick}

In many cases, vacuum expectation values can be calculated by using the following \textit{Wick's theorem}.

\begin{thm}[Wick's theorem
(see {\cite[\S 2]{alexandrov2013free}, \cite[Exercise 4.2]{miwa2012solitons}})
]\label{thm:Wick}
Let $\{m_1,\dots,m_r\}$ and $\{n_1,\dots,n_{r}\}$ be sets of integers.
Then we have
\[
\langle 
\psi_{m_1}\cdots\psi_{m_{r}}
\psi^\ast_{n_r}\cdots\psi^\ast_{n_{1}}
\rangle
=\det(\langle \psi_{m_i}\psi^\ast_{n_j} \rangle)_{1\leq i,j\leq r}.
\]
\end{thm}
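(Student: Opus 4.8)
The plan is to prove the identity by induction on $r$, the common number of creation and annihilation operators, reducing the $r$-fold vacuum expectation value to a cofactor (Laplace) expansion of the determinant. The mechanism is to carry the leftmost fermion $\psi_{m_1}$ all the way to the right through the entire word, using only the anti-commutation relations \eqref{eq:free-fermions-relation} and the vacuum conditions; each time $\psi_{m_1}$ crosses a dual fermion $\psi^\ast_{n_j}$ it either annihilates it (producing a contraction $\delta_{m_1,n_j}$) or passes through with a sign, and the surviving word is exactly an $(r-1)$-fold expectation value to which the inductive hypothesis applies.

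First I would record the two-point function, which supplies the matrix entries. Directly from $\langle 0|\psi_m=0$ for $m\geq 0$ we get $\langle\psi_m\psi^\ast_n\rangle=0$ whenever $m\geq 0$; and for $m<0$ the relation $\psi_m\psi^\ast_n=\delta_{m,n}-\psi^\ast_n\psi_m$ together with $\psi_m\ket{0}=0$ gives $\langle\psi_m\psi^\ast_n\rangle=\delta_{m,n}$. Thus each entry $\langle\psi_{m_i}\psi^\ast_{n_j}\rangle$ equals $\delta_{m_i,n_j}$ when $m_i<0$ and $0$ when $m_i\geq 0$. This also disposes of the case $m_1\geq 0$ at once: the left-hand side vanishes because $\langle 0|\psi_{m_1}=0$, while the right-hand side vanishes because its entire first row is zero. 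Hence I may assume $m_1<0$, so that $\psi_{m_1}\ket{0}=0$.

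Anti-commuting $\psi_{m_1}$ past $\psi_{m_2}\cdots\psi_{m_r}$ contributes only the sign $(-1)^{r-1}$, since $[\psi_a,\psi_b]_+=0$, and then pushing it through the block $\psi^\ast_{n_r}\cdots\psi^\ast_{n_1}$ by repeated use of $\psi_{m_1}\psi^\ast_{n_j}=\delta_{m_1,n_j}-\psi^\ast_{n_j}\psi_{m_1}$ yields
\[
\psi_{m_1}\psi^\ast_{n_r}\cdots\psi^\ast_{n_1}=\sum_{j=1}^{r}(-1)^{r-j}\delta_{m_1,n_j}\,\psi^\ast_{n_r}\cdots\widehat{\psi^\ast_{n_j}}\cdots\psi^\ast_{n_1}+(-1)^r\psi^\ast_{n_r}\cdots\psi^\ast_{n_1}\psi_{m_1},
\]
where the hat denotes omission. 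The final term annihilates $\ket{0}$ because $m_1<0$. Collecting the signs $(-1)^{r-1}(-1)^{r-j}=(-1)^{j+1}$ and substituting $\delta_{m_1,n_j}=\langle\psi_{m_1}\psi^\ast_{n_j}\rangle$ (valid since $m_1<0$), the expectation value becomes $\sum_{j=1}^{r}(-1)^{j+1}\langle\psi_{m_1}\psi^\ast_{n_j}\rangle\,\langle\psi_{m_2}\cdots\psi_{m_r}\psi^\ast_{n_r}\cdots\widehat{\psi^\ast_{n_j}}\cdots\psi^\ast_{n_1}\rangle$.

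It remains to identify each $(r-1)$-fold factor with a minor. The reversed ordering $\psi^\ast_{n_r}\cdots\psi^\ast_{n_1}$ of the dual fermions is precisely what guarantees that, after deleting $\psi^\ast_{n_j}$, the surviving word is again of the form covered by the induction hypothesis, so that factor equals the minor $M_{1j}$ obtained by striking row $1$ and column $j$ from $(\langle\psi_{m_i}\psi^\ast_{n_k}\rangle)$; the displayed sum is then exactly the Laplace expansion of the determinant along its first row, closing the induction (the base case $r=1$ being a tautology). I expect the one genuinely delicate point to be the sign and index bookkeeping: one must verify that reversing the $\psi^\ast$-indices makes the inductively produced $(r-1)\times(r-1)$ determinant coincide with the correct minor $M_{1j}$ in the correct column order, so that no spurious permutation sign intervenes. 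Everything else reduces to direct application of \eqref{eq:free-fermions-relation} and the vacuum conditions.
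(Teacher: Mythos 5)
Your induction is correct: the two-point function computation, the sign $(-1)^{r-1}(-1)^{r-j}=(-1)^{j+1}$, the vanishing of the boundary term since $m_1<0$, and the identification of the surviving $(r-1)$-fold expectation value with the minor $M_{1j}$ (the reversed $\psi^\ast$-ordering is preserved after deleting $\psi^\ast_{n_j}$, so no extra permutation sign appears) all check out, giving exactly the cofactor expansion along the first row. Note, however, that the paper offers no proof of this theorem at all --- it is quoted as a known result with citations to Alexandrov--Zabrodin and Miwa--Jimbo--Date --- so there is nothing to compare against; your argument is the standard one found in those references.
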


For sets of integers $m=\{m_1,\dots,m_r\}$, $n=\{n_1,\dots,n_s\}$ with $m_1>\dots>m_r$, $n_1>\dots>n_s$, we write
\[
\delta_{m,n}=
\begin{cases}
1,& r=s \mbox{ and }m_i=n_i \mbox{ for all $i$},\\
0,&\mbox{otherwise}.
\end{cases}
\]
\begin{cor}\label{cor:dual}
Let $m=\{m_1,\dots,m_r\},n=\{n_1,\dots,n_s\}$ be sets of integers with $m_1>\dots>m_{r}> -r$ and $n_1>\dots>n_{s}> -s$.
Then, 
\[
\bra{-r}
\psi^\ast_{m_r}\cdots\psi^\ast_{m_{1}}
\psi_{n_1}\cdots\psi_{n_{s}}
\ket{-s}
=\delta_{m,n}.
\]
\end{cor}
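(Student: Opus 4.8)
The plan is to reduce the bracket to a single inner product of two explicit basis vectors in the infinite wedge realization of Remark~\ref{rem:added}, and then read off the answer from the orthonormality of the wedge basis. I would deliberately avoid applying Wick's theorem (Theorem~\ref{thm:Wick}) directly: the operator word here consists of interleaved blocks $\psi\cdots\psi^\ast\cdots\psi\cdots\psi^\ast$, and bringing it into the nested form $\psi\cdots\psi\,\psi^\ast\cdots\psi^\ast$ required by the theorem spawns cross-contraction terms that obscure the computation rather than simplifying it.

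First I would use the adjointness $(\psi_i\ket{v},\ket{w})=(\ket{v},\psi_i^\ast\ket{w})$ recorded in the remark following the definition of the vacuum expectation value. Writing $\bra{-r}=\omega(\ket{-r})$ and peeling off the operators $\psi^\ast_{m_r},\dots,\psi^\ast_{m_1}$ one at a time via $\bra{w}\psi^\ast_n=\omega(\psi_n\,\omega^{-1}\bra{w})$, I obtain
\[
\bra{-r}\psi^\ast_{m_r}\cdots\psi^\ast_{m_1}\psi_{n_1}\cdots\psi_{n_s}\ket{-s}
=\bigl(\psi_{m_1}\cdots\psi_{m_r}\ket{-r},\ \psi_{n_1}\cdots\psi_{n_s}\ket{-s}\bigr).
\]
Second, I would identify each factor with a basis vector of the form \eqref{eq:basis}. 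In the wedge model $\ket{-r}=v_{-r-1}\wedge v_{-r-2}\wedge\cdots$, and since $m_i>-r$ the vectors $v_{m_i}$ are absent from this sea while the $m_i$ are distinct, so by \eqref{eq:fermion_action}
\[
\psi_{m_1}\cdots\psi_{m_r}\ket{-r}
=v_{m_1}\wedge\cdots\wedge v_{m_r}\wedge v_{-r-1}\wedge v_{-r-2}\wedge\cdots,
\]
whose indices are already strictly decreasing (as $m_r>-r>-r-1$); hence it is literally a basis vector \eqref{eq:basis}, with no sign. The same holds for $\psi_{n_1}\cdots\psi_{n_s}\ket{-s}$. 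Because the wedge basis is orthonormal for the form, the inner product is $1$ if the two basis vectors coincide and $0$ otherwise.

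Finally I would compare the two occupied index sets
\[
S_1=\{m_1,\dots,m_r\}\cup\{-r-1,-r-2,\dots\},\qquad
S_2=\{n_1,\dots,n_s\}\cup\{-s-1,-s-2,\dots\}.
\]
The crucial observation is that $-r-1$ is the largest integer $j$ for which every integer $\le j$ lies in $S_1$ (indeed $-r\notin S_1$, since all $m_i>-r$), so the ``Dirac sea level'' $-r-1$ is intrinsic to $S_1$, and likewise $-s-1$ to $S_2$. Hence $S_1=S_2$ forces $r=s$, after which the tails agree and $\{m_i\}=\{n_i\}$ gives $m_i=n_i$ for all $i$ --- exactly the condition $\delta_{m,n}=1$; since $\delta_{m,n}=0$ automatically when $r\neq s$, the two cases agree and the bracket equals $\delta_{m,n}$. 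I expect this last matching to be the only delicate point: one must rule out accidental coincidences of the two semi-infinite wedges when $r\neq s$, and the cleanest device is the sea-level characterization above, which simultaneously recovers $r=s$ and the equality of the finite parts.
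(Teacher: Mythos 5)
Your argument is correct, but it takes a genuinely different route from the paper's. The paper writes no proof at all: the statement is presented as an immediate corollary of Wick's theorem (Theorem \ref{thm:Wick}), the intended derivation being to expand $\bra{-r}$ and $\ket{-s}$ into fermion strings acting on $\bra{0}$ and $\ket{0}$ and to evaluate the resulting correlator by contractions --- which, as you note, requires either an ordering-free version of Wick's theorem or a preliminary reordering of the middle blocks that generates cross terms $\delta_{m_i,n_j}$ and an induction. You instead pass through the infinite-wedge model of Remark \ref{rem:added}: the adjointness $(\psi_i\ket{v},\ket{w})=(\ket{v},\psi^\ast_i\ket{w})$ turns the bracket into the pairing of the two semi-infinite wedges $v_{m_1}\wedge\cdots\wedge v_{m_r}\wedge v_{-r-1}\wedge\cdots$ and $v_{n_1}\wedge\cdots\wedge v_{n_s}\wedge v_{-s-1}\wedge\cdots$, each of which is literally a basis vector of the form \eqref{eq:basis} with coefficient $+1$ because the hypotheses $m_1>\cdots>m_r>-r$ and $n_1>\cdots>n_s>-s$ keep the indices strictly decreasing, and orthonormality finishes the computation; your ``sea-level'' observation correctly handles the only delicate point, that the two index sets can coincide only if $r=s$. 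What your approach buys is a sign-free, contraction-free argument with no case analysis over pairings; what it costs is reliance on the explicit realization of $\mathcal{F}$ rather than on the purely algebraic relations \eqref{eq:free-fermions-relation} and Wick calculus that the rest of the paper works with. Both are valid, and yours is arguably the cleaner proof of this particular identity.
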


\subsection{The boson-fermion correspondence}\label{sec:boson-fermion}

Let $:\bullet :$ be the \textit{normal ordering} (see \cite[\S 2]{alexandrov2013free}, \cite[\S 5.2]{miwa2012solitons}) of free-fermions defined as follows: all annihilation operators ($\psi_m$ $(m<0)$ and $\psi^\ast_n$ $(n\geq 0)$) are moved to the right, and an appropriate sign factor $(\pm 1)$ is multiplied. 
For example, we have $:\psi_1\psi^\ast_1:=\psi_1\psi^\ast_1$ and $:\psi^\ast_1\psi_1:=-\psi_1\psi^\ast_1$.

For $m\in \ZZ$, we define an operator $a_m$ as $a_m=\sum_{k\in \ZZ} :\psi_k\psi^\ast_{k+m}:$ on $\mathcal{F}$.
The operator $a_m$ satisfies the following commutative relations
\begin{equation}\label{eq:relation_added}
[a_m,a_n]=m\delta_{m+n,0},\qquad
[a_m,\psi_n]=\psi_{n-m},\qquad
[a_m,\psi^\ast_n]=-\psi^\ast_{n+m},
\end{equation}
where $[A,B]=AB-BA$.
For proofs of these facts, see \cite[\S 5.3]{miwa2012solitons}.
We also note that, if $\ket{v}\leftrightarrow \bra{w}$ under the involution $\omega$, we have $a_n\ket{v}\leftrightarrow\bra{w}a_{-n}$.


Let $x_1,x_2,\dots$ be formal independent variables.
We set
\[
H(x)=\sum_{n>0}\frac{p_n(x)}{n}a_n,\qquad p_n(x)=x_1^n+x_2^n+\cdots\mbox{,\ (the $n$-th power sum)},
\]
which satisfies the commutative relation
\begin{equation}\label{eq:exp_H_psi}
e^{H(x)}\psi_n=\left(\sum_{i=0}^\infty h_i(x)\psi_{n-i}\right)e^{H(x)},
\end{equation}
where $h_i(x)$ ($i\in \ZZ_{\geq 0}$) is the $i$-th complete symmetric function.

\begin{thm}[{\cite[Lemma 9.5]{miwa2012solitons}} (also see {\cite[Theorem 6.1]{kac2013bombay}})]\label{thm:Schur}
Let $\lambda=(\lambda_1\geq \dots\geq \lambda_\ell> 0)$ be a partition of length $\ell$.
We set $\lambda_{\ell+1}=\lambda_{\ell+2}=\dots=\lambda_{r}=0$ for $r\geq \ell$.
Then we have
\[
s_\lambda(x)=
\bra{0}e^{H(x)}
\psi_{\lambda_1-1}\psi_{\lambda_2-2}\cdots\psi_{\lambda_r-r}\ket{-r}
=
\det(h_{\lambda_i-i+j}(x))_{1\leq i,j\leq r}
,
\]
where $s_\lambda(x)$ is the Schur function.
\end{thm}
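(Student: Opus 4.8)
The plan is to establish the determinantal equality
$\bra{0}e^{H(x)}\psi_{\lambda_1-1}\cdots\psi_{\lambda_r-r}\ket{-r}=\det(h_{\lambda_i-i+j}(x))_{1\le i,j\le r}$
by a direct free-fermionic computation, and then to identify the right-hand side with $s_\lambda(x)$ via the classical Jacobi--Trudi identity $s_\lambda(x)=\det(h_{\lambda_i-i+j}(x))$. Throughout I write $\mu_j=\lambda_j-j$, so that $\mu_1>\mu_2>\dots>\mu_r\ge -r$ because $\lambda_r\ge 0$.

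First I would commute $e^{H(x)}$ to the right past every fermion. Applying relation \eqref{eq:exp_H_psi}, namely $e^{H(x)}\psi_n=\bigl(\sum_{i\ge 0}h_i(x)\psi_{n-i}\bigr)e^{H(x)}$, once for each factor yields $e^{H(x)}\psi_{\mu_1}\cdots\psi_{\mu_r}=\tilde{\psi}_{\mu_1}\cdots\tilde{\psi}_{\mu_r}\,e^{H(x)}$, where $\tilde{\psi}_n:=\sum_{i\ge 0}h_i(x)\psi_{n-i}$. The second ingredient is $e^{H(x)}\ket{-r}=\ket{-r}$. Since $H(x)$ is a linear combination of the $a_n$ with $n>0$, this reduces to showing $a_n\ket{-r}=0$ for $n>0$. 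Starting from the standard fact $a_n\ket{0}=0$ ($n>0$) and commuting $a_n$ rightward through $\ket{-r}=\psi^\ast_{-r}\cdots\psi^\ast_{-1}\ket{0}$ by means of $[a_n,\psi^\ast_m]=-\psi^\ast_{n+m}$ from \eqref{eq:relation_added}, each resulting correction term replaces some $\psi^\ast_{-k}$ by $\psi^\ast_{n-k}$; for $n>0$ this index either coincides with a surviving $\psi^\ast$ index (producing a repeated factor, hence $0$) or satisfies $n-k\ge 0$ (so the term ends in $\psi^\ast_{\ge 0}\ket{0}=0$), and the whole expression vanishes.

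After these two reductions the quantity of interest is $\bra{0}\tilde{\psi}_{\mu_1}\cdots\tilde{\psi}_{\mu_r}\ket{-r}=\bra{0}\tilde{\psi}_{\mu_1}\cdots\tilde{\psi}_{\mu_r}\psi^\ast_{-r}\cdots\psi^\ast_{-1}\ket{0}$. Expanding each $\tilde{\psi}$ into its linear combination of the $\psi$'s and applying Wick's theorem (Theorem \ref{thm:Wick}) with $n_j=-j$, the expectation factors as $\det(\langle\tilde{\psi}_{\mu_i}\psi^\ast_{-j}\rangle)_{1\le i,j\le r}$. For the two-point functions, the anti-commutation relations \eqref{eq:free-fermions-relation} together with the vacuum conditions give the propagator $\langle\psi_m\psi^\ast_n\rangle=\delta_{m,n}$ when $m<0$ and $0$ otherwise. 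Hence $\langle\tilde{\psi}_{\mu_i}\psi^\ast_{-j}\rangle=\sum_{k\ge 0}h_k(x)\langle\psi_{\mu_i-k}\psi^\ast_{-j}\rangle=h_{\mu_i+j}(x)=h_{\lambda_i-i+j}(x)$, the surviving index $k=\mu_i+j$ being forced by $\delta_{\mu_i-k,-j}$ and the constraint $k\ge 0$ matching the convention $h_{<0}=0$. This produces $\det(h_{\lambda_i-i+j}(x))_{1\le i,j\le r}$, and Jacobi--Trudi completes the identification with $s_\lambda(x)$.

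The step I expect to be the main obstacle is the rigorous handling of the infinite series: $\tilde{\psi}_n$ is an infinite sum of fermions, so one must justify commuting $e^{H(x)}$ through the product and applying Wick's theorem term by term. This is ultimately harmless because pairing against $\bra{0}$ and $\ket{-r}$ leaves only finitely many nonzero contributions in each matrix entry, but it is the genuinely delicate point; the verification that the positive bosonic modes annihilate the shifted vacuum, giving $e^{H(x)}\ket{-r}=\ket{-r}$, is the other place where some care with the commutation relations is required.
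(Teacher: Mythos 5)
Your proof is correct. The paper does not prove this theorem --- it quotes it from the cited references --- but your argument (commuting $e^{H(x)}$ rightward through the fermion string, verifying $e^{H(x)}\ket{-r}=\ket{-r}$ via $a_n\ket{-r}=0$ for $n>0$, applying Wick's theorem to obtain the propagators $\langle\tilde{\psi}_{\lambda_i-i}\psi^\ast_{-j}\rangle=h_{\lambda_i-i+j}(x)$, and finishing with Jacobi--Trudi) is the standard derivation and is precisely the specialization, with all $e^{\Theta}$ factors removed, of the generating-function computation the paper itself carries out in the proof of Proposition \ref{prop:G_and_Gr}.
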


\subsection{Operators $e^\Theta$ and $e^\theta$}\label{sec:etheta_eTheta}

Let $\psi(z)$ and $\psi^\ast(z)$ be the generating functions of $\psi_n$ and $\psi^\ast_n$ with a formal variable $z$:
\[
\psi(z)=\sum_{n\in \ZZ}\psi_nz^n,\qquad
\psi^\ast(z)=\sum_{n\in \ZZ}\psi^\ast_nz^n.
\]
Note that $[a_n,\psi(z)]=z^n\psi(z)$ and $[a_n,\psi^\ast(z)]=-z^{-n}\psi^\ast(z)$.

We now introduce two important operators
\[
\Theta=\beta a_{-1}-\frac{\beta^2}{2}a_{-2}+\frac{\beta^3}{3}a_{-3}-\cdots
\ \mbox{ and }\ 
\theta=\beta a_{1}-\frac{\beta^2}{2}a_{2}+\frac{\beta^3}{3}a_{3}-\cdots.
\]
Since
\[
e^X Ye^{-X}=
Y+\mathrm{ad}_X\cdot Y+\frac{(\mathrm{ad}_X)^2}{2{\color{red}!}}\cdot Y+\cdots,
\]
($\mathrm{ad}_X\cdot Y=[X,Y]$), we have the following equations
\begin{equation}\label{eq:e_theta_action}
e^{\Theta}\psi(z)e^{-\Theta}=(1+\beta z^{-1})\psi(z),\qquad
e^{\theta}\psi(z)e^{-\theta}=(1+\beta z)\psi(z).
\end{equation}

We give a list of lemmas that are useful in the following sections.

\begin{lemma}\label{lemma:e_theta_action_elem}
$e^\Theta \psi_ne^{-\Theta}=
\psi_n+\beta \psi_{n+1}
$,
$e^\theta \psi_ne^{-\theta}=
\psi_n+\beta \psi_{n-1}
$.
\end{lemma}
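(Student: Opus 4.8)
The plan is to read off both elementwise identities directly from the generating-function formulas \eqref{eq:e_theta_action}, by comparing the coefficient of $z^n$ on each side. The key observation is that conjugation $X\mapsto e^{\Theta}Xe^{-\Theta}$ is a $k$-algebra automorphism of $\mathcal{A}$ (its inverse is conjugation by $e^{-\Theta}$), hence in particular $k$-linear, so it passes term by term through the formal sum defining $\psi(z)$. Thus
\[
e^{\Theta}\psi(z)e^{-\Theta}=\sum_{n\in\ZZ}\bigl(e^{\Theta}\psi_ne^{-\Theta}\bigr)z^n,
\]
and the coefficient of $z^n$ on the left-hand side of the first identity in \eqref{eq:e_theta_action} is precisely $e^{\Theta}\psi_ne^{-\Theta}$.

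For the right-hand side I would expand
\[
(1+\beta z^{-1})\psi(z)=\sum_{n\in\ZZ}\psi_nz^n+\beta\sum_{n\in\ZZ}\psi_nz^{n-1},
\]
and then reindex the second sum by $n\mapsto n+1$ to collect everything into a single series $\sum_{n\in\ZZ}(\psi_n+\beta\psi_{n+1})z^n$. Equating the coefficients of $z^n$ with those computed above yields $e^{\Theta}\psi_ne^{-\Theta}=\psi_n+\beta\psi_{n+1}$, which is the first claimed formula.

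The second formula follows by the identical argument applied to $\theta$: since $(1+\beta z)\psi(z)=\sum_{n\in\ZZ}\psi_nz^n+\beta\sum_{n\in\ZZ}\psi_nz^{n+1}$, reindexing the second sum by $n\mapsto n-1$ gives $\sum_{n\in\ZZ}(\psi_n+\beta\psi_{n-1})z^n$, so comparing coefficients of $z^n$ in the second identity of \eqref{eq:e_theta_action} produces $e^{\theta}\psi_ne^{-\theta}=\psi_n+\beta\psi_{n-1}$.

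There is essentially no serious obstacle here; the entire content is already packaged in \eqref{eq:e_theta_action}, which I may assume. The only point requiring a word of care is the legitimacy of extracting coefficients: one should note that \eqref{eq:e_theta_action} is an identity of formal Laurent series in $z$ whose coefficients lie in $\mathcal{A}$ (or a suitable completion thereof), so that equating coefficients of like powers of $z$ is justified and the shift in summation index is purely formal.
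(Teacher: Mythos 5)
Your proof is correct and matches the paper's intent: the paper simply states that Lemmas 2.3--2.5 "can be shown from (2.5)--(2.6) by straightforward calculations," and extracting the coefficient of $z^n$ from the generating-function identity \eqref{eq:e_theta_action} is exactly that calculation. The reindexing and the remark about formal Laurent series with coefficients in $\mathcal{A}$ are both sound.
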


\begin{lemma}
$
e^{H(x)}e^{\Theta}=
\prod_{i=1}^\infty
(1+\beta x_i)
\cdot
e^{\Theta}e^{H(x)}
$.
\end{lemma}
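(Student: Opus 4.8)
The plan is to exploit the fact that both $H(x)=\sum_{n>0}\frac{p_n(x)}{n}a_n$ and $\Theta=\sum_{n>0}\frac{(-1)^{n-1}\beta^n}{n}a_{-n}$ are linear combinations of the bosonic operators $a_m$, which span a Heisenberg algebra: by \eqref{eq:relation_added} the bracket $[a_m,a_n]=m\delta_{m+n,0}$ is always a \emph{scalar}. Consequently $[H(x),\Theta]$ will be a central element (a scalar in the completed variables $x_i$), and the whole identity should collapse to a Baker--Campbell--Hausdorff computation truncated at first order. So I would first record the elementary identity that holds whenever $c:=[A,B]$ commutes with $A$ and $B$: since $\mathrm{ad}_A^2\cdot B=[A,c]=0$, the series for $e^A B e^{-A}$ terminates to give $e^A B e^{-A}=B+c$, whence $e^A e^B e^{-A}=e^{B+c}=e^c e^B$ and therefore $e^A e^B=e^{c}\,e^B e^A$.

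The key step is then to evaluate $c=[H(x),\Theta]$. Using bilinearity and $[a_m,a_{-n}]=m\delta_{m,n}$, only the diagonal terms $m=n$ survive, and I expect
\[
[H(x),\Theta]=\sum_{m,n>0}\frac{p_m(x)}{m}\cdot\frac{(-1)^{n-1}\beta^n}{n}[a_m,a_{-n}]
=\sum_{n>0}\frac{(-1)^{n-1}\beta^n}{n}\,p_n(x).
\]
Writing $p_n(x)=\sum_i x_i^n$ and summing the logarithmic series $\sum_{n>0}\frac{(-1)^{n-1}t^n}{n}=\log(1+t)$ in each variable $t=\beta x_i$, this becomes
\[
[H(x),\Theta]=\sum_i \log(1+\beta x_i)=\log\prod_{i=1}^\infty(1+\beta x_i),
\]
which is indeed a scalar (an element of the coefficient ring, independent of the fermions).

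Feeding $A=H(x)$, $B=\Theta$ into the identity from the first paragraph then yields
\[
e^{H(x)}e^{\Theta}=e^{[H(x),\Theta]}\,e^{\Theta}e^{H(x)}=\prod_{i=1}^\infty(1+\beta x_i)\cdot e^{\Theta}e^{H(x)},
\]
as claimed. The only real subtlety, and the point I would be most careful about, is justifying that $[H(x),\Theta]$ is genuinely central so that the BCH series truncates: this rests entirely on the fact that $[a_m,a_{-n}]$ is a scalar, after which nothing commutes nontrivially with the resulting constant. I would also remark briefly that all the infinite sums and the operator exponentials are well defined on the Fock space $\mathcal{F}$ (on any fixed vector only finitely many summands act nontrivially) and that the identity is read as an identity of formal power series in the variables $x_1,x_2,\dots$, so that the infinite product $\prod_i(1+\beta x_i)$ and the logarithmic expansion cause no analytic difficulty.
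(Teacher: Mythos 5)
Your proof is correct, and since the paper omits the argument entirely (it only remarks that Lemmas 2.4--2.6 follow ``by straightforward calculations''), your Baker--Campbell--Hausdorff computation is exactly the standard calculation being alluded to: the only input needed is the Heisenberg relation $[a_m,a_{-n}]=m\delta_{m,n}$ from \eqref{eq:relation_added}, which makes $[H(x),\Theta]=\sum_{n>0}\frac{(-1)^{n-1}\beta^n}{n}p_n(x)=\log\prod_{i}(1+\beta x_i)$ central, so the series truncates and $e^{H(x)}e^{\Theta}=e^{[H(x),\Theta]}e^{\Theta}e^{H(x)}$. Your closing remarks on well-definedness (only finitely many $a_n$ act nontrivially on a fixed Fock vector, and the identity is one of formal power series in $x_1,x_2,\dots$) address the only genuine subtlety.
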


\begin{lemma}\label{lemma:e_theta_action_elem_end}
$
e^{H(x)}\psi(z)=
\left(
\sum_{i=0}^{\infty}h_i(x)z^i
\right)
\psi(z)e^{H(x)}
$.
\end{lemma}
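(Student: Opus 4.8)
The plan is to compute the conjugation $e^{H(x)}\psi(z)e^{-H(x)}$ via the adjoint-action (Baker--Campbell--Hausdorff) expansion already recorded in the excerpt, and then multiply on the right by $e^{H(x)}$. First I would evaluate the single commutator. Since $H(x)=\sum_{n>0}\frac{p_n(x)}{n}a_n$ and $[a_n,\psi(z)]=z^n\psi(z)$, linearity gives
\[
[H(x),\psi(z)]=\Big(\sum_{n>0}\frac{p_n(x)}{n}z^n\Big)\psi(z)=c\,\psi(z),
\]
where $c:=\sum_{n>0}\frac{p_n(x)}{n}z^n$ is a scalar (a formal power series in the $x_i$ and $z$) that commutes with every fermionic operator.

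The key observation is that this commutator is \emph{central}: it is just $\psi(z)$ rescaled by the scalar $c$. Consequently the iterated commutators telescope, $(\mathrm{ad}_{H(x)})^k\psi(z)=c^k\psi(z)$, and the BCH series collapses to a plain exponential,
\[
e^{H(x)}\psi(z)e^{-H(x)}=\sum_{k\geq 0}\frac{c^k}{k!}\psi(z)=e^{c}\,\psi(z).
\]
This is the step that makes the whole argument short; the only thing to check is that no ordering issues arise, which is guaranteed precisely because $c$ is a scalar rather than an operator. I expect this centrality to be the crux, modest as it is.

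Finally I would identify $e^{c}$ with the generating function of the complete symmetric functions. By the standard identity relating power sums and complete homogeneous symmetric functions,
\[
\exp\Big(\sum_{n>0}\frac{p_n(x)}{n}z^n\Big)=\prod_{j\geq 1}\frac{1}{1-x_jz}=\sum_{i\geq 0}h_i(x)z^i,
\]
so that $e^{c}=\sum_{i\geq 0}h_i(x)z^i$. Multiplying $e^{H(x)}\psi(z)e^{-H(x)}=\big(\sum_{i\geq 0}h_i(x)z^i\big)\psi(z)$ on the right by $e^{H(x)}$ yields the claim. The computation is entirely formal, taking place in the appropriate completion of $\mathcal{A}$, so no analytic convergence is needed. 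As a consistency check, extracting the coefficient of $z^n$ on both sides recovers exactly the relation \eqref{eq:exp_H_psi} stated earlier; thus the lemma may equivalently be regarded as the generating-function repackaging of \eqref{eq:exp_H_psi}, obtained by multiplying that relation by $z^n$ and summing over $n$.
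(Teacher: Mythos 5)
Your proof is correct and is essentially the ``straightforward calculation'' the paper has in mind: it conjugates $\psi(z)$ by $e^{H(x)}$ using the adjoint expansion together with $[a_n,\psi(z)]=z^n\psi(z)$, and identifies $\exp\bigl(\sum_{n>0}\tfrac{p_n(x)}{n}z^n\bigr)$ with $\sum_{i\ge 0}h_i(x)z^i$ via the standard power-sum identity. Your closing remark that the lemma is the generating-function form of \eqref{eq:exp_H_psi} is exactly the paper's point of view, so nothing further is needed.
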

Lemmas \ref{lemma:e_theta_action_elem}--\ref{lemma:e_theta_action_elem_end} can be shown from  (\ref{eq:exp_H_psi}--\ref{eq:e_theta_action}) by straightforward calculations.

\begin{lemma}\label{lemma:new_lemma_added}
For $m\in \ZZ$ and $s\geq 1$, we have
\[
\psi_{m-1}e^{\Theta}
\psi_{m-2}e^{\Theta}
\cdots
\psi_{m-s}e^{\Theta}
=
\psi_{m-1}
\psi_{m-2}
\cdots
\psi_{m-s}e^{s\Theta}.
\]
\end{lemma}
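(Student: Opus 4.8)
The plan is to argue by induction on $s$, using Lemma \ref{lemma:e_theta_action_elem} to transport the factors $e^{\Theta}$ to the right one block at a time, and then to absorb the resulting correction terms by means of the Pauli relation $\psi_n^2=0$ coming from \eqref{eq:free-fermions-relation}. For the base case $s=1$ both sides equal $\psi_{m-1}e^{\Theta}$, so there is nothing to prove.

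For the inductive step I would first apply the inductive hypothesis to the trailing block $\psi_{m-2}e^{\Theta}\cdots\psi_{m-s}e^{\Theta}$, which is the case of the lemma with $m$ replaced by $m-1$ and $s$ by $s-1$, rewriting it as $\psi_{m-2}\psi_{m-3}\cdots\psi_{m-s}\,e^{(s-1)\Theta}$. This leaves a single $e^{\Theta}$ sitting between $\psi_{m-1}$ and the block of bare fermions. The next step is to slide that single $e^{\Theta}$ to the far right past $\psi_{m-2}\psi_{m-3}\cdots\psi_{m-s}$. Since conjugation by $e^{\Theta}$ is an algebra homomorphism, Lemma \ref{lemma:e_theta_action_elem} gives
\[
e^{\Theta}\,\psi_{m-2}\psi_{m-3}\cdots\psi_{m-s}
=\Bigl[\prod_{i=2}^{s}(\psi_{m-i}+\beta\psi_{m-i+1})\Bigr]e^{\Theta},
\]
so that, after combining $e^{\Theta}e^{(s-1)\Theta}=e^{s\Theta}$, the left-hand side becomes $\psi_{m-1}\bigl[\prod_{i=2}^{s}(\psi_{m-i}+\beta\psi_{m-i+1})\bigr]e^{s\Theta}$. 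It then remains to check the purely fermionic identity $\psi_{m-1}\prod_{i=2}^{s}(\psi_{m-i}+\beta\psi_{m-i+1})=\psi_{m-1}\psi_{m-2}\cdots\psi_{m-s}$.

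This last identity I would verify by expanding the product from left to right: having already collapsed the first $k-1$ factors to $\psi_{m-1}\psi_{m-2}\cdots\psi_{m-k}$, multiplication by the next factor $(\psi_{m-k-1}+\beta\psi_{m-k})$ produces the desired term $\psi_{m-1}\cdots\psi_{m-k-1}$ plus a $\beta$-term in which $\psi_{m-k}$ is repeated and hence vanishes by $\psi_{m-k}^2=0$.

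I expect this cancellation to be the crux of the argument. The point is that the correction index $m-i+1$ introduced by Lemma \ref{lemma:e_theta_action_elem} in the $i$-th factor always coincides with the index of the immediately preceding fermion, so every correction term is killed by the exclusion relation and only the leading product survives. The rest of the proof is bookkeeping: tracking that the conjugation homomorphism distributes over the product of $\psi$'s and that the accumulated exponentials add up to $e^{s\Theta}$.
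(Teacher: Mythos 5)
Your proposal is correct and follows essentially the same route as the paper: induction on $s$, applying the inductive hypothesis to the trailing block, conjugating the remaining $e^{\Theta}$ through the bare fermions via Lemma \ref{lemma:e_theta_action_elem}, and killing each $\beta$-correction term with $\psi_n^2=0$ since the shifted index always duplicates the adjacent preceding fermion. The left-to-right collapsing argument you spell out is exactly the cancellation the paper invokes in one line.
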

\begin{proof}
We prove the lemma by induction on $s\geq 1$.
If $s=1$, the equation is trivial.
For $s\geq 1$, we have by induction hypothesis
\begin{align*}
&\psi_{m-1}e^{\Theta}
\psi_{m-2}e^{\Theta}
\cdots
\psi_{m-s}e^{\Theta}
\psi_{m-s-1}e^{\Theta}\\
&=\psi_{m-1}e^{\Theta}(\psi_{m-2}\psi_{m-3}\dots \psi_{m-s-1}e^{s\Theta})\\
&=\psi_{m-1}(\psi_{m-2}+\beta \psi_{m-1})(\psi_{m-3}+\beta \psi_{m-2})\dots (\psi_{m-s-1}+\beta \psi_{m-s})e^{(s+1)\Theta}.
\end{align*}
Since $\psi_{m-1}^2=\psi_{m-2}^2=\dots=\psi_{m-s}^2=0$, the last expression can be rewritten as
\[
\psi_{m-1}\psi_{m-2}\psi_{m-3}\dots \psi_{m-s-1}e^{(s+1)\Theta}.
\]
\end{proof}

\begin{lemma}
For $m\in \ZZ$ and $s\geq 1$, we have
\[
\psi_{m-1}e^{\Theta}
\psi_{m-2}e^{\Theta}
\cdots
\psi_{m-s}e^{\Theta}
\psi_{m-s}e^{\Theta}
=
(-\beta)^{s}
\psi_{m}e^{\Theta}
\psi_{m-1}e^{\Theta}
\cdots
\psi_{m-s+1}e^{\Theta}
\psi_{m-s}e^{\Theta}.
\]
\end{lemma}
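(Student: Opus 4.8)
The plan is to reduce both sides to the common normal form $\psi_m\psi_{m-1}\cdots\psi_{m-s}\,e^{(s+1)\Theta}$, up to the scalar $(-\beta)^s$, by repeatedly passing the operators $e^{\Theta}$ to the right.

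First I would treat the left-hand side. Its first $s$ factors $\psi_{m-1}e^{\Theta}\psi_{m-2}e^{\Theta}\cdots\psi_{m-s}e^{\Theta}$ already match the shape of Lemma~\ref{lemma:new_lemma_added}, so applying that lemma collapses them to $\psi_{m-1}\psi_{m-2}\cdots\psi_{m-s}\,e^{s\Theta}$. The left-hand side then reads $\psi_{m-1}\cdots\psi_{m-s}\,e^{s\Theta}\psi_{m-s}e^{\Theta}$, and the only remaining task is to commute $e^{s\Theta}$ through the trailing $\psi_{m-s}$.

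Next I would compute that commutation. Iterating Lemma~\ref{lemma:e_theta_action_elem}, or equivalently reading off the coefficient of $z^{m-s}$ in the relation $e^{s\Theta}\psi(z)e^{-s\Theta}=(1+\beta z^{-1})^s\psi(z)$ obtained from \eqref{eq:e_theta_action}, gives $e^{s\Theta}\psi_{m-s}=\bigl(\sum_{k=0}^{s}\binom{s}{k}\beta^{k}\psi_{m-s+k}\bigr)e^{s\Theta}$. Inserting this to the right of $\psi_{m-1}\cdots\psi_{m-s}$, each term with $0\le k\le s-1$ reproduces a $\psi_{m-s+k}$ already present in the product and hence vanishes by $\psi_j^2=0$; only the top term $k=s$, carrying the factor $\beta^{s}\psi_{m}$, survives, since $m$ lies outside the range $\{m-1,\dots,m-s\}$. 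This Pauli-exclusion collapse is the crucial simplification and the step I expect to require the most care: one must check that exactly the lower-order terms are annihilated while the highest index $\psi_m$ is preserved.

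Finally I would reconcile the two sides. The surviving left-hand side equals $\beta^{s}\psi_{m-1}\cdots\psi_{m-s}\psi_m\,e^{(s+1)\Theta}$; moving $\psi_m$ past the $s$ factors $\psi_{m-1},\dots,\psi_{m-s}$ via the anticommutation relations \eqref{eq:free-fermions-relation} produces the sign $(-1)^{s}$, yielding $(-\beta)^{s}\psi_m\psi_{m-1}\cdots\psi_{m-s}\,e^{(s+1)\Theta}$. For the right-hand side, its $s+1$ factors $\psi_m e^{\Theta}\psi_{m-1}e^{\Theta}\cdots\psi_{m-s}e^{\Theta}$ have consecutive indices $m,m-1,\dots,m-s$, so Lemma~\ref{lemma:new_lemma_added}, applied with $m+1$ and $s+1$ in place of $m$ and $s$, collapses them to $\psi_m\psi_{m-1}\cdots\psi_{m-s}\,e^{(s+1)\Theta}$. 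Multiplying by the prefactor $(-\beta)^{s}$ reproduces the left-hand side exactly, which completes the proof.
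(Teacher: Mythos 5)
Your argument is correct, but it follows a genuinely different route from the paper's. The paper proves the identity by induction on $s$: the case $s=1$ is done by hand (writing $\psi_{m-1}e^{\Theta}=e^{\Theta}(\psi_{m-1}-\beta\psi_m+\beta^2\psi_{m+1}-\cdots)$ and killing the leading term with $\psi_{m-1}^2=0$), and the inductive step peels off one factor of $(-\beta)$ at a time by applying the base case to the adjacent repeated pair and then invoking the induction hypothesis. You instead reduce both sides to the common normal form $\psi_m\psi_{m-1}\cdots\psi_{m-s}\,e^{(s+1)\Theta}$ in one pass: Lemma~\ref{lemma:new_lemma_added} collapses the strictly decreasing strings on each side, the relation $e^{s\Theta}\psi(z)e^{-s\Theta}=(1+\beta z^{-1})^s\psi(z)$ from \eqref{eq:e_theta_action} gives $e^{s\Theta}\psi_{m-s}=\bigl(\sum_{k=0}^{s}\binom{s}{k}\beta^{k}\psi_{m-s+k}\bigr)e^{s\Theta}$, and the Pauli-exclusion step correctly kills every term except $k=s$ (for $0\le k\le s-1$ the index $m-s+k$ lies in $\{m-s,\dots,m-1\}$, all already present), after which the $(-1)^s$ from anticommuting $\psi_m$ to the front via \eqref{eq:free-fermions-relation} combines with $\beta^s$ to give $(-\beta)^s$. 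Each step checks out. Your version makes the origin of the factor $(-\beta)^s$ visible in a single binomial coefficient rather than accumulating it over $s$ induction steps, at the cost of leaning on Lemma~\ref{lemma:new_lemma_added} (which the paper's proof does not use here); the paper's induction is more self-contained but less transparent.
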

\begin{proof}
We prove the lemma by induction on $s$.
When $s=1$, it follows that
\begin{align*}
\psi_{m-1}e^{\Theta}
\psi_{m-1}e^{\Theta}
&=
e^{\Theta}(\psi_{m-1}-\beta \psi_{m}+\beta^2\psi_{m+1}-\cdots)
\psi_{m-1}e^{\Theta}\\
&=
e^{\Theta}(-\beta \psi_{m}+\beta^2\psi_{m+1}-\cdots)
\psi_{m-1}e^{\Theta}\\
&=(-\beta)
\psi_{m}e^{\Theta}
\psi_{m-1}e^{\Theta}
\end{align*}
from $\psi_{m-1}^2=0$.
For general $s$, we have
\begin{align*}
&
\psi_{m-1}e^{\Theta}
\cdots
\psi_{m-s-1}e^{\Theta}
\psi_{m-s}e^{\Theta}
\psi_{m-s}e^{\Theta}\\
&=
(-\beta)
\psi_{m-1}e^{\Theta}
\cdots
\psi_{m-s-1}e^{\Theta}
\psi_{m-s-1}e^{\Theta}
\psi_{m-s}e^{\Theta}\\
&=
(-\beta)^s
\psi_{m}e^{\Theta}
\cdots
\psi_{m-s-2}e^{\Theta}
\psi_{m-s-1}e^{\Theta}
\psi_{m-s}e^{\Theta}
\end{align*}
by induction hypothesis.
\end{proof}
\begin{cor}\label{cor:X(n)}
Set 
$
X(n):=
\psi_{n_1-1}e^{\Theta}
\psi_{n_2-2}e^{\Theta}
\cdots
\psi_{n_r-r}e^{\Theta}
$
for $n=(n_1,\dots,n_r)$.
Assume $n_1-1\geq n_2-2\geq \dots\geq n_r-r$.
Then the following equation holds:
\[
X(n)=(-\beta)^{\zet{\overline{n}}-\zet{n}}\cdot 
X(\overline{n}),
\]
where $\overline{n}_j=\max[n_j,n_{j+1},\dots,n_r]$, $\zet{n}=\sum_{j=1}^r n_j$, and $\zet{\overline{n}}=\sum_{j=1}^r \overline{n}_j$.
\end{cor}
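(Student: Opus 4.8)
The plan is to argue by induction on the nonnegative integer $d=\zet{\overline{n}}-\zet{n}$. First I would restate the hypothesis combinatorially: $n_1-1\geq\dots\geq n_r-r$ is equivalent to $n_{j+1}\leq n_j+1$ for all $j$, so in the word $X(n)$ the fermion indices $n_j-j$ are weakly decreasing, and two adjacent indices coincide exactly when $n_{j+1}=n_j+1$. Moreover $\overline{n}_j=\max[n_j,\dots,n_r]$ equals $n_j$ for every $j$ precisely when $n$ is weakly decreasing, i.e. when $n=\overline{n}$. Since each $\overline{n}_j\geq n_j$, the case $d=0$ forces $\overline{n}_j=n_j$ for all $j$, hence $n=\overline{n}$ and $X(n)=(-\beta)^0 X(\overline{n})$; this is the base case.

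For the inductive step, assume $d\geq 1$. Then $n\neq\overline{n}$, so $n$ is not weakly decreasing and there is a smallest index $j$ with $n_j<n_{j+1}$; by the hypothesis $n_{j+1}=n_j+1$. Write $v=n_j$ and let $p\leq j$ be the start of the maximal constant run $n_p=\dots=n_j=v$. By minimality of $j$ the sequence is weakly decreasing on $1,\dots,j$, so (when $p>1$) $n_{p-1}>v$. Set $s=j-p+1$ and $m=v-p+1$. Because $n_{j+1}-(j+1)=v-j=n_j-j$ supplies a repeated index, the contiguous sub-product of $X(n)$ occupying positions $p,\dots,j+1$ is exactly $\psi_{m-1}e^{\Theta}\cdots\psi_{m-s}e^{\Theta}\psi_{m-s}e^{\Theta}$. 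Applying the swap identity of the lemma immediately preceding this corollary to this sub-product, while leaving the operators on either side untouched, gives $X(n)=(-\beta)^s X(n')$, where $n'$ is obtained from $n$ by raising each of the $s$ entries $n_p,\dots,n_j$ from $v$ to $v+1$.

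It then remains to verify three things so that the induction closes. (i) $n'$ still satisfies the hypothesis: the only transitions that change are those meeting the raised run; at $(p-1,p)$ admissibility $n'_p=v+1\leq n_{p-1}+1$ holds precisely because $n_{p-1}>v$, while the interior transitions of the run and the transition at $(j,j+1)$ all become flat, hence admissible. (ii) $\zet{\overline{n'}}=\zet{\overline{n}}$: the raised entries are dominated in their own suffixes by the value $v+1$ already present at position $j+1$, so no suffix maximum changes and $\overline{n'}=\overline{n}$. (iii) $\zet{n'}=\zet{n}+s$, whence $\zet{\overline{n}}-\zet{n'}=d-s<d$. The inductive hypothesis yields $X(n')=(-\beta)^{\,d-s}X(\overline{n})$, and combining with $X(n)=(-\beta)^s X(n')$ produces $X(n)=(-\beta)^d X(\overline{n})$. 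The one genuinely delicate point, and the reason for choosing the \emph{leftmost} ascent rather than an arbitrary one, is step (i): an interior run whose preceding entry equals $v-1$ would be raised into a violation of the weak-decrease hypothesis, and only the leftmost choice guarantees $n_{p-1}>v$, so that the reduced word $n'$ remains admissible for the induction.
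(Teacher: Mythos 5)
Your proof is correct and follows essentially the route the paper intends: the corollary is stated without an explicit proof as an immediate consequence of the preceding lemma, and your induction on $\vert\overline{n}\vert-\vert n\vert$, applying that lemma to the contiguous block at the leftmost ascent, is exactly the iteration the paper leaves implicit. Your remark that one must choose the \emph{leftmost} ascent (so that $n_{p-1}>v$ and the modified sequence $n'$ still satisfies $n'_1-1\geq\cdots\geq n'_r-r$) is a genuine point of care that the paper glosses over.
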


Similarly, we have:
\begin{lemma}
For $m\in \ZZ$, it follows that
\[
\psi_me^{-\theta}
\psi_m
=(-\beta)
\psi_me^{-\theta}
\psi_{m-1}.
\]
\end{lemma}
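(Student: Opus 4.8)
The plan is to mirror the computation used for the analogous $e^{\Theta}$ lemma just above, except that here I would push the single factor $e^{-\theta}$ to the \emph{right} through the second $\psi_m$ rather than pulling $e^{\Theta}$ to the left. The only ingredient needed is the conjugation rule for $\theta$, which comes directly from the second identity in \eqref{eq:e_theta_action}.

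First I would record that identity in componentwise form. Expanding $(1+\beta z)^{-1}=\sum_{k\geq 0}(-\beta z)^{k}$ in $e^{-\theta}\psi(z)e^{\theta}=(1+\beta z)^{-1}\psi(z)$ and reading off the coefficient of $z^{n}$ gives
\[
e^{-\theta}\psi_n e^{\theta}=\sum_{k\geq 0}(-\beta)^k\psi_{n-k}=\psi_n-\beta\psi_{n-1}+\beta^2\psi_{n-2}-\cdots,
\]
or equivalently $e^{-\theta}\psi_n=\bigl(\psi_n-\beta\psi_{n-1}+\beta^2\psi_{n-2}-\cdots\bigr)e^{-\theta}$. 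This is the exact analogue of the series $e^{-\Theta}\psi_{m-1}e^{\Theta}=\psi_{m-1}-\beta\psi_m+\beta^2\psi_{m+1}-\cdots$ that drives the preceding proof.

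Next I would apply this rule to the rightmost $\psi_m$ in the left-hand side, obtaining
\[
\psi_m e^{-\theta}\psi_m=\psi_m\bigl(\psi_m-\beta\psi_{m-1}+\beta^2\psi_{m-2}-\cdots\bigr)e^{-\theta}.
\]
The leading term $\psi_m\psi_m$ drops out because $\psi_m^2=0$ by \eqref{eq:free-fermions-relation}, so
\[
\psi_m e^{-\theta}\psi_m=(-\beta)\,\psi_m\bigl(\psi_{m-1}-\beta\psi_{m-2}+\beta^2\psi_{m-3}-\cdots\bigr)e^{-\theta}.
\]
Finally I would recognise the bracketed series as $e^{-\theta}\psi_{m-1}$, again by the conjugation rule with $n=m-1$, which yields $\psi_m e^{-\theta}\psi_m=(-\beta)\,\psi_m e^{-\theta}\psi_{m-1}$, as claimed.

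There is no real obstacle in the algebra; the one point I would be careful to state is that all of these are identities in the ring of operator-valued formal power series in $\beta$ (the setting in which $\theta$ and $e^{\pm\theta}$ are defined), so that at each fixed order in $\beta$ only finitely many of the $\psi_{n-k}$ contribute and the rearrangement of the infinite series is legitimate — exactly as in Lemma \ref{lemma:e_theta_action_elem} and the preceding lemmas.
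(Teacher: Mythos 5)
Your proof is correct and essentially identical to the paper's: both expand $e^{-\theta}\psi_m$ as $(\psi_m-\beta\psi_{m-1}+\beta^2\psi_{m-2}-\cdots)e^{-\theta}$ using \eqref{eq:e_theta_action}, kill the leading term with $\psi_m^2=0$, and reassemble the remaining series as $(-\beta)\,e^{-\theta}\psi_{m-1}$. (Your indices are in fact the correct ones --- the paper's printed $\psi_{m+2}$ is a typo for $\psi_{m-2}$.)
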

\begin{proof}
Since $\psi_m^2=0$, we have
$
\psi_me^{-\theta}\psi_m
=
\psi_m(\psi_m-\beta \psi_{m-1}+\beta^2\psi_{m+2}-\cdots)e^{-\theta}
=
\psi_m(-\beta \psi_{m-1}+\beta^2\psi_{m+2}-\cdots)e^{-\theta}
=
(-\beta)\psi_me^{-\theta}\psi_{m-1}
$.
\end{proof}
\begin{cor}\label{cor:x(n)}
Set 
$
x(n):=
\psi_{n_1-1}e^{-\theta}
\psi_{n_2-2}e^{-\theta}
\cdots
\psi_{n_r-r}e^{-\theta}
$ for $n=(n_1,\dots,n_r)$.
Assume $n_1-1\geq n_2-2\geq \dots\geq n_r-r$.
Then the following equation holds:
\[
x(n)=(-\beta)^{\zet{n}-\zet{\underline{n}}}\cdot 
x(\underline{n}),
\]
where $\underline{n}_j=\min[n_1,\dots,n_j]$.
\end{cor}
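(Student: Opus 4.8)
The plan is to induct on the nonnegative integer $d(n):=\zet{n}-\zet{\underline{n}}=\sum_{j=1}^r(n_j-\underline{n}_j)$, using the single identity $\psi_m e^{-\theta}\psi_m=(-\beta)\psi_m e^{-\theta}\psi_{m-1}$ of the preceding lemma as the only move. Writing $a_j:=n_j-j$ for the subscript of the $j$-th fermion, the hypothesis reads $a_1\ge a_2\ge\cdots\ge a_r$. I first observe that $d(n)=0$ holds exactly when $n$ is weakly decreasing, equivalently when the $a_j$ are \emph{strictly} decreasing; in that case $n=\underline{n}$ and the identity is trivial. This is the base case.

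For the inductive step, suppose $d(n)>0$. Then the weakly decreasing sequence $a_1\ge\cdots\ge a_r$ is not strictly decreasing, so it has at least one adjacent coincidence $a_j=a_{j+1}$, i.e.\ $n_{j+1}=n_j+1$. The key is to locate such a $j$ at the right end of a maximal constant run: I choose $j$ so that $a_j=a_{j+1}=m$ while either $j+1=r$ or $a_{j+1}>a_{j+2}$. Inside the product $x(n)$ the factor $\psi_m e^{-\theta}\psi_m$ then sits in positions $j,j+1$, and the lemma rewrites it as $(-\beta)\psi_m e^{-\theta}\psi_{m-1}$, yielding $x(n)=(-\beta)\,x(n')$, where $n'$ agrees with $n$ except that $n'_{j+1}=n_{j+1}-1$.

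The crux is to verify that $n'$ is again admissible, that its running minimum is unchanged, and that its defect drops by one. For admissibility, $a'_{j+1}=m-1$ satisfies $a'_j=m\ge m-1$, and by the choice of $j$ at the end of a run we have $m-1\ge a_{j+2}=a'_{j+2}$, so $a'$ is still weakly decreasing. For the running minimum, $n'$ differs from $n$ only at position $j+1$, where $n'_{j+1}=n_j\ge\underline{n}_j$; since $\underline{n}_{j+1}=\min(\underline{n}_j,n_j+1)=\underline{n}_j$ as well, the running minima at position $j+1$ agree, and as the later entries of $n$ are untouched one gets $\underline{n'}=\underline{n}$ termwise. Hence $d(n')=\zet{n'}-\zet{\underline{n'}}=(\zet{n}-1)-\zet{\underline{n}}=d(n)-1$. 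By the induction hypothesis $x(n')=(-\beta)^{d(n')}x(\underline{n'})=(-\beta)^{d(n)-1}x(\underline{n})$, and multiplying by the factor $(-\beta)$ produced above gives $x(n)=(-\beta)^{d(n)}x(\underline{n})$, which is the claim.

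The one genuine obstacle is the placement of the move: applying the identity at an interior position of a constant run would push $a_{j+1}$ below $a_{j+2}$ and destroy the weakly decreasing hypothesis needed to iterate. Selecting the rightmost coincidence of each run is exactly what keeps the sequence admissible while still lowering the defect by one, so the induction closes; everything else is the bookkeeping for $\underline{n}$ recorded above.
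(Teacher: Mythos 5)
Your argument is correct and is precisely the iteration of the identity $\psi_m e^{-\theta}\psi_m=(-\beta)\,\psi_m e^{-\theta}\psi_{m-1}$ that the paper leaves implicit, since the corollary is stated there without proof. The one delicate point---applying the move only at the right end of a constant run, so that the sequence $n_j-j$ stays weakly decreasing and the running minima $\underline{n}$ are unchanged---is handled correctly, so nothing is missing.
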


\section{Stable Grothendieck polynomial $G_\lambda(x)$}\label{sec:Gr}

\subsection{Definition of $G^r_\lambda(x)$}\label{sec:def_of_Gr}

Let $\lambda=(\lambda_1\geq \dots\geq \lambda_\ell>0)$ be a partition.
For $r\geq \ell$, we put $\lambda_{\ell+1}=\lambda_{\ell+2}=\dots=\lambda_{r}=0$.
Let $G^r_\lambda(x)$ denote the symmetric function that is defined by
\[
G^r_\lambda(x):=
\bra{0}
e^{H(x)}
\psi_{\lambda_1-1}e^{\Theta}
\psi_{\lambda_2-2}e^{\Theta}
\cdots
\psi_{\lambda_r-r}e^{\Theta}
\cdot e^{-r\Theta}
\ket{-r}.
\]
This expression is rewritten as
\[
G^r_\lambda(x)=
\bra{0}
e^{H(x)}
\psi_{\lambda_1-1}e^{\Theta}
\psi_{\lambda_2-2}e^{\Theta}
\cdots
\psi_{\lambda_\ell-\ell}e^{\Theta}
\cdot
\psi_{-\ell-1}
\cdots
\psi_{-r}
\cdot e^{-\ell\Theta}
\ket{-r}
\]
by using Lemma \ref{lemma:new_lemma_added}.

\begin{prop}\label{prop:G_and_Gr}
If $\ell(\lambda)\leq n\leq r$, we have
\[
G_\lambda(x_1,\dots,x_n)=
G^r_\lambda(x_1,\dots,x_n,0,0,\dots).
\]
\end{prop}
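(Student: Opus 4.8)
The plan is to evaluate the vacuum expectation value defining $G^r_\lambda(x)$ explicitly, turning it into a determinant of size $r$, and then to show that the specialization $x_{n+1}=x_{n+2}=\dots=0$ makes this determinant collapse onto the size-$n$ Jacobi--Trudi determinant \eqref{eq:Jacobi-Trudi}, which is $G_\lambda(x_1,\dots,x_n)$ by definition.

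\textbf{Step 1 (reduction to a size-$r$ determinant).} First I would disentangle the operators $e^{\Theta}$. Inserting $e^{-(j-1)\Theta}e^{(j-1)\Theta}$ between successive factors, the operator part $\psi_{\lambda_1-1}e^{\Theta}\cdots\psi_{\lambda_r-r}e^{\Theta}\cdot e^{-r\Theta}$ rewrites as $\prod_{j=1}^{r}\bigl(e^{(j-1)\Theta}\psi_{\lambda_j-j}e^{-(j-1)\Theta}\bigr)$, the trailing $e^{r\Theta}$ cancelling the factor $e^{-r\Theta}$. Iterating Lemma \ref{lemma:e_theta_action_elem} gives $e^{(j-1)\Theta}\psi_n e^{-(j-1)\Theta}=\sum_{m\ge 0}\binom{j-1}{m}\beta^{m}\psi_{n+m}$. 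Substituting $n=\lambda_j-j$, expanding, and applying the determinantal evaluation underlying Theorem \ref{thm:Schur} in its general form $\bra{0}e^{H(x)}\psi_{a_1}\cdots\psi_{a_r}\ket{-r}=\det(h_{a_i+j}(x))_{1\le i,j\le r}$ (valid for arbitrary integers $a_i$, with the right-hand side vanishing automatically when two $a_i$ coincide, in agreement with $\psi_a^2=0$), followed by multilinearity in the rows, yields
\[
G^r_\lambda(x)=\det\Bigl(\sum_{m\ge 0}\binom{i-1}{m}\beta^{m}h_{\lambda_i-i+j+m}(x)\Bigr)_{1\le i,j\le r}.
\]
This is precisely \eqref{eq:Jacobi-Trudi}, but of size $r$ instead of $n$.

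\textbf{Step 2 (collapse of the determinant).} This is the crux, and the only place where the finiteness of the variable set enters --- consistent with the remark in the introduction that the sequence $G^r_\lambda(x)$ is not stable in $r$. Setting $x_{n+1}=x_{n+2}=\dots=0$, write $M_{ij}=[z^{\lambda_i-i+j}]f_i(z)$ with $f_i(z)=(1+\beta z^{-1})^{i-1}\prod_{k=1}^{n}(1-x_k z)^{-1}$, using $\sum_{m\ge 0}\binom{i-1}{m}\beta^m h_{N+m}=[z^N]\bigl((1+\beta z^{-1})^{i-1}\sum_k h_k z^k\bigr)$. The key observation is that $\prod_{k=1}^{n}(1-x_kz)\cdot f_i(z)=(1+\beta z^{-1})^{i-1}$ is supported in degrees $\le 0$; hence the coefficients of $f_i$ satisfy, for every row $i$ and every $k\ge 1$, the \emph{same} order-$n$ recurrence $\sum_{t=0}^{n}(-1)^{t}e_t\,[z^{k-t}]f_i=0$, where $e_t=e_t(x_1,\dots,x_n)$ are the elementary symmetric polynomials. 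I would then induct downward from $r$ to $n$: when the bottom index $s>\ell$ (so $\lambda_s=0$) and $s\ge n+1$, the column operation $C_s\mapsto C_s-\sum_{t=1}^{n}(-1)^{t-1}e_t\,C_{s-t}$ annihilates every entry of the last column in rows $i<s$ (there $\lambda_i-i+s\ge 1$, so the recurrence applies), while the $(s,s)$ entry becomes $\sum_{t=0}^{n}(-1)^t e_t\,[z^{-t}]f_s=[z^{0}]\bigl((1+\beta z^{-1})^{s-1}\bigr)=1$. Expanding along this last column gives $\det(M)_{s\times s}=\det(M)_{(s-1)\times(s-1)}$, and iterating down to size $n$ identifies the result with \eqref{eq:Jacobi-Trudi} in the variables $x_1,\dots,x_n$, i.e.\ with $G_\lambda(x_1,\dots,x_n)$.

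\textbf{Main obstacle.} Step 1 is routine free-fermion bookkeeping. The delicate point is Step 2: one must locate the correct column operation and, crucially, verify the boundary computation that the modified diagonal entry is exactly $1$ (not merely some constant), which is what forces the clean drop in size. The inequality bookkeeping --- that $\lambda_i-i+s\ge 1$ for all $i<s$ so that the recurrence is applicable off the diagonal, while it \emph{fails} on the diagonal precisely because $\lambda_s=0$ --- is where the hypotheses $\ell(\lambda)\le n\le r$ are used, and is the part most likely to hide sign or range errors.
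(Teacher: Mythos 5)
Your argument is correct, and Step 1 is essentially the paper's own computation: the paper packages the fermions into generating functions $\psi(z_i)$, conjugates by $e^{(i-1)\Theta}$, and applies Wick's theorem (Theorem \ref{thm:Wick}) to arrive at exactly the same $r\times r$ determinant $\det\bigl(\sum_{m}\binom{i-1}{m}\beta^{m}h_{\lambda_i-i+j+m}(x)\bigr)$; your route via Lemma \ref{lemma:e_theta_action_elem} and row-multilinearity is the coefficient-level version of the same manipulation. The two proofs part ways only at the end. The paper finishes by citing the Jacobi--Trudi identity \eqref{eq:Jacobi-Trudi}, which is stated for an $n\times n$ determinant in $n$ variables, and leaves implicit the reduction from the $r\times r$ determinant in $n$ variables to the $n\times n$ one when $r>n$ (equivalently, it relies on the stability $G_\lambda(x_1,\dots,x_r)\vert_{x_{n+1}=\cdots=x_r=0}=G_\lambda(x_1,\dots,x_n)$ known from the cited references). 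Your Step 2 proves this collapse directly, and the bookkeeping is right: after the column operation $C_s\mapsto C_s-\sum_{t=1}^{n}(-1)^{t-1}e_t\,C_{s-t}$ the entry in row $i<s$ becomes $[z^{\lambda_i-i+s}]\bigl((1+\beta z^{-1})^{i-1}\bigr)=0$ because the exponent is at least $1$, while the diagonal entry becomes $[z^{0}]\bigl((1+\beta z^{-1})^{s-1}\bigr)=1$, so the size drops by one at each stage from $r$ down to $n$. The trade-off is clear: your version costs an extra determinant manipulation but is self-contained, needing \eqref{eq:Jacobi-Trudi} only in the size-$n$ form in which the paper quotes it, whereas the paper's version is shorter but silently uses a size-stability fact it never states.
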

\begin{proof}
Let us consider the generating function
\begin{equation}\label{eq:genrating_function_Gr}
\Psi(z_1,\dots,z_r):=
\bra{0}
e^{H(x)}
\psi(z_1)e^{\Theta}
\psi(z_2)e^{\Theta}
\cdots
\psi(z_r)e^{\Theta}
\cdot e^{-r\Theta}
\ket{-r}
\end{equation}
of $G^r_\lambda(x)$.
Set $A_i=e^{(i-1)\Theta}\psi(z_i)e^{-(i-1)\Theta}=(1+\beta z_i^{-1})^{i-1}\psi(z_i)$.
From Wick's theorem (Theorem \ref{thm:Wick}), it follows that 
\begin{align*}
\Psi(z_1,\dots,z_r)
&=
\bra{0}
e^{H(x)}
A_1A_2\dots A_r
\ket{-r}
=
\det
(
\bra{0}
e^{H(x)}
A_i
e^{-H(x)}
\psi^\ast_{-j}
\ket{0}
)_{1\leq i,j\leq r}.
\end{align*}
By substituting $e^{H(x)}A_ie^{-H(x)}=(1+\beta z_i^{-1})^{i-1}(\textstyle \sum_{m=0}^{\infty}h_m(x)z_i^m)\psi(z_i)$, which follows from Lemma \ref{lemma:e_theta_action_elem_end}, we have
\begin{align*}
\Psi(z_1,\dots,z_r)
&=
\det
\left(
(1+\beta z_i^{-1})^{i-1}
(\textstyle \sum_{m=0}^{\infty}h_m(x)z_i^m)
\bra{0}
\psi(z_i)
\psi^\ast_{-j}
\ket{0}
\right)_{1\leq i,j\leq r}\\
&=
\det
\left(
(1+\beta z_i^{-1})^{i-1}
(\textstyle \sum_{m=0}^{\infty}h_m(x)z_i^m)
z_i^{-j}
\right)_{1\leq i,j\leq r}.
\end{align*}
Comparing the coefficients of $z_1^{\lambda_1-1}z_2^{\lambda_2-2}\cdots z_r^{\lambda_r-r}$ on the both sides, we obtain
\[
G^r_\lambda(x)=\det
\left(
\sum_{m=0}^{\infty}
\binom{i-1}{m}
\beta^{m}
h_{\lambda_i-i+j+m}(x)
\right)_{1\leq i,j\leq r}.
\]
From (\ref{eq:Jacobi-Trudi}), we have the desired result.
\end{proof}


\subsection{The completed ring $\widehat{\Lambda}$}\label{sec:completion}

Put $k=\CC(\beta)$.
Let $\Lambda$ be the $k$-algebra of symmetric functions \cite[\S I.2]{macdonald1998symmetric} in $x_1,x_2,\dots$.
In this section we give a brief review on the completed ring $\widehat{\Lambda}\supset \Lambda$.

Let $M_n$ ($n\geq 1$) be the $k$-subspace of $\Lambda$ that is expressed as 
\[
M_n:=\left\{\sum_{i=1}^N c_{\lambda_i} s_{\lambda_i}(x)\
;
\, N\geq 0,
\, \lambda_1,\dots,\lambda_N \mbox{ are partitions},
\, c_{\lambda_i}\in k,
\, \ell(\lambda_i)\geq n,
 \right\},
\]
where $\ell(\lambda)$ is the length of a partition $\lambda$.
Since $M_n\supset M_{n+1}$, an inverse system
\[
\Lambda/M_1\leftarrow \Lambda/M_2\leftarrow\Lambda/M_3 \leftarrow \cdots
\]
of $k$-spaces exists.
Let $\widehat{\Lambda}:=\lim\limits_{\longleftarrow} (\Lambda/M_n)$ be the inverse limit.
Note that there exists a natural inclusion $\Lambda\hookrightarrow \widehat{\Lambda}$.

It is convenient to introduce a $k$-linear topology on $\Lambda$ where the family $\{M_n\}_{n=1,2,\dots}$ forms an open neighborhood base at $0$.
In terms of this topology, the inclusion $\Lambda \hookrightarrow\widehat{\Lambda}$ can be viewed as a completion of the topological space $\Lambda$.
Note that
\[
f(x)\in M_{n+1} \iff f(x_1,\dots,x_{n},0,0,\dots)=0.
\]

Moreover, $\widehat{\Lambda}$ is indeed a topological $k$-algebra, over which the multiplication is also continuous.
\begin{lemma}\label{lemma:small_polynomial_1}
If $n_1,\dots,n_r>-r$, then
$
\bra{0}
e^{H(x)}\psi_{n_1}\dots \psi_{n_r}
\ket{-r}\in M_r
$.
\end{lemma}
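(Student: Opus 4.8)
The plan is to reduce the statement directly to the fermionic Schur-function formula of Theorem~\ref{thm:Schur}, exploiting the fact that $M_r$ is spanned by the Schur functions $s_\mu(x)$ with $\ell(\mu)\geq r$. Concretely, I would show that the vacuum expectation value in question is, up to an overall sign, a single Schur function $s_\lambda(x)$ whose partition $\lambda$ has exactly $r$ nonzero parts, so that it automatically lands in $M_r$; the hypothesis $n_i>-r$ is precisely what forces this length condition.

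First I would dispose of the degenerate case. If two indices coincide, say $n_i=n_j$ with $i\neq j$, then $\psi_{n_i}^2=0$ (from $[\psi_{n_i},\psi_{n_i}]_+=0$), and after using the anticommutation relations to bring these equal factors adjacent, the product $\psi_{n_1}\cdots\psi_{n_r}$ vanishes; hence the expectation value is $0\in M_r$. So I may assume the $n_i$ are pairwise distinct.

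Next, I would sort the factors into strictly decreasing order. Using $[\psi_m,\psi_n]_+=0$ repeatedly there is a sign $\varepsilon\in\{\pm1\}$ with
\[
\psi_{n_1}\cdots\psi_{n_r}=\varepsilon\,\psi_{p_1}\cdots\psi_{p_r},\qquad p_1>p_2>\dots>p_r.
\]
Setting $\lambda_i:=p_i+i$, the strict inequalities $p_i>p_{i+1}$ translate into $\lambda_i\geq\lambda_{i+1}$, so $\lambda=(\lambda_1\geq\dots\geq\lambda_r)$ is a partition and $p_i=\lambda_i-i$. Theorem~\ref{thm:Schur} then identifies $\bra{0}e^{H(x)}\psi_{p_1}\cdots\psi_{p_r}\ket{-r}$ with $s_\lambda(x)$, and therefore the original expectation value equals $\varepsilon\,s_\lambda(x)$.

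Finally I would verify $\ell(\lambda)\geq r$, which is the crux of the argument and the only place the hypothesis enters. The smallest sorted index is $p_r=\min_i n_i$, and $n_i>-r$ gives $p_r>-r$, hence $\lambda_r=p_r+r>0$. Thus every part of $\lambda$ is strictly positive, $\ell(\lambda)=r$, and $\varepsilon\,s_\lambda(x)\in M_r$. I do not expect a genuine obstacle here; the delicate point to flag is simply that the strict inequality $n_i>-r$ is essential, since allowing $n_i=-r$ would only give $\lambda_r\geq 0$ and hence membership in $M_{r-1}$ rather than $M_r$.
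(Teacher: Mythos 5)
Your proof is correct and is exactly the argument the paper intends: the paper's own proof of this lemma is the single line ``It follows from Theorem~\ref{thm:Schur},'' and your write-up simply supplies the omitted details (killing repeated indices via $\psi_n^2=0$, sorting with a sign, setting $\lambda_i=p_i+i$, and checking $\lambda_r>0$ from $n_i>-r$). No differences to report.
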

\begin{proof}
It follows from Theorem \ref{thm:Schur}.
\end{proof}

It is known that there exists a unique element $G_\lambda(x)\in \widehat{\Lambda}$, which is called the \textit{stable Grothendieck polynomial}~\cite{fomin1994grothendieck}, that satisfies the equation
\[
G_\lambda(x_1,\dots,x_n)=G_\lambda(x_1,\dots,x_n,0,0,\dots)
\]
for any $n$.
From Proposition \ref{prop:G_and_Gr}, we have
\begin{equation}\label{eq:GandGr}
G_\lambda(x)-G^r_\lambda(x)\in M_{n+1}\qquad
\mbox{for any }\ell(\lambda)\leq n\leq r,
\end{equation}
which implies the fact that `$G_\lambda(x)$ and $G^r_\lambda(x)$ are sufficiently near.' 
From \eqref{eq:GandGr}, by putting $n=r$, we have $G_\lambda(x)-G^r_\lambda(x)\in M_{r+1}$.
As a subset of the topological space $\widehat{\Lambda}$, the sequence $G^1_\lambda(x),G^2_\lambda(x),\dots\in \widehat{\Lambda}$ converges to $G_\lambda(x)$. 
We simply write this fact as
\begin{equation}\label{eq:limG^r}
G_\lambda(x)=\lim\limits_{r\to \infty}G^r_\lambda(x).
\end{equation}

\subsection{Remarks on elements of $\widehat{\Lambda}$}

We will often interested in symmetric functions of the form
\begin{equation}\label{eq:new_eq_considering}
\bra{0}e^{H(x)}\psi_{m_1}\dots \psi_{m_r}e^{s\Theta}\ket{-r}
\end{equation}
where $r,s\geq 0$ and $m_1,\dots,m_r>-r$.
In general, such symmetric function cannot be contained in $\Lambda$.
If fact, if $r=0$ and $s=1$, we have
\[
\bra{0}e^{H(x)}e^{\Theta}\ket{0}=
\prod_{i=1}^{\infty}(1+\beta x_i)\cdot
\bra{0}e^{\Theta}e^{H(x)}\ket{0}
=\prod_{i=1}^{\infty}(1+\beta x_i)
\in \widehat{\Lambda}\setminus\Lambda.
\]
We can check that, if we substitute $x_{n+1}=x_{n+2}=\cdots=0$, this function reduces to a symmetric polynomial in $n$ variables.
The following lemma states that the same is true for any symmetric function of the form \eqref{eq:new_eq_considering}.
\begin{lemma}\label{lemma:f(x1,,,xn)}
Let 
$
H(x_1,\dots,x_n):=H(x)\vert_{x_{n+1}=x_{n+2}=\cdots=0}.
$
Then
\begin{equation}\label{eq:f(x)}
f(x_1,\dots,x_n):=
\bra{0}e^{H(x_1,\dots,x_n)}\psi_{m_1}\dots \psi_{m_r}e^{s\Theta}\ket{-r}
\end{equation}
is a symmetric polynomial in $x_1,\dots,x_n$.
\end{lemma}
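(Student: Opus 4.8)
The plan is to pass to a generating function, collapse the vacuum expectation value to a single $r\times r$ determinant via Wick's theorem, and then prove polynomiality of that determinant by a factor-by-factor divisibility argument.

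First I would introduce the generating series
\[
\Psi(z_1,\dots,z_r):=\bra0 e^{H(x_1,\dots,x_n)}\psi(z_1)\cdots\psi(z_r)e^{s\Theta}\ket{-r},
\]
so that $f(x_1,\dots,x_n)$ is the coefficient of $z_1^{m_1}\cdots z_r^{m_r}$. Commuting $e^{s\Theta}$ to the left through the $\psi(z_i)$ by the $s$-fold, inverted form of \eqref{eq:e_theta_action}, namely $e^{-s\Theta}\psi(z)e^{s\Theta}=(1+\beta z^{-1})^{-s}\psi(z)$, and then using $e^{H(x)}e^{s\Theta}=\prod_{i}(1+\beta x_i)^s\,e^{s\Theta}e^{H(x)}$ together with $\bra0 e^{s\Theta}=\bra0$ (which holds since $\bra0 a_{-m}=0$ for $m>0$, by the involution $\omega$ and $a_m\ket0=0$), I would obtain
\[
\Psi=\prod_{l=1}^n(1+\beta x_l)^s\,\prod_{j=1}^r(1+\beta z_j^{-1})^{-s}\,\bra0 e^{H(x_1,\dots,x_n)}\psi(z_1)\cdots\psi(z_r)\ket{-r}.
\]
Exactly as in the proof of Proposition \ref{prop:G_and_Gr}, Wick's theorem (Theorem \ref{thm:Wick}) and Lemma \ref{lemma:e_theta_action_elem_end} evaluate the remaining expectation as $\det(h(z_i)z_i^{-j})$ with $h(z)=\sum_{m\ge0}h_m(x_1,\dots,x_n)z^m$. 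Extracting the coefficient then gives $f=\prod_{l=1}^n(1+\beta x_l)^s\det(g_{m_i+j})_{1\le i,j\le r}$, where $g_a=\sum_{t\ge0}\binom{-s}{t}\beta^t h_{a+t}(x_1,\dots,x_n)$.

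Second, I would make the entries explicit and isolate the obstruction to polynomiality. Using $h_k=\sum_{l=1}^n x_l^{k+n-1}\big/\prod_{l'\ne l}(x_l-x_{l'})$ (valid here for every entry, since $m_i>-r$ and $r\le n$ force all exponents to be nonnegative), the entries become
\[
g_a=\sum_{l=1}^n \frac{x_l^{a+n-1}}{(1+\beta x_l)^s\prod_{l'\ne l}(x_l-x_{l'})}.
\]
In particular $(g_{m_i+j})$ is a sum of $n$ rank-one matrices, so $\det(g_{m_i+j})=0$ whenever $r>n$, in which case $f=0$. Assuming $r\le n$ and setting $P_a:=\prod_{l=1}^n(1+\beta x_l)^s g_a$, a residue computation at each diagonal $x_i=x_j$ shows the apparent poles of $P_a$ cancel, so $P_a\in k[x_1,\dots,x_n]$; consequently $f=\prod_{l=1}^n(1+\beta x_l)^{-s(r-1)}\det(P_{m_i+j})$, and it remains only to prove the divisibility $\prod_{l=1}^n(1+\beta x_l)^{s(r-1)}\mid\det(P_{m_i+j})$.

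The hard part is this last divisibility, which I would establish factor by factor, regarding $\det(P_{m_i+j})$ as a polynomial in a single variable $x_{l_0}$ and estimating its order of vanishing at $x_{l_0}=-1/\beta$; write $u:=1+\beta x_{l_0}$. In the formula for $P_a$ every summand with $l\ne l_0$ carries the factor $(1+\beta x_{l_0})^s=u^s$ in its numerator while its denominator is a unit at $x_{l_0}=-1/\beta$, so together they form a matrix $R$ with $R_{ij}=O(u^s)$; the remaining $l=l_0$ summand equals $\tilde B(u)\,x_{l_0}^{m_i+j}$, hence is the rank-one matrix $\tilde B(u)\,(x_{l_0}^{m_i})_i(x_{l_0}^{j})_j^{\!\top}$. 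Applying the matrix determinant lemma $\det(R+c\,v w^{\top})=\det R+c\,w^{\top}\mathrm{adj}(R)v$, the term $\det R$ is $O(u^{sr})$ while the entries of $\mathrm{adj}(R)$ are $(r-1)\times(r-1)$ minors of $R$, hence $O(u^{s(r-1)})$; therefore $\det(P_{m_i+j})=O(u^{s(r-1)})$, i.e.\ $(1+\beta x_{l_0})^{s(r-1)}$ divides it. Since the $n$ factors $(1+\beta x_{l_0})$ are pairwise coprime in $k[x_1,\dots,x_n]$, their product $\prod_{l=1}^n(1+\beta x_l)^{s(r-1)}$ divides $\det(P_{m_i+j})$, and $f$ is a polynomial. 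I expect the rank-one-plus-$O(u^s)$ splitting of $(P_{m_i+j})$ and the ensuing order estimate to be the crux; everything preceding it is a direct adaptation of the computation in Proposition \ref{prop:G_and_Gr}.
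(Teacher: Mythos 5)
Your route is genuinely different from the paper's. The paper forms no determinant at all: it repeatedly pushes $e^{s\Theta}$ to the right past $\psi_{-r-1},\psi_{-r-2},\dots$ using $e^{s\Theta}\psi_{-r-t-1}=(\psi_{-r-t-1}+X_{-r-t})e^{s\Theta}$ with $X_{-r-t}$ a \emph{finite} combination of fermions, so that $f$ becomes a series over $t\geq 0$ of expectation values $\bra{0}e^{H(x_1,\dots,x_n)}\psi_{m_1}\cdots\psi_{m_r}(\cdots)\ket{-r-t-1}$; each term lies in $M_{r+t+1}$ by Lemma \ref{lemma:small_polynomial_1} and hence vanishes once $r+t>n$, so the series is a finite sum of polynomials. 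That argument is uniform in $r$ and $n$ and immediately yields Proposition \ref{prop:near_criterion}, which the rest of the paper needs. Your Wick-determinant computation instead produces the closed form $f=\prod_{l}(1+\beta x_l)^{-s(r-1)}\det(P_{m_i+j})$ and reduces polynomiality to a divisibility statement, proved by the order-of-vanishing estimate at $1+\beta x_{l_0}=0$ via the matrix determinant lemma. For $r\leq n$ I checked the steps (the cancellation of diagonal poles in $P_a$, the $O(u^s)$ splitting, the adjugate estimate, and the coprimality of the factors) and the argument appears correct; it even gives extra information (an explicit Jacobi--Trudi-type expression and degree bounds), at the cost of considerable length.

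There is, however, a gap in the case $r>n$, which the lemma genuinely requires: it is precisely what allows $f$ to define an element of $\widehat{\Lambda}$, since one needs $f(x_1,\dots,x_n)$ to be a polynomial for \emph{every} $n$, including $n<r$. You dispose of this case by saying $(g_{m_i+j})$ is a sum of $n$ rank-one matrices, but that decomposition rests on the substitution $h_k=\sum_{l}x_l^{k+n-1}\big/\prod_{l'\neq l}(x_l-x_{l'})$, which you yourself justify only under $r\leq n$. The identity holds exactly for $k\geq-(n-1)$; for $k\leq -n$ one has $h_k=0$ while the right-hand side is nonzero (e.g.\ $n=2$, $k=-2$ gives $-1/(x_1x_2)$). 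Since your subscripts satisfy $m_i+j+t\geq -r+2$, the substitution is valid for all entries only when $r\leq n+1$; for $r\geq n+2$ the rows with $m_i\leq -n-1$ are misrepresented and the rank argument does not apply as written. You need a separate treatment of $r>n$ --- for instance the paper's expansion of $e^{s\Theta}\ket{-r}$ combined with Lemma \ref{lemma:small_polynomial_1}, or a corrected description of those entries showing the column space still has dimension at most $n$.
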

\begin{proof}
Let
$
X_{-r}:=
\binom{s}{1}\beta\psi_{-r}+\binom{s}{2}\beta^2\psi_{-r+1}+\cdots
+\binom{s}{s}\beta^s\psi_{-r-1+s}
$.
Since $e^{s\Theta}\psi_{-r-1}=(\psi_{-r-1}+X_{-r})e^{s\Theta}$, we have
\begin{align}
&\bra{0}e^{H(x_1,\dots,x_n)}\psi_{m_1}\dots \psi_{m_r}e^{s\Theta}\ket{-r}\label{eq:infiniteseries}\\
&=
\bra{0}e^{H(x_1,\dots,x_n)}\psi_{m_1}\dots \psi_{m_r}\ket{-r}\nonumber\\
&\hspace{20pt}+
\bra{0}e^{H(x_1,\dots,x_n)}\psi_{m_1}\dots \psi_{m_r}X_{-r}\ket{-r-1}\nonumber\\
&\hspace{20pt}+
\bra{0}e^{H(x_1,\dots,x_n)}\psi_{m_1}\dots \psi_{m_r}(\psi_{-r-1}+X_{-r})X_{-r-1}\ket{-r-2}\nonumber\\
&\hspace{20pt}+
\bra{0}e^{H(x_1,\dots,x_n)}\psi_{m_1}\dots \psi_{m_r}(\psi_{-r-1}+X_{-r})(\psi_{-r-2}+X_{-r-1})X_{-r-2}\ket{-r-3}\nonumber\\
&\hspace{20pt}+\cdots.\nonumber
\end{align}
Because
\[
\bra{0}e^{H(x_1,\dots,x_n)}\psi_{m_1}\dots \psi_{m_r}(\psi_{-r-1}+X_{-r})\cdots(\psi_{-r-t}+X_{-r-t+1})X_{-r-t}\ket{-r-t-1}
\]
is $0$ if $r+t>n$ (see Lemma \ref{lemma:small_polynomial_1}), the right hand side on (\ref{eq:infiniteseries}) is in fact a polynomial.
\end{proof}

From Lemma \ref{lemma:f(x1,,,xn)}, $f(x_1,\dots,x_n)$ in (\ref{eq:f(x)}) determines a unique element of $\Lambda/M_{n+1}$.
Because $f(x_1,\dots,x_n)=f(x_1,\dots,x_n,0)$ for any $n$, there uniquely exists an element $f(x)=f(x_1,x_2,\dots)\in \widehat{\Lambda}$ which satisfies $f(x_1,\dots,x_n)=f(x_1,\dots,x_n,0,0,\dots)$.
In other words, we have
\[
f(x)=\bra{0}e^{H(x)}\psi_{m_1}\dots \psi_{m_r}e^{s\Theta}\ket{-r}.
\]

The expression (\ref{eq:infiniteseries}) implies that:
\begin{prop}\label{prop:near_criterion}
$
\bra{0}e^{H(x)}\psi_{m_1}\dots \psi_{m_r}(e^{s\Theta}-1)\ket{-r}\in M_{r+1}
$.
\end{prop}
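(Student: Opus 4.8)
The plan is to extract the statement directly from the expansion \eqref{eq:infiniteseries} that was set up in the proof of Lemma \ref{lemma:f(x1,,,xn)}. There the quantity $\bra{0}e^{H(x)}\psi_{m_1}\dots\psi_{m_r}e^{s\Theta}\ket{-r}$ was written as a sum over $t\geq 0$ whose leading ($t=0$) summand is exactly $\bra{0}e^{H(x)}\psi_{m_1}\dots\psi_{m_r}\ket{-r}$, the contribution of the constant term $1$ in $e^{s\Theta}$. First I would subtract this leading summand, so that the quantity in the statement equals the tail of the series, namely the sum over $t\geq 0$ of the terms
\[
\bra{0}e^{H(x)}\psi_{m_1}\dots\psi_{m_r}(\psi_{-r-1}+X_{-r})\cdots(\psi_{-r-t}+X_{-r-t+1})X_{-r-t}\ket{-r-t-1},
\]
with the convention that for $t=0$ this reads $\bra{0}e^{H(x)}\psi_{m_1}\dots\psi_{m_r}X_{-r}\ket{-r-1}$.

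Next I would verify that each such tail summand falls under Lemma \ref{lemma:small_polynomial_1}. Expanding the product of the factors $(\psi_{-r-i}+X_{-r-i+1})$ together with $X_{-r-t}$, each summand becomes a linear combination of vectors $\bra{0}e^{H(x)}\psi_{n_1}\dots\psi_{n_{r+t+1}}\ket{-(r+t+1)}$ built from $r+t+1$ creation operators. The key point is that every index obeys $n_i>-(r+t+1)$: the original indices satisfy $m_j>-r>-(r+t+1)$; each factor $\psi_{-r-i}$ has index $-r-i>-(r+t+1)$ for $1\leq i\leq t$; and the indices occurring in the factors $X_{-r-i}$ ($0\leq i\leq t$) range only over $-r-i,\dots,-r-i-1+s$, all strictly above $-(r+t+1)$. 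Hence Lemma \ref{lemma:small_polynomial_1} gives that the $t$-th summand lies in $M_{r+t+1}$, and since $M_{r+1}\supseteq M_{r+2}\supseteq\cdots$, every summand lies in $M_{r+1}$.

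It then remains to interpret the membership $\in M_{r+1}$ for the whole series, and this is the only delicate point, since $M_{r+1}$ was defined as a space of finite combinations inside $\Lambda$ whereas the sum lives a priori in $\widehat{\Lambda}$. I would resolve it through the characterization $f(x)\in M_{r+1}\iff f(x_1,\dots,x_r,0,0,\dots)=0$: it suffices to check that setting $x_{r+1}=x_{r+2}=\cdots=0$ annihilates the expression. But every tail summand already lies in $M_{r+1}$ by the previous paragraph, so each vanishes under this specialization; moreover, as noted in the proof of Lemma \ref{lemma:f(x1,,,xn)}, only finitely many summands are nonzero once finitely many variables are retained, so no convergence issue arises. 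Therefore the reduction to $x_1,\dots,x_r$ is identically zero, which is precisely the assertion $\bra{0}e^{H(x)}\psi_{m_1}\dots\psi_{m_r}(e^{s\Theta}-1)\ket{-r}\in M_{r+1}$. The whole argument is short because \eqref{eq:infiniteseries} does the real work; the substance is just the index bookkeeping that lets Lemma \ref{lemma:small_polynomial_1} apply, together with the interpretive step identifying the infinite series in $\widehat{\Lambda}$ with membership in $M_{r+1}$ via the finite-variable reduction.
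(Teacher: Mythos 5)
Your argument is correct and is essentially the paper's own proof: the paper derives the proposition directly from the expansion \eqref{eq:infiniteseries}, and you have simply filled in the index bookkeeping showing each tail summand falls under Lemma \ref{lemma:small_polynomial_1} (hence lies in $M_{r+t+1}\subseteq M_{r+1}$), plus the finite-variable specialization that makes the membership of the infinite series precise. Nothing is missing.
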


The following lemma will be useful in the next section.
\begin{lemma}\label{lemma:various_expressions}
Let $m_1,\dots,m_\ell$ be a sequence of integers.
Then
\begin{align*}
&\bra{0}e^{H(x)}
\psi_{m_1}\dots \psi_{m_\ell}
\psi_{-\ell-1}e^{\Theta}
\psi_{-\ell-2}e^{\Theta}
\cdots
\psi_{-r}e^{\Theta}
\ket{-r}\\
&\hspace{10pt}=
\bra{0}e^{H(x)}
\psi_{m_1}\dots \psi_{m_\ell}
\psi_{-\ell-1}
\psi_{-\ell-2}
\cdots
\psi_{-r}e^{(r-\ell)\Theta}
\ket{-r}\\
&\hspace{10pt}=
\bra{0}e^{H(x)}
\psi_{m_1}\dots \psi_{m_\ell}
\psi_{-\ell-1}
\psi_{-\ell-2}
\cdots
\psi_{-r}
\ket{-r}\\
&\hspace{10pt}=
\bra{0}e^{H(x)}
\psi_{m_1}\dots \psi_{m_\ell}
\ket{-\ell}.
\end{align*}
\end{lemma}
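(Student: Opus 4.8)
The plan is to prove the three displayed equalities in turn. The first is immediate from Lemma~\ref{lemma:new_lemma_added}: taking $m=-\ell$ and $s=r-\ell$ there collapses the alternating product $\psi_{-\ell-1}e^{\Theta}\psi_{-\ell-2}e^{\Theta}\cdots\psi_{-r}e^{\Theta}$ into $\psi_{-\ell-1}\psi_{-\ell-2}\cdots\psi_{-r}e^{(r-\ell)\Theta}$, which is exactly the second line.

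For the bottom equality I would work entirely inside the Fock space, ignoring $e^{H(x)}$ and the prefix $\psi_{m_1}\cdots\psi_{m_\ell}$. The only fact needed is that $\psi_{-j}\ket{-j}=\ket{-j+1}$ for every $j\geq 1$: writing $\ket{-j}=\psi^\ast_{-j}\ket{-j+1}$ and using $\psi_{-j}\psi^\ast_{-j}=1-\psi^\ast_{-j}\psi_{-j}$ together with $\psi_{-j}\ket{-j+1}=0$ (the operator $\psi_{-j}$ anticommutes past the remaining $\psi^\ast$ and annihilates $\ket{0}$) gives the claim. Applying the factors of $\psi_{-\ell-1}\cdots\psi_{-r}$ from right to left then telescopes $\psi_{-\ell-1}\cdots\psi_{-r}\ket{-r}$ down to $\ket{-\ell}$, and left-multiplying by $\bra{0}e^{H(x)}\psi_{m_1}\cdots\psi_{m_\ell}$ yields the equality of the third and fourth lines.

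The middle equality---erasing $e^{(r-\ell)\Theta}$---is the crux and the step I expect to be the main obstacle; note that Proposition~\ref{prop:near_criterion} only places the difference in $M_{r+1}$, so an exact argument is required. I would reuse the expansion~\eqref{eq:infiniteseries} from the proof of Lemma~\ref{lemma:f(x1,,,xn)} with $s=r-\ell$, taking the full length-$r$ string $\psi_{m_1},\dots,\psi_{m_\ell},\psi_{-\ell-1},\dots,\psi_{-r}$ in place of $\psi_{m_1},\dots,\psi_{m_r}$. Its zeroth term is exactly the third line, so it suffices to show that every later term vanishes. The mechanism is Pauli exclusion: the string $\psi_{-\ell-1}\cdots\psi_{-r}$ already saturates the consecutive block of modes $\{-r,\dots,-\ell-1\}$, while each correction operator $X_{-r-j}$ is supported on the $s$ consecutive modes $\{-r-j,\dots,-\ell-j-1\}$.

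Concretely, I would argue by a short induction that the prefix $\psi_{-\ell-1}\cdots\psi_{-r}\psi_{-r-1}\cdots\psi_{-r-j+1}$ occupies exactly $\{-r-j+1,\dots,-\ell-1\}$, so that appending any term of $X_{-r-j+1}$---whose support $\{-r-j+1,\dots,-\ell-j\}$ lies inside this block once $j\geq 1$---repeats an already-present $\psi_n$ and kills the product via $\psi_n^2=0$. Hence each factor $(\psi_{-r-i}+X_{-r-i+1})$ collapses to its $\psi_{-r-i}$ part, and the trailing $X_{-r-t+1}$, again supported inside the occupied block, annihilates the whole $t$-th term. The delicate point is the bookkeeping of the two sources of occupied modes---the original saturated block $\{-r,\dots,-\ell-1\}$ and the diagonal $\psi_{-r-1},\dots,\psi_{-r-t+1}$ generated along the way---and checking that the upper endpoint $-\ell-j$ of each $X_{-r-j+1}$ never exceeds $-\ell-1$; since this holds for all $j\geq 1$, every correction term vanishes and the middle equality follows.
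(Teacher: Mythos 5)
Your proof is correct and follows essentially the same route as the paper: the first equality via Lemma~\ref{lemma:new_lemma_added}, the second by specializing the expansion \eqref{eq:infiniteseries} to the saturated string and killing every correction term by Pauli exclusion (the paper compresses your mode-counting induction into the single identity $\psi_{-\ell-1}\psi_{-\ell-2}\cdots\psi_{-r-t}Y_{-r-t}=0$), and the third by telescoping $\psi_{-j}\ket{-j}=\ket{-j+1}$, which the paper simply declares obvious. No gaps.
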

\begin{proof}
The first equality follows from Lemma \ref{lemma:new_lemma_added}.
Let
\[\textstyle
Y_{-r}:=
\binom{r-\ell}{1}\beta\psi_{-r}+\binom{r-\ell}{2}\beta^2\psi_{-r+1}+\cdots
+\binom{r-\ell}{r-\ell}\beta^{r-\ell}\psi_{-\ell-1}.
\]
Because $\psi_{-\ell-1}
\psi_{-\ell-2}
\cdots
\psi_{-r-t}Y_{-r-t}=0$ for any $t\geq 0$, the equation (\ref{eq:infiniteseries}) is now simplified as
\begin{align*}
&\bra{0}e^{H(x_1,\dots,x_n)}
\psi_{m_1}\dots \psi_{m_\ell}
\psi_{-\ell-1}
\psi_{-\ell-2}
\cdots
\psi_{-r}e^{(r-\ell)\Theta}
\ket{-r}\\
&\hspace{20pt}=
\bra{0}e^{H(x_1,\dots,x_n)}
\psi_{m_1}\dots \psi_{m_\ell}
\psi_{-\ell-1}
\psi_{-\ell-2}
\cdots
\psi_{-r}
\ket{-r},
\end{align*}
which implies the second equality.
The third equality is obvious.
\end{proof}

\begin{rem}
If $s'<0$, the expression $\bra{0}e^{H(x)}\psi_{m_1}\dots \psi_{m_r}e^{s'\Theta}\ket{-r}$ does not determine an element of $\widehat{\Lambda}$.
In fact, if $r=0$ and $s'=-1$, the expression is rewritten as
$\bra{0}e^{H(x)}e^{-\Theta}\ket{0}=\prod_{i=1}^\infty (1+\beta x_i)^{-1}$, which is not contained in $\widehat{\Lambda}$.
\end{rem}

\subsection{Free-fermionic expression of $G_\lambda(x)$}
Let us consider the symmetric function
\[
\oG_\lambda(x):=
\bra{0}
e^{H(x)}
\psi_{\lambda_1-1}e^{\Theta}
\psi_{\lambda_2-2}e^{\Theta}
\cdots
\psi_{\lambda_\ell-\ell}e^{\Theta}
\ket{-\ell}.
\]
Note that the symmetric functions $G^r_\lambda(x)$ and $\oG_\lambda(x)$ are quite similar but different.
Assume $r\geq \ell$.
From Lemma \ref{lemma:various_expressions}, their difference is expressed as
\begin{align*}
&\oG_\lambda(x)-G^r_\lambda(x)\\
&=
\bra{0}
e^{H(x)}
\psi_{\lambda_1-1}e^{\Theta}
\cdots
\psi_{\lambda_\ell-\ell}e^{\Theta}
\psi_{-\ell-1}
\cdots
\psi_{-r}
e^{-\ell\Theta}
(e^{\ell\Theta}-1)
\ket{-r}.
\end{align*}
From this equation and Proposition \ref{prop:near_criterion}, we have
\begin{equation}\label{eq:oGrandGr}
\oG_\lambda(x)-G^r_\lambda(x)\in M_{r+1},
\end{equation}
which implies that the sequence $\{G^r_\lambda(x)\}_{r=1,2,\dots}$ converges to $\oG_\lambda(x)$ in $\widehat{\Lambda}$.
It follows from \eqref{eq:limG^r} that
\[
\oG_\lambda(x)
=
\lim\limits_{r\to \infty}G^r_\lambda(x)
=
G_\lambda(x).
\]
In other words, we have:
\begin{thm}\label{thm:main}
\[
G_\lambda(x)=\bra{0}
e^{H(x)}
\psi_{\lambda_1-1}e^{\Theta}
\psi_{\lambda_2-2}e^{\Theta}
\cdots
\psi_{\lambda_\ell-\ell}e^{\Theta}
\ket{-\ell}.
\]
\end{thm}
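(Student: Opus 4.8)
The plan is to show that the two symmetric functions $\oG_\lambda(x)$ and $G_\lambda(x)$ coincide as elements of the completed ring $\widehat{\Lambda}$, by exhibiting them as the common limit of the sequence $\{G^r_\lambda(x)\}_{r\geq 1}$. Since $G_\lambda(x)=\lim_{r\to\infty}G^r_\lambda(x)$ is already established in \eqref{eq:limG^r}, it suffices to prove that $\oG_\lambda(x)$ is \emph{also} the limit of the very same sequence. Two elements of $\widehat{\Lambda}$ with the same limit in a topological space are equal, so this reduces the theorem to the single estimate $\oG_\lambda(x)-G^r_\lambda(x)\in M_{r+1}$ for all $r\geq\ell$.

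First I would produce a clean free-fermionic expression for the difference $\oG_\lambda(x)-G^r_\lambda(x)$. The key tool is Lemma \ref{lemma:various_expressions}, which rewrites a trailing block of $\psi_{-\ell-1}e^{\Theta}\cdots\psi_{-r}e^{\Theta}$ applied to $\ket{-r}$ in several equivalent forms, ultimately collapsing it down to $\ket{-\ell}$. Using the rewritten form of $G^r_\lambda(x)$ from \S\ref{sec:def_of_Gr} (the expression obtained via Lemma \ref{lemma:new_lemma_added}) together with Lemma \ref{lemma:various_expressions}, the two vacuum expectation values differ only in the exponential factor attached at the tail: $\oG_\lambda(x)$ corresponds to the identity operator whereas $G^r_\lambda(x)$ carries a factor $e^{-\ell\Theta}$ after the string $\psi_{-\ell-1}\cdots\psi_{-r}$. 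Factoring this as $e^{-\ell\Theta}(e^{\ell\Theta}-1)$ isolates the discrepancy as
\[
\oG_\lambda(x)-G^r_\lambda(x)
=
\bra{0}
e^{H(x)}
\psi_{\lambda_1-1}e^{\Theta}
\cdots
\psi_{\lambda_\ell-\ell}e^{\Theta}
\psi_{-\ell-1}
\cdots
\psi_{-r}
e^{-\ell\Theta}
(e^{\ell\Theta}-1)
\ket{-r}.
\]

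The decisive step is then to invoke Proposition \ref{prop:near_criterion}, which asserts precisely that any vacuum expectation value of the shape $\bra{0}e^{H(x)}\psi_{m_1}\cdots\psi_{m_r}(e^{s\Theta}-1)\ket{-r}$ lies in $M_{r+1}$. The indices here ($\lambda_1-1,\dots,\lambda_\ell-\ell,-\ell-1,\dots,-r$) are all strictly greater than $-r$, so the hypothesis of that proposition is met; this gives $\oG_\lambda(x)-G^r_\lambda(x)\in M_{r+1}$, exactly \eqref{eq:oGrandGr}. Because $M_{r+1}$ shrinks to $0$ in the $M_n$-topology, this membership says the sequence $\{G^r_\lambda(x)\}$ converges to $\oG_\lambda(x)$, and comparing with \eqref{eq:limG^r} forces $\oG_\lambda(x)=G_\lambda(x)$.

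I expect the main obstacle to be entirely bundled into the supporting machinery already in place rather than in the final argument itself: the genuine content lives in Proposition \ref{prop:near_criterion} and Lemma \ref{lemma:various_expressions}, whose proofs rest on the delicate observation that conjugating $e^{s\Theta}$ past a lowering fermion $\psi_{-r-1}$ produces a correction term $X_{-r}$ built from higher-index fermions, and that the resulting infinite series \eqref{eq:infiniteseries} truncates once the fermion count exceeds $n$. Given those results, the theorem is a short convergence argument. The one point demanding care is verifying the index condition $m_1,\dots,m_r>-r$ when applying Proposition \ref{prop:near_criterion} to the concatenated string, and confirming that the tail rewriting of Lemma \ref{lemma:various_expressions} applies verbatim with $m_i=\lambda_i-i$; both are routine but must be checked so that the proposition is invoked on a legitimately admissible expression.
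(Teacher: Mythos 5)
Your proposal reproduces the paper's own argument essentially verbatim: the paper likewise introduces $\oG_\lambda(x)$, uses Lemma \ref{lemma:various_expressions} to write $\oG_\lambda(x)-G^r_\lambda(x)$ as a vacuum expectation value with the factor $e^{-\ell\Theta}(e^{\ell\Theta}-1)$, applies Proposition \ref{prop:near_criterion} to get \eqref{eq:oGrandGr}, and concludes by comparing with \eqref{eq:limG^r}. The approach and the supporting lemmas invoked are the same, so the proposal is correct.
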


\subsection{``Another'' determinant formula for $G_\lambda(x)$}\label{sec:another_determinantial_formula}

We often write $G_n(x)=G_{(n)}(x)$, where $(n)$ is a partition of length $1$.
\begin{prop}\label{prop:G_n}
We have
\[
\sum_{n\in \ZZ} G_n(x)z^n=\frac{1}{1+\beta z^{-1}}\prod_{i=1}^\infty \frac{1+\beta x_i}{1-x_iz},
\]
where $G_n(x)=\bra{0}e^{H(x)}\psi_{n-1}e^{\Theta}\ket{-1}$ and 
$(1+\beta z^{-1})^{-1}=\sum_{n=0}^{\infty}(-\beta)^nz^{-n}$.
\end{prop}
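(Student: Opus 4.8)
The plan is to evaluate the generating function directly from the free-fermionic expression in Theorem~\ref{thm:main} specialized to the one-row partition $\lambda=(n)$, which gives $G_n(x)=\bra{0}e^{H(x)}\psi_{n-1}e^{\Theta}\ket{-1}$. Multiplying by $z^n$, summing over $n\in\ZZ$, and shifting the index by one so that $\psi_{n-1}$ is collected into the generating field $\psi(z)=\sum_m\psi_m z^m$, I would first reduce the claim to computing the single matrix element
\[
\sum_{n\in\ZZ}G_n(x)z^n=z\,\bra{0}e^{H(x)}\psi(z)e^{\Theta}\ket{-1}.
\]

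Next I would normal-order the operator string by pushing $e^{H(x)}$ to the right. Lemma~\ref{lemma:e_theta_action_elem_end} moves it past $\psi(z)$ at the cost of a factor $\sum_{i\geq 0}h_i(x)z^i=\prod_j(1-x_jz)^{-1}$, and the commutation identity $e^{H(x)}e^{\Theta}=\prod_i(1+\beta x_i)\,e^{\Theta}e^{H(x)}$ moves it past $e^{\Theta}$ at the cost of $\prod_i(1+\beta x_i)$. Since $H(x)$ is built only from the modes $a_n$ with $n>0$, which annihilate $\ket{-1}$, one has $e^{H(x)}\ket{-1}=\ket{-1}$, so $e^{H(x)}$ disappears. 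It then remains to evaluate $\bra{0}\psi(z)e^{\Theta}\ket{-1}$: applying \eqref{eq:e_theta_action} in the form $\psi(z)e^{\Theta}=(1+\beta z^{-1})^{-1}e^{\Theta}\psi(z)$, using $\bra{0}e^{\Theta}=\bra{0}$ (because $\Theta$ involves only $a_{-n}$ with $n>0$, and $\bra{0}a_{-n}=0$ under the involution), and finally computing the two-point function $\bra{0}\psi(z)\ket{-1}=\sum_m z^m\langle\psi_m\psi^{\ast}_{-1}\rangle=z^{-1}$ by Wick's theorem (Theorem~\ref{thm:Wick}). Assembling the factors and cancelling the overall $z\cdot z^{-1}$ yields exactly $(1+\beta z^{-1})^{-1}\prod_i(1+\beta x_i)(1-x_iz)^{-1}$.

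The manipulations are each routine, so the main point requiring care is bookkeeping of the formal series rather than any deep obstacle. In particular I would make sure that the factor $(1+\beta z^{-1})^{-1}$ produced by \eqref{eq:e_theta_action} is expanded in nonnegative powers of $z^{-1}$, i.e. as $\sum_{n\geq 0}(-\beta)^nz^{-n}$, which is precisely the convention fixed in the statement and which is what makes the left-hand sum over all $n\in\ZZ$ (including the contributions with small $n$) well defined. I would also verify explicitly the two annihilation facts $e^{H(x)}\ket{-1}=\ket{-1}$ and $\bra{0}e^{\Theta}=\bra{0}$ from the commutation relations \eqref{eq:relation_added} together with $a_n\ket{0}=0$ for $n>0$, since these are the only inputs beyond the cited lemmas on which the computation relies.
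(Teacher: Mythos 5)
Your proposal is correct and follows essentially the same route as the paper: both reduce the sum to the matrix element $z\,\bra{0}e^{H(x)}\psi(z)e^{\Theta}\ket{-1}$ and then peel off the three factors $(1+\beta z^{-1})^{-1}$, $\prod_i(1+\beta x_i)$, and $\prod_i(1-x_iz)^{-1}$ using \eqref{eq:e_theta_action}, the commutation of $e^{H(x)}$ with $e^{\Theta}$, and Lemma \ref{lemma:e_theta_action_elem_end}, together with $\bra{0}e^{\Theta}=\bra{0}$, $e^{H(x)}\ket{-1}=\ket{-1}$, and the two-point function $\bra{0}\psi(z)\ket{-1}=z^{-1}$. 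The only difference is the (immaterial) order in which the operators are commuted past one another.
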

\begin{proof}
\begin{align*}
\sum_{n\in \ZZ} G_n(x)z^n
&=
\bra{0}e^{H(x)}\psi(z)ze^{\Theta}\ket{-1}
=
(1+\beta z^{-1})^{-1}
\bra{0}e^{H(x)}e^{\Theta}\psi(z)z\ket{-1}\\
&=
(1+\beta z^{-1})^{-1}
\textstyle
\prod_{i=1}^\infty(1+\beta x_i)
\bra{0}e^{H(x)}\psi(z)z\ket{-1}\\
&=
(1+\beta z^{-1})^{-1}
\textstyle
\prod_{i=1}^\infty(1+\beta x_i)
\cdot (\sum_{i=0}^\infty h_i(x)z^i)\\
&=
\frac{1}{1+\beta z^{-1}}
\prod_{i=1}^\infty\frac{1+\beta x_i}{1-x_iz}.
\end{align*}
\end{proof}
Let $\mathcal{G}(z):=\sum_{n\in \ZZ} G_n(x)z^n$.
From the proof of Proposition \ref{prop:G_n}, we derive the commutative relation
\begin{equation}\label{eq:comG}
\textstyle
e^{H(x)}e^{-\Theta}\psi(z)e^{\Theta}e^{-H(x)}=
\prod_{i=1}^\infty(1+\beta x_i)^{\color{red}-1}\cdot \mathcal{G}(z)\psi(z).
\end{equation}

We consider the formal function
\begin{equation}\label{eq:formalPsi}
\overline{\Psi}(z_1,\dots,z_r):=
\bra{0}e^{H(x)}\psi(z_1)e^{\Theta}\cdots \psi(z_r)e^{\Theta}\ket{-r},
\end{equation}
which is a generating function of $G_\lambda(x)$.
\begin{prop}[\cite{HUDSON2017115}, see also \cite{nakagawa2018universalfactrial}]
\label{prop:another_determinant_G}
We have
\[
G_\lambda(x)=\det
\left(
\sum_{m=0}^{\infty}
\binom{i-r}{m}\beta^m
G_{\lambda_i-i+j+m}(x)
\right)_{1\leq i,j\leq r}.
\]
\end{prop}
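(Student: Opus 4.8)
The plan is to evaluate the generating function $\overline{\Psi}(z_1,\dots,z_r)$ of \eqref{eq:formalPsi} in closed form and then to read off the claim by extracting the coefficient of $z_1^{\lambda_1-1}\cdots z_r^{\lambda_r-r}$, which (for $r\ge\ell(\lambda)$) is exactly $G_\lambda(x)$ as recorded when $\overline{\Psi}$ was introduced. First I would push the $r$ factors $e^{\Theta}$ to the far left: writing \eqref{eq:e_theta_action} as $\psi(z)e^{\Theta}=(1+\beta z^{-1})^{-1}e^{\Theta}\psi(z)$ and commuting each $e^{\Theta}$ past the $\psi(z_j)$'s on its left collects a scalar together with $e^{r\Theta}$, giving
\[
\overline{\Psi}(z_1,\dots,z_r)=\prod_{j=1}^r(1+\beta z_j^{-1})^{-(r-j+1)}\cdot\bra{0}e^{H(x)}e^{r\Theta}\psi(z_1)\cdots\psi(z_r)\ket{-r}.
\]
Since $\bra{0}e^{r\Theta}=\bra{0}$ and $e^{H(x)}e^{\Theta}=\prod_i(1+\beta x_i)\,e^{\Theta}e^{H(x)}$, the inserted $e^{r\Theta}$ only contributes the scalar $\prod_i(1+\beta x_i)^r$, after which the surviving vacuum expectation value is $\bra{0}e^{H(x)}\psi(z_1)\cdots\psi(z_r)\ket{-r}$.

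Next I would conjugate the $\psi(z_i)$ by $e^{H(x)}$ (Lemma~\ref{lemma:e_theta_action_elem_end}) and apply Wick's theorem (Theorem~\ref{thm:Wick}) to the resulting expectation $\bra{0}\psi(z_1)\cdots\psi(z_r)\ket{-r}$, producing the Schur-type determinant $\det\!\big((\sum_m h_m(x)z_i^m)\,z_i^{-j}\big)_{i,j}$. The decisive step is then to replace the complete-homogeneous series by $\mathcal{G}$: Proposition~\ref{prop:G_n} gives $\sum_m h_m(x)z^m=(1+\beta z^{-1})\prod_i(1+\beta x_i)^{-1}\mathcal{G}(z)$, and inserting this into row $i$ turns the row factor $(1+\beta z_i^{-1})^{-(r-i+1)}$ into $(1+\beta z_i^{-1})^{i-r}$ while freeing one factor $\prod_i(1+\beta x_i)^{-1}$ per row. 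The $r$ freed factors cancel the $\prod_i(1+\beta x_i)^r$ obtained above, leaving
\[
\overline{\Psi}(z_1,\dots,z_r)=\det\!\left((1+\beta z_i^{-1})^{i-r}\,\mathcal{G}(z_i)\,z_i^{-j}\right)_{1\le i,j\le r}.
\]
One can reach this identity directly, bypassing the $h$-determinant, by commuting $e^{H(x)}e^{-\Theta}$ rightward through each block via \eqref{eq:comG}, which emits one $\mathcal{G}(z_i)$ per block; I expect the $(1+\beta x_i)$-bookkeeping to be identical.

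It then remains to extract the coefficient of $z_1^{\lambda_1-1}\cdots z_r^{\lambda_r-r}$. As each $z_i$ appears only in row $i$, multilinearity identifies this coefficient with the determinant of the entrywise coefficients, and the coefficient of $z_i^{\lambda_i-i}$ in $(1+\beta z_i^{-1})^{i-r}\mathcal{G}(z_i)z_i^{-j}$ equals $\sum_{m\ge0}\binom{i-r}{m}\beta^m G_{\lambda_i-i+j+m}(x)$. Since the left-hand coefficient is $G_\lambda(x)$, the formula follows.

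The step I expect to be the main obstacle is conceptual rather than computational. Because $i-r\le 0$, the binomial series does not terminate, and the individual matrix entries are honest infinite sums that need not converge in $\widehat{\Lambda}$ — already in a single variable an entry equals a non-polynomial series such as $x_1^{\lambda_i-i+j}(1+\beta x_1)^{i-r}$, whereas the full determinant collapses to $G_\lambda(x)\in\widehat{\Lambda}$. The safe reading of the statement is therefore as the coefficient identity extracted from the generating-function equality above, with the entries interpreted formally (say as power series in $\beta$); making this precise and justifying that coefficient extraction commutes with the determinant is where care is needed. The only other error-prone point is the factor bookkeeping in the $h\to\mathcal{G}$ substitution, where the powers of $(1+\beta x_i)$ and of $(1+\beta z_i^{-1})$ must cancel to leave precisely the exponent $i-r$.
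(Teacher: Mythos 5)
Your argument is correct and follows essentially the same route as the paper: the paper's operators $B_i=e^{-(r-i+1)\Theta}\psi(z_i)e^{(r-i+1)\Theta}$ are exactly the result of your pushing the $e^{\Theta}$'s to the left, and it likewise applies Wick's theorem and the relation \eqref{eq:comG} to arrive at $\det\bigl((1+\beta z_i^{-1})^{i-r}\mathcal{G}(z_i)z_i^{-j}\bigr)$ before extracting the coefficient of $z_1^{\lambda_1-1}\cdots z_r^{\lambda_r-r}$. Your closing observation that the non-terminating entries must be read through the generating-function identity is a subtlety the paper leaves implicit, but it does not affect the argument.
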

\begin{proof}
Let $B_i:=e^{-(r-i+1)\Theta}\psi(z_i)e^{(r-i+1)\Theta}$.
Applying Wick's theorem (Theorem \ref{thm:Wick}) to the generating function (\ref{eq:formalPsi}) gives
\begin{align*}
\overline{\Psi}(z_1,\dots,z_r)
&=
\bra{0}e^{H(x)}e^{r\Theta}B_1B_2\dots B_r\ket{r}\\
&=
\textstyle\prod_{l=1}^\infty(1+\beta x_l)^{\color{red}r}
\bra{0}e^{H(x)}B_1B_2\dots B_r\ket{-r}\\
&=
\textstyle\prod_{l=1}^\infty(1+\beta x_l)^{\color{red}r}
\cdot
\det
(
\bra{0}
e^{H(x)}
B_i
e^{-H(x)}
\psi^\ast_{-j}
\ket{0}
)_{1\leq i,j\leq r}.
\end{align*}
Since
\begin{align*}
e^{H(x)}B_ie^{-H(x)}
&=
(1+\beta z_i^{-1})^{-(r-i)}
e^{H(x)}e^{-\Theta}
\psi(z_i)
e^{\Theta}e^{-H(x)}\\
&=
\textstyle
\prod_{l=1}^\infty(1+\beta x_l{\color{red})}^{-1}\cdot 
(1+\beta z_i^{-1})^{-(r-i)}
\mathcal{G}(z_i)
\psi(z_i)
\end{align*}
(see (\ref{eq:comG})), we have
\begin{align*}
\overline{\Psi}(z_1,\dots,z_r)
&=
\det
\left((1+\beta z_i^{-1})^{-(r-i)}
\mathcal{G}(z_i)
\bra{0}
\psi(z_i)
\psi^\ast_{-j}
\ket{0}
\right)_{1\leq i,j\leq r}\\
&=
\det
\left((1+\beta z_i^{-1})^{-(r-i)}
\mathcal{G}(z_i)z_i^{-j}
\right)_{1\leq i,j\leq r}.
\end{align*}
Comparing the coefficients of $z_1^{\lambda_1-1}\cdots z_r^{\lambda_r-r}$ on the both sides, we obtain the desired equation. 
\end{proof}

\section{Dual stable Grothendieck polynomial $g_\lambda(x)$}\label{sec:gr}

\subsection{Definition}\label{sec:def_of_g}

For a partition $\lambda=(\lambda_1\geq \lambda_2\geq \dots\geq \lambda_\ell> 0)$ and $\lambda_{\ell+1}=\dots=\lambda_r=0$, we set
\[
g_\lambda(x):=
\bra{0}
e^{H(x)}
\psi_{\lambda_1-1}e^{-\theta}
\psi_{\lambda_2-2}e^{-\theta}
\cdots
\psi_{\lambda_r-r}e^{-\theta}
\ket{-r}.
\]
Note the the definition of $g_\lambda(x)$ does not depend on the choice of $r\geq \ell$ because of the equation
$
\ket{-r}=
\psi_{-r-1}\ket{-r-1}=
\psi_{-r-1}e^{-\theta}\ket{-r-1}
$.

\subsection{Proof of the duality}

The \textit{Hall inner product} 
$\left\langle\cdot ,\cdot \right\rangle:\Lambda\times \Lambda\to k$ is the non-degenerate $k$-bilinear form that satisfies $\left\langle s_\lambda(x),s_\mu(x)\right\rangle=\delta_{\lambda,\mu}$.
The bilinear form can be uniquely extended to the bilinear form $\widehat{\Lambda}\times \Lambda\to k$ continuously.

Let $X=\psi_{n_1}\dots\psi_{n_r}$ and $Y=\psi_{m_1}\dots\psi_{m_s}$.
If two symmetric functions $f(x)$ and $g(x)$ are expressed as $f(x)=\bra{0}e^{H(x)}X\ket{-r}$ and $g(x)=\bra{0}e^{H(x)}Y\ket{-s}$, their Hall inner product can be calculated by using the formula
\[
\left\langle f(x),g(x)\right\rangle
=
\bra{-r}X^\ast Y\ket{-s},
\]
which is obtained from Corollary \ref{cor:dual}.

\begin{prop}\label{prop:duality}
We have
\[
\left\langle
G_\lambda(x),g_\mu(x)
\right\rangle
=\delta_{\lambda,\mu}.
\]
This means that $g_\lambda(x)$ is nothing but the dual stable Grothendieck polynomial.
\end{prop}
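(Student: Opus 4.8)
The plan is to evaluate the pairing through the free-fermionic formula recorded just above, namely $\langle \bra{0}e^{H(x)}X\ket{-r},\bra{0}e^{H(x)}Y\ket{-s}\rangle=\bra{-r}X^{\ast}Y\ket{-s}$ for fermionic $X,Y$, and then to reduce the resulting vacuum expectation value to Corollary \ref{cor:dual}. The first task is to rewrite both $G_\lambda(x)$ and $g_\mu(x)$ as honest products of free fermions, since Theorem \ref{thm:main} and the definition of $g_\mu$ still involve the operators $e^{\Theta}$ and $e^{-\theta}$.

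For $g_\mu$ this is exact. Fix $r\geq\ell(\mu)$ and push each $e^{-\theta}$ to the right using Lemma \ref{lemma:e_theta_action_elem}; the trailing factor becomes $e^{-r\theta}\ket{-r}=\ket{-r}$, because $\theta$ is built from the annihilation-type modes $a_1,a_2,\dots$, so $\theta\ket{-r}=0$. This gives $g_\mu(x)=\bra{0}e^{H(x)}\Psi_1\cdots\Psi_r\ket{-r}$ with $\Psi_i=e^{-(i-1)\theta}\psi_{\mu_i-i}e^{(i-1)\theta}=\sum_{b\geq 0}\binom{-(i-1)}{b}\beta^b\psi_{\mu_i-i-b}$; in particular $g_\mu(x)$ is a finite combination of Schur functions of length at most $\ell(\mu)$, hence $g_\mu(x)\in\Lambda$. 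For $G_\lambda$ one cannot drop $e^{\Theta}$ exactly, but $G^r_\lambda$ can: pushing the $e^{\Theta}$'s to the right in its definition produces $\Phi_1\cdots\Phi_r\,e^{r\Theta}$, and the built-in $e^{-r\Theta}$ cancels $e^{r\Theta}$, so $G^r_\lambda(x)=\bra{0}e^{H(x)}\Phi_1\cdots\Phi_r\ket{-r}$ with $\Phi_i=e^{(i-1)\Theta}\psi_{\lambda_i-i}e^{-(i-1)\Theta}$. Because the pairing $\widehat{\Lambda}\times\Lambda\to k$ is continuous, $G_\lambda-G^r_\lambda\in M_{r+1}$ by \eqref{eq:GandGr}, and $g_\mu(x)$ only involves $s_\nu$ with $\ell(\nu)\leq\ell(\mu)$, so taking $r\geq\max(\ell(\lambda),\ell(\mu))$ forces the error term to pair to zero; thus $\langle G_\lambda,g_\mu\rangle=\langle G^r_\lambda,g_\mu\rangle$.

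Applying the pairing formula now gives, using that the involution satisfies $a_m^{\ast}=a_{-m}$ and hence $\Theta^{\ast}=\theta$ (so $\Phi_i^{\ast}=e^{-(i-1)\theta}\psi^{\ast}_{\lambda_i-i}e^{(i-1)\theta}=\sum_{a\geq 0}\binom{i-1}{a}\beta^a\psi^{\ast}_{\lambda_i-i+a}$),
\[
\langle G_\lambda,g_\mu\rangle=\bra{-r}\Phi_r^{\ast}\cdots\Phi_1^{\ast}\,\Psi_1\cdots\Psi_r\ket{-r}.
\]
Expanding the $\Phi_i^{\ast}$ and $\Psi_j$ and invoking Corollary \ref{cor:dual}, the right-hand side becomes a $\beta$-weighted sum of Kronecker deltas over the set-matchings of $\{\lambda_i-i+a_i\}_i$ with $\{\mu_j-j-b_j\}_j$. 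The diagonal term $a_i=b_j=0$ already yields $\bra{-r}\psi^{\ast}_{\lambda_r-r}\cdots\psi^{\ast}_{\lambda_1-1}\psi_{\mu_1-1}\cdots\psi_{\mu_r-r}\ket{-r}=\delta_{\{\lambda_i-i\},\{\mu_j-j\}}=\delta_{\lambda,\mu}$.

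The main obstacle is to show that the remaining (off-diagonal) contributions cancel, so that the whole sum is exactly $\delta_{\lambda,\mu}$. This is the biorthogonality of the two families, and it reflects the fact that $\Phi_i^{\ast}$ and $\Psi_i$ carry the mutually inverse deformations $(1+\beta z^{-1})^{i-1}$ and $(1+\beta z)^{-(i-1)}$; the per-index engine of the cancellation is the Vandermonde identity $\sum_{a+b=d}\binom{i-1}{a}\binom{-(i-1)}{b}=\binom{0}{d}=\delta_{d,0}$. I would make this rigorous by passing to the generating function $\overline{\Psi}(z_1,\dots,z_r)$ of $G_\lambda$ and its analogue $\bra{0}e^{H(x)}\psi(w_1)e^{-\theta}\cdots\psi(w_r)e^{-\theta}\ket{-r}$ of $g_\mu$, evaluating their pairing as a single Wick determinant whose entries are the deformed contractions $\bra{-r}\cdots\psi^{\ast}(z_i)\cdots\psi(w_j)\cdots\ket{-r}$, and then checking that the coefficient of $z_1^{\lambda_1-1}\cdots z_r^{\lambda_r-r}w_1^{\mu_1-1}\cdots w_r^{\mu_r-r}$ is $\delta_{\lambda,\mu}$; the determinant automatically supplies the signs that organize the set-matchings, while the strict monotonicity of $\lambda_i-i$ and $\mu_j-j$ confines the surviving matchings to the diagonal. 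Controlling this determinant and the coefficient extraction uniformly in $r$ is the crux; everything else is bookkeeping built on Corollary \ref{cor:dual}.
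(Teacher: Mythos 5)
Your setup is sound and matches the paper's starting point: reducing the Hall pairing to the fermionic expectation value $\bra{-r}X^{\ast}Y\ket{-s}$, identifying $\Theta^{\ast}=\theta$, and observing that the diagonal matching produces $\delta_{\lambda,\mu}$ via Corollary \ref{cor:dual} are all correct (and your reduction from $G_\lambda$ to $G^r_\lambda$ via continuity is, if anything, more careful than the paper). The genuine gap is that the entire content of the proposition --- that the off-diagonal contributions cancel --- is exactly the step you defer: you call it ``the main obstacle'' and ``the crux'' and supply only a plan. Moreover, the plan as sketched does not yet work. The Vandermonde identity $\sum_{a+b=d}\binom{i-1}{a}\binom{-(i-1)}{b}=\delta_{d,0}$ only evaluates the \emph{diagonal} entries of your would-be Wick determinant, because the deformations $(1+\beta z^{-1})^{i-1}$ and $(1+\beta z)^{-(j-1)}$ are mutually inverse only when $i=j$. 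The off-diagonal entries $\bra{-r}\Phi_i^{\ast}\Psi_j\ket{-r}$ with $i\neq j$ are nonzero in general, so the determinant is not visibly $\prod_i\delta_{\lambda_i,\mu_i}$; you would still need genuine row/column operations (or some triangularity argument) to finish, and none is given. As written, you have shown that the diagonal term contributes $\delta_{\lambda,\mu}$, not that nothing else contributes.

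The paper closes this gap by a different and complete mechanism that never expands $e^{\pm\theta}$ into binomial series. It proves two vanishing lemmas: $\psi^\ast_N\,\psi_{n_1}e^{-\theta}\cdots\psi_{n_s}e^{-\theta}\ket{-s}=0$ whenever $N$ exceeds all the $n_i$ (Lemma \ref{lemma:a}), and the mirror statement $\bra{-s}e^{\theta}\psi^\ast_{m_r}\cdots e^{\theta}\psi^\ast_{m_1}\psi_M=0$ for $M>m_1>\cdots>m_r\geq -s$ (Lemma \ref{lemma:b}). These kill the cases $\lambda_1>\mu_1$ and $\lambda_1<\mu_1$ outright, and when $\lambda_1=\mu_1$ the identity $\psi^\ast_{\lambda_1-1}\psi_{\mu_1-1}=1-\psi_{\mu_1-1}\psi^\ast_{\lambda_1-1}$ together with $e^{\theta}e^{-\theta}=1$ peels off one pair of fermions and reduces to the same expression for the truncated partitions; induction finishes the proof. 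To repair your argument you should either adopt this peeling strategy or actually prove the determinant identity you describe --- in either case the cancellation must be carried out, not just announced.
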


To prove Proposition \ref{prop:duality}, it suffices to show
\begin{equation}
\bra{-r}
e^{\theta}\psi^\ast_{\lambda_{r}-r}
\cdots
e^{\theta}\psi^\ast_{\lambda_{2}-2}
e^{\theta}\psi^\ast_{\lambda_{1}-1}
\psi_{\mu_1-1}e^{-\theta}
\psi_{\mu_2-2}e^{-\theta}
\cdots
\psi_{\mu_s-s}e^{-\theta}
\ket{-s}=\delta_{\lambda,\mu}.
\end{equation}
For this, we need the following two lemmas.
\begin{lemma}\label{lemma:a}
If $N>n_1,\dots,n_s$, then
$
\psi^\ast_N
\psi_{n_1}e^{-\theta}
\psi_{n_2}e^{-\theta}
\cdots 
\psi_{n_s}e^{-\theta}
\ket{-s}=0
$.
\end{lemma}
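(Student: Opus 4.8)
The plan is to push the operator $\psi^\ast_N$ to the right through the entire word and to show that, because $N$ exceeds every $n_i$, it can never contract with any $\psi_{n_i}$; it must therefore end up annihilating the shifted vacuum $\ket{-s}$. First I would record the $\psi^\ast$-analogue of Lemma \ref{lemma:e_theta_action_elem}: arguing exactly as for \eqref{eq:e_theta_action}, but using $[a_n,\psi^\ast(z)]=-z^{-n}\psi^\ast(z)$, one obtains $e^{\theta}\psi^\ast(z)e^{-\theta}=(1+\beta z^{-1})^{-1}\psi^\ast(z)$, and hence
\[
\psi^\ast_N e^{-\theta}=e^{-\theta}\sum_{k\geq 0}(-\beta)^k\psi^\ast_{N+k}.
\]
The crucial feature is that conjugating $\psi^\ast_N$ by $e^{-\theta}$ produces only fermions $\psi^\ast_M$ with $M\geq N$: the index never decreases.

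Next I would split into two cases according to the size of $n_s$. If $n_s\leq -s-1$, the rightmost factor already vanishes: since $e^{-\theta}\ket{-s}=\ket{-s}$ (the positive modes $a_n$ annihilate the shifted vacuum, a fact already used in \S\ref{sec:def_of_g}) and $\psi_{n_s}\ket{-s}=0$ whenever $n_s\leq -s-1$, we get $\psi_{n_s}e^{-\theta}\ket{-s}=0$, so the whole expression is $0$. It remains to treat $n_s\geq -s$, and then the hypothesis $N>n_s$ forces $N\geq -s+1$.

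In this remaining case I would move $\psi^\ast_N$ to the right one factor at a time. Passing a factor $\psi_{n_i}$ only introduces a sign, because $\psi^\ast_M\psi_{n_i}=-\psi_{n_i}\psi^\ast_M$ whenever $M\neq n_i$, and at every stage the relevant index satisfies $M\geq N>n_i$; passing a factor $e^{-\theta}$ replaces $\psi^\ast_M$ by $\sum_{k\geq 0}(-\beta)^k\psi^\ast_{M+k}$ via the displayed relation, which again only raises the index. Hence the Kronecker term $\delta_{M,n_i}$ is never triggered, and after traversing all $s$ factors one obtains
\[
\psi^\ast_N\,\psi_{n_1}e^{-\theta}\cdots\psi_{n_s}e^{-\theta}
=(-1)^s\,\psi_{n_1}e^{-\theta}\cdots\psi_{n_s}e^{-\theta}\Big(\sum_{M\geq N}c_M\,\psi^\ast_M\Big)
\]
for suitable $c_M\in k$ (finitely many nonzero at each order in $\beta$). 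Applying this to $\ket{-s}$ and using $\psi^\ast_M\ket{-s}=0$ for all $M\geq -s$ concludes the proof, since every surviving index satisfies $M\geq N\geq -s+1>-s$.

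The only delicate point I anticipate is the bookkeeping behind the claim that the index is monotone under the two elementary moves, so that no contraction ever occurs; once this monotonicity is established, both the vanishing against $\ket{-s}$ (when $N\geq -s$) and the trivial vanishing of the last factor (when $n_s\leq -s-1$) are immediate. I would also remark that the infinite series in $\psi^\ast_N e^{-\theta}$ raises no convergence issue, being graded by powers of $\beta$ with only finitely many terms in each degree.
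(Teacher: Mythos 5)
Your proof is correct, and it is essentially the mirror image of the paper's argument. The paper absorbs the factors $e^{-\theta}$ into the ket (using $e^{-\theta}\ket{-s}=\ket{-s}$ together with $e^{-\theta}\psi_ne^{\theta}=\psi_n-\beta\psi_{n-1}+\beta^2\psi_{n-2}-\cdots$), so the state is rewritten as a combination of vectors $\psi_{n'_1}\cdots\psi_{n'_s}\ket{-s}$ in which every index has only \emph{decreased}, hence stays strictly below $N$; then $\psi^\ast_N$ anticommutes through without contraction and annihilates $\ket{-s}$. You instead push $\psi^\ast_N$ to the right using the companion relation $e^{\theta}\psi^\ast_Ne^{-\theta}=\sum_{k\geq 0}(-\beta)^k\psi^\ast_{N+k}$, under which the starred index only \emph{increases}, so again no contraction with any $\psi_{n_i}$ can occur. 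Both arguments rest on the same monotonicity phenomenon coming from $[a_m,\psi_n]=\psi_{n-m}$ versus $[a_m,\psi^\ast_n]=-\psi^\ast_{n+m}$, so the mathematical content is the same; what your version buys is an explicit treatment of the degenerate situation $N<-s$, where $\psi^\ast_N\ket{-s}\neq 0$ and the final annihilation step would fail --- you dispose of it first via the case $n_s\leq -s-1$, whereas the paper's two-line proof leaves that case implicit (there all $n'_i<-s$, so the state itself already vanishes). Your closing remark on the $\beta$-grading correctly settles the only formal issue raised by the infinite sums, and your observation that $\psi^\ast_M\ket{-s}=0$ exactly for $M\geq -s$ is the right endpoint of the argument.
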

\begin{proof}
As $e^{-\theta}\psi_ne^{\theta}=\psi_n-\beta \psi_{n-1}+\beta^2 \psi_{n-2}-\cdots$, the vector 
$\psi_{n_1}e^{-\theta}
\cdots 
\psi_{n_s}e^{-\theta}
\ket{-s}$
must be a linear combination of vectors of the form 
\[
\psi_{n'_1}\cdots \psi_{n'_s}\ket{-s},\qquad N>n'_1,\dots,n'_s.
\]
Since $[\psi^\ast_m,\psi_n]_+=0$ for $m\neq n$, we obtain the desired result.
\end{proof}
\begin{lemma}\label{lemma:b}
If $M>m_1>\dots>m_r\geq -s$, then
$
\bra{-s}
e^{\theta}\psi^\ast_{m_r}
\dots 
e^{\theta}\psi^\ast_{m_1}
\psi_M=0
$.
\end{lemma}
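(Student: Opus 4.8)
\textit{Proof proposal.} The plan is to commute the rightmost operator $\psi_M$ all the way to the left until it reaches $\bra{-s}$, arranging that every term produced along the way is killed there. The only two ingredients are the anticommutation relations \eqref{eq:free-fermions-relation} and the conjugation rule of Lemma \ref{lemma:e_theta_action_elem}, which I use in the form $e^{\theta}\psi_n=(\psi_n+\beta\psi_{n-1})e^{\theta}$. I would move $\psi_M$ leftward one block $e^{\theta}\psi^\ast_{m_j}$ at a time: crossing $\psi^\ast_{m_j}$ uses $[\psi^\ast_{m_j},\psi_k]_+=\delta_{m_j,k}$, so whenever the moving $\psi$ has index $k\ne m_j$ the passage is a clean sign change $\psi^\ast_{m_j}\psi_k=-\psi_k\psi^\ast_{m_j}$; crossing $e^{\theta}$ replaces each $\psi_k$ by $\psi_k+\beta\psi_{k-1}$, spawning strictly lower indices while the $e^{\theta}$ itself stays in place (so I never need to evaluate $e^{\theta}$ on a vacuum).

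The substance of the argument is the index bookkeeping. After $\psi_M$ has crossed the first $j-1$ blocks it has become a linear combination of operators $\psi_k$ with $M-j+1\le k\le M$. The strict chain $M>m_1>\dots>m_r$ telescopes to $M\ge m_j+j$, so the smallest such index satisfies $M-j+1\ge m_j+1>m_j$; hence no contraction term $\delta_{m_j,k}$ is ever created, and $\psi_M$ together with all its descendants anticommutes freely past every $\psi^\ast_{m_j}$. Carrying this through all $r$ blocks and the leftmost $e^{\theta}$, the expression reduces to $\pm\bra{-s}$ times a linear combination of $\psi_k$ with $M-r\le k\le M$, followed by the untouched $e^{\theta}\psi^\ast_{m_r}\cdots e^{\theta}\psi^\ast_{m_1}$. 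Now $m_r\ge -s$ forces $M\ge m_r+r\ge r-s$, so every surviving index obeys $k\ge M-r\ge -s$. Since $\bra{-s}=\bra{0}\psi_{-1}\cdots\psi_{-s}$, one has $\bra{-s}\psi_k=0$ for all $k\ge -s$ (anticommute to $\bra{0}\psi_k=0$ when $k\ge 0$, and use $\psi_k^2=0$ when $-s\le k\le -1$); thus every term vanishes and the whole expression is $0$.

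The main obstacle is purely this bookkeeping: one must simultaneously guarantee that (i) no delta-contraction appears while anticommuting $\psi_M$ and its lower-index descendants past each $\psi^\ast_{m_j}$, and (ii) the lowest descendant index generated after crossing all $r$ copies of $e^{\theta}$ still lies in the range absorbed by $\bra{-s}$. Both reduce to converting the hypothesis $M>m_1>\dots>m_r\ge -s$ into the two quantitative bounds $M-j+1>m_j$ for each $j$ and $M-r\ge -s$; once these are recorded the computation is routine. As an alternative route, applying the anti-involution ${}^\ast$ (under which $\theta^\ast=\Theta$) together with $\omega$ turns the claim into the assertion $\psi^\ast_M\psi_{m_1}e^{\Theta}\psi_{m_2}e^{\Theta}\cdots\psi_{m_r}e^{\Theta}\ket{-s}=0$, the $\Theta$-analogue of Lemma \ref{lemma:a}; but that still demands an essentially identical index analysis, so the direct commutation above is the most transparent path.
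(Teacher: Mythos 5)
Your argument is correct and is essentially the paper's proof: the same two ingredients ($e^{\theta}\psi_M e^{-\theta}=\psi_M+\beta\psi_{M-1}$ and the absence of contractions because $M-j+1>m_j$, together with $\bra{-s}\psi_k=0$ for $k\geq -s$) drive both. The paper merely packages your explicit one-pass bookkeeping as an induction on $r$, peeling off one block $e^{\theta}\psi^\ast_{m_1}$ at a time, with your final vanishing step $\bra{-s}\psi_M=0$ serving as its base case.
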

\begin{proof}
We prove by induction on $r\geq 0$.
If $r=0$ and $M\geq -s$, the equation $\bra{-s}\psi_M=0$ is obvious.
Next assume $r\geq 1$.
Since $e^{\theta}\psi^\ast_{m_1}\psi_M=-(\psi_M+\beta \psi_{M-1})e^{\theta}\psi^\ast_{m_1}$, we have
\begin{align*}
\bra{-s}
e^{\theta}\psi^\ast_{m_r}
\cdots 
e^{\theta}\psi^\ast_{m_2}
e^{\theta}\psi^\ast_{m_1}
\psi_M
=
-\bra{-s}
e^{\theta}\psi^\ast_{m_r}
\cdots 
e^{\theta}\psi^\ast_{m_2}
(\psi_M+\beta \psi_{M-1})e^{\theta}\psi^\ast_{m_1}.
\end{align*}
Because $M-1>m_2$, this equals to $0$ by induction hypothesis.
\end{proof}

\begin{proof}[Proof of Proposition \ref{prop:duality}]
Let
\[
C:=
\bra{-r}
e^{\theta}\psi^\ast_{\lambda_{r}-r}
\cdots
e^{\theta}\psi^\ast_{\lambda_{2}-2}
e^{\theta}\psi^\ast_{\lambda_{1}-1}
\psi_{\mu_1-1}e^{-\theta}
\psi_{\mu_2-2}e^{-\theta}
\cdots
\psi_{\mu_s-s}e^{-\theta}
\ket{-s}.
\]
If $\lambda_1>\mu_1$, then $C=0$ from Lemma \ref{lemma:a}.
If $\lambda_1<\mu_1$, then $C=0$ from Lemma \ref{lemma:b}.
Assume $\lambda_1=\mu_1$.
Since 
$
\psi^\ast_{\lambda_{1}-1}
\psi_{\mu_1-1}
=1-
\psi_{\mu_1-1}
\psi^\ast_{\lambda_{1}-1}
$, $C$ is rewritten as
\[
C=\bra{-r}
e^{\theta}\psi^\ast_{\lambda_{r}-r}
\cdots
e^{\theta}\psi^\ast_{\lambda_{2}-2}
\psi_{\mu_2-2}e^{-\theta}
\cdots
\psi_{\mu_s-s}e^{-\theta}
\ket{-s}
\]
by using Lemma \ref{lemma:a} again.
Repeating this procedure, we conclude that $C=\delta_{\lambda,\mu}$.
\end{proof}


\subsection{Determinant formula for $g_\lambda(x)$}\label{sec:determinant_g}

\begin{prop}[\cite{lascoux2014finite,shimozono2011}]
\label{prop:determinant_g}
We have
\[
g_\lambda(x)=\det
\left(
\sum_{m=0}^{\infty}\binom{1-i}{m}\beta^m h_{\lambda_i-i+j-m}(x)
\right)_{1\leq i,j\leq r}.
\]
\end{prop}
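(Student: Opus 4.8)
The plan is to mirror, almost verbatim, the proof of Proposition~\ref{prop:G_and_Gr}, replacing $e^{\Theta}$ by $e^{-\theta}$ throughout. Introduce the generating function
\[
\overline{g}(z_1,\dots,z_r):=
\bra{0}e^{H(x)}\psi(z_1)e^{-\theta}\psi(z_2)e^{-\theta}\cdots\psi(z_r)e^{-\theta}\ket{-r},
\]
whose coefficient of $z_1^{\lambda_1-1}z_2^{\lambda_2-2}\cdots z_r^{\lambda_r-r}$ is exactly $g_\lambda(x)$. First I would set $B_i:=e^{-(i-1)\theta}\psi(z_i)e^{(i-1)\theta}$, which by iterating the second identity in \eqref{eq:e_theta_action} equals $(1+\beta z_i)^{-(i-1)}\psi(z_i)$. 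A telescoping computation then gives $\psi(z_1)e^{-\theta}\cdots\psi(z_r)e^{-\theta}=B_1B_2\cdots B_r\,e^{-r\theta}$, and since $\theta$ is built from the positive modes $a_1,a_2,\dots$, it annihilates the shifted vacuum, so $e^{-r\theta}\ket{-r}=\ket{-r}$ (this is the same fact that makes $g_\lambda(x)$ independent of $r$ in \S\ref{sec:def_of_g}). Hence $\overline{g}(z_1,\dots,z_r)=\bra{0}e^{H(x)}B_1\cdots B_r\ket{-r}$.

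Next I would conjugate each $B_i$ across $e^{H(x)}$ and apply Wick's theorem (Theorem~\ref{thm:Wick}), exactly as in Proposition~\ref{prop:G_and_Gr}, to obtain
\[
\overline{g}(z_1,\dots,z_r)=\det\!\left(\bra{0}e^{H(x)}B_i e^{-H(x)}\psi^\ast_{-j}\ket{0}\right)_{1\le i,j\le r}.
\]
Lemma~\ref{lemma:e_theta_action_elem_end} yields $e^{H(x)}B_i e^{-H(x)}=(1+\beta z_i)^{-(i-1)}\bigl(\sum_{m\ge0}h_m(x)z_i^m\bigr)\psi(z_i)$, and together with $\bra{0}\psi(z_i)\psi^\ast_{-j}\ket{0}=z_i^{-j}$ this gives
\[
\overline{g}(z_1,\dots,z_r)=\det\!\left((1+\beta z_i)^{-(i-1)}\Bigl(\textstyle\sum_{m\ge0}h_m(x)z_i^m\Bigr)z_i^{-j}\right)_{1\le i,j\le r}.
\]
Each matrix entry depends only on its row variable $z_i$, so extracting the coefficient of $z_1^{\lambda_1-1}\cdots z_r^{\lambda_r-r}$ commutes with the determinant expansion and reduces to taking, entrywise, the coefficient of $z_i^{\lambda_i-i}$. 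Expanding $(1+\beta z_i)^{-(i-1)}=\sum_{m\ge0}\binom{1-i}{m}\beta^m z_i^m$ and collecting powers then produces precisely the $(i,j)$ entry $\sum_{m\ge0}\binom{1-i}{m}\beta^m h_{\lambda_i-i+j-m}(x)$, which is the claimed determinant.

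I expect no deep obstacle; the proof is structurally identical to that of Proposition~\ref{prop:G_and_Gr}, and the only genuine care points are bookkeeping ones arising from passing from $e^{\Theta}$ to $e^{-\theta}$: the conjugation factor becomes $(1+\beta z_i)^{-(i-1)}$ (note the minus sign and the argument $1+\beta z_i$ rather than $1+\beta z_i^{-1}$), the leftover exponential appears as $e^{-r\theta}$ rather than being built into the definition, and one must verify the elementary identity $\binom{-(i-1)}{m}=\binom{1-i}{m}$ so that the negative-exponent binomial expansion matches the asserted coefficients. Confirming that $e^{-r\theta}$ fixes $\ket{-r}$, so that it may be discarded before applying Wick's theorem, is the one point I would state explicitly rather than treat as routine.
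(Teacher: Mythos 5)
Your proposal is correct and follows essentially the same route as the paper's own proof: the same generating function $\Phi(z_1,\dots,z_r)$, the same conjugated operators $D_i=e^{-(i-1)\theta}\psi(z_i)e^{(i-1)\theta}=(1+\beta z_i)^{-(i-1)}\psi(z_i)$, Wick's theorem, and coefficient extraction. The only cosmetic difference is that you make the telescoping factor $e^{-r\theta}$ and the identity $e^{-r\theta}\ket{-r}=\ket{-r}$ explicit, where the paper simply invokes $e^{\theta}\ket{-r}=\ket{-r}$.
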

\begin{proof}
Let
\[
\Phi(z_1,\dots,z_r)=
\bra{0}e^{H(x)}
\psi(z_1)e^{-\theta}
\psi(z_2)e^{-\theta}
\cdots
\psi(z_r)e^{-\theta}
\ket{-r}.
\]
We put $D_i:=e^{-(i-1)\theta}\psi(z_i)e^{(i-1)\theta}
=(1+\beta z_i)^{-(i-1)}\psi(z_i)
$.
Since $e^{\theta}\ket{-r}=\ket{-r}$, we have
\begin{align*}
\Phi(z_1,\dots,z_r)
&=\bra{0}e^{H(x)}D_1D_2\dots D_r\ket{-r}\\
&=
\det
(
\bra{0}
e^{H(x)}
D_i
e^{-H(x)}
\psi^\ast_{-j}
\ket{0}
)_{1\leq i,j\leq r}\\
&=
\det
\left(
(1+\beta z_i)^{-(i-1)}
(\textstyle \sum_{m=0}^{\infty}h_m(x)z_i^m)
\bra{0}
\psi(z_i)
\psi^\ast_{-j}
\ket{0}
\right)_{1\leq i,j\leq r}\\
&=
\det
\left(
(1+\beta z_i)^{-(i-1)}
(\textstyle \sum_{m=0}^{\infty}h_m(x)z_i^m)
z_i^{-j}
\right)_{1\leq i,j\leq r}.
\end{align*}
Comparing the coefficients of $z_1^{\lambda_1-1}\cdots z_r^{\lambda_r-r}$ on the both sides gives the desired expression.
\end{proof}

\section{Application 1: $G_\lambda(x)$-expansion of symmetric functions}\label{sec:app1}

In the following two sections, we present a new method of deriving Pieri type formulas for $K$-theoretic polynomials.
We will define an action of non-commutative Schur polynomials~\cite{fomin1998noncommutative} on Grothendieck polynomials and dual stable Grothendieck polynomials by using their free-fermionic presentations.
This enables us to express symmetric functions of the form $s_\lambda(x)G_\mu(x)$ (\textit{resp.}~$s_\lambda(x)g_\mu(x)$) as a linear combination of Grothendieck polynomials (\textit{resp.}~dual stable Grothendieck polynomials).

\subsection{$\beta$-twisted Schur operators}
Let 
\[
\mathfrak{X}:=\bigoplus_{\lambda}\QQ[\beta]\cdot \lambda
\]
be the $\QQ[\beta]$-module freely generated by all partitions $\lambda$.
We define a linear operator
$
u_i:\mathfrak{X}\to \mathfrak{X}
$, ($i>0$), which we will call a \textit{$\beta$-twisted Schur operator}.
For any sequence $n=(n_1,\dots,n_\ell)$, we let $\overline{n}=(\overline{n}_1,\dots,\overline{n}_\ell)$ denote the smallest partition that satisfies $n_i\leq \overline{n}_i$ for all $i$.
We have $\overline{n}_i=\max[n_i,n_{i+1},\dots,n_\ell]$.

We write $\ee_i=(0,\dots,\stackrel{\stackrel{i}{\vee}}{1},\dots,0)$.
\begin{defi}
Let $u_i:\mathfrak{X}\to \mathfrak{X}$ be the linear operator that acts on a partition $\lambda$ as
\begin{gather*}
u_i\cdot \lambda=(-\beta)^{\zet{\overline{\lambda+\ee_i}}-\zet{\lambda+\ee_i} }\cdot \overline{\lambda+\ee_i}.
\end{gather*}
\end{defi}
\begin{example}
\[
u_1\cdot \Tableau{ & \\ &}=\Tableau{ & & \\  &},\quad
u_2\cdot \Tableau{ & \\ &}=-\beta \cdot \Tableau{ & & \\ & &},\quad
u_3\cdot \Tableau{ & \\ &}=\Tableau{ & \\ & \\ \\}.
\]
\end{example}
\begin{example}
Since $\overline{(\overline{\lambda+\ee_i})+\ee_j}=\overline{\lambda+\ee_i+\ee_j}$ for $i<j$, the action of operators of the form $u_{i_1}\cdots u_{i_r}$ $(i_1>\cdots>i_r)$ is expressed as
\begin{equation}\label{eq:uuu}
u_{i_1}\cdots u_{i_r}\cdot \lambda=
(-\beta)^{\zet{\overline{\lambda+\ee_{i_1}+\cdots+\ee_{i_r} }}-
\zet{\lambda
{\color{red}+\ee_{i_1}+\cdots+\ee_{i_r}}
}}
\cdot
\overline{\lambda+\ee_{i_1}+\cdots+\ee_{i_r}}.
\end{equation}
\end{example}

\begin{lemma}\label{lemma:knuth}
The $\beta$-twisted Schur operators satisfy the following commutative relations.
\begin{eqnarray}\label{eq:Knuth}
\begin{gathered}
u_iu_ku_j=u_ku_iu_j,\quad i\leq j<k,\\
u_ju_iu_k=u_ju_ku_i,\quad i<j\leq k.
\end{gathered}
\end{eqnarray}
\end{lemma}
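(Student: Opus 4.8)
The plan is to remove the powers of $(-\beta)$ first, reducing both relations to sign-free statements about a single ``add-a-box-and-straighten'' map, and then to verify those statements by analysing how the straightening cascades interact. Write $S_m(\nu):=\overline{\nu+\ee_m}$ for the smallest partition dominating $\nu+\ee_m$, so that by definition $u_m\cdot\nu=(-\beta)^{\,\zet{S_m(\nu)}-\zet{\nu}-1}S_m(\nu)$. An immediate induction on $t$ (the exponent telescopes, each factor contributing $\zet{S_m(\cdot)}-\zet{\cdot}-1$) gives
\[
u_{m_1}\cdots u_{m_t}\cdot\lambda=(-\beta)^{\,\zet{S_{m_1}\cdots S_{m_t}(\lambda)}-\zet{\lambda}-t}\cdot S_{m_1}\cdots S_{m_t}(\lambda).
\]
Since the two sides of each relation in \eqref{eq:Knuth} share the same $\lambda$ and the same number of factors ($t=3$), the scalar in front depends only on the resulting partition, so it suffices to prove the sign-free identities $S_iS_kS_j(\lambda)=S_kS_iS_j(\lambda)$ for $i\le j<k$ and $S_jS_iS_k(\lambda)=S_jS_kS_i(\lambda)$ for $i<j\le k$. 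For these I would use two lemmas. First, the straightening identity $\overline{\overline{\lambda+\ee_a}+\ee_b}=\overline{\lambda+\ee_a+\ee_b}$ for $a<b$, already used to derive \eqref{eq:uuu}, reads $S_bS_a(\nu)=\overline{\nu+\ee_a+\ee_b}$: adding the two boxes ``higher row first'' gives the plain add-both-and-straighten. Second, an independence lemma: if $a<b$ and $\nu_a>\nu_b$, then the two additions trigger cascades occupying the disjoint row-intervals $[p_a,a]$ and $[p_b,b]$ (with $p_m$ the top of the plateau of $\nu$ through row $m$), so that $S_aS_b(\nu)=\overline{\nu+\ee_a+\ee_b}$ as well and the two orders agree.

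For the first relation I would set $\nu=S_j(\lambda)$. Because $i<k$, the right-hand order is covered by the first lemma: $S_kS_i(\nu)=\overline{\nu+\ee_i+\ee_k}$. For the left-hand order I apply the independence lemma, which requires $\nu_i>\nu_k$. Now $k>j$ means row $k$ is untouched by $S_j$, so $\nu_k=\lambda_k$, while $\lambda_i\ge\lambda_k$; if $\lambda_i>\lambda_k$ the inequality is immediate, and if $\lambda_i=\lambda_k$ then rows $i,\dots,k$ form a single plateau of $\lambda$ containing row $j$, so $S_j$ raises row $i$ (this is exactly where $i\le j$ is used) and $\nu_i=\lambda_i+1>\lambda_k=\nu_k$. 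Hence both orders equal $\overline{\nu+\ee_i+\ee_k}$.

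The second relation is the genuinely different case, because the outer operator is shared: I must show $S_j(X)=S_j(Y)$ where $X=S_iS_k(\lambda)$ and $Y=S_kS_i(\lambda)=\overline{\lambda+\ee_i+\ee_k}$. If $\lambda_i>\lambda_k$ the independence lemma yields $X=Y$ and there is nothing to prove. The essential case is $\lambda_i=\lambda_k$: then rows $i,\dots,k$ lie in a single plateau $[P,Q]$ of $\lambda$ of height $c$ with $P\le i<j\le k\le Q$, and an explicit straightening shows that $Y$ has rows $[P,k]$ equal to $c+1$, whereas $X$ has rows $[P,i]$ equal to $c+2$ and rows $[i+1,k]$ equal to $c+1$ (both having rows $[k+1,Q]$ equal to $c$). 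Applying $S_j$, whose added box lands in a height-$(c+1)$ run and whose cascade reaches up to row $i$ precisely because $j>i$, raises rows $[P,j]$ to $c+2$ in both $X$ and $Y$; the two results then coincide (rows $[P,j]=c+2$, $[j+1,k]=c+1$, $[k+1,Q]=c$), which proves the relation.

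The main obstacle is this last step. Unlike the first relation, the two triple products are genuinely distinct partitions before the outer $S_j$ is applied, so one cannot argue through a common normal form and must instead track the plateau heights and verify that the final straightening merges the discrepancy; this is exactly where the strict inequality $i<j$ (so that $S_j$ reaches row $i$) and $j\le k$ (so that row $j$ lies inside the relevant height-$(c+1)$ run) enter. A secondary, purely organisational difficulty is to make the case distinctions ($\lambda_i>\lambda_k$ versus $\lambda_i=\lambda_k$, together with the analogous subcase in the first relation) manifestly exhaustive, and to give a clean proof of the independence lemma from the disjointness of the two cascade intervals.
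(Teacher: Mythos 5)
The paper offers no real argument here---its ``proof'' is the single sentence that the relations are directly checked on the basis---so your proposal is supplying the verification rather than competing with one, and its architecture is sound. The reduction to the sign-free maps $S_m(\nu)=\overline{\nu+\ee_m}$ via the telescoping exponent is correct; both auxiliary lemmas (the straightening identity $S_bS_a(\nu)=\overline{\nu+\ee_a+\ee_b}$ for $a<b$, and the disjoint-cascade lemma when $\nu_a>\nu_b$) are true; and the first relation, including the check that $\nu_i>\nu_k$ for $\nu=S_j(\lambda)$, is handled completely.

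There is, however, a concrete error in the explicit straightening in the essential case $\lambda_i=\lambda_k$ of the second relation. A single application of $S_m$ raises the \emph{maximal} constant run of the current partition at height $\nu_m$ through row $m$, and after $S_k$ has lifted rows $[P,k]$ to $c+1$ this run can merge with rows of $\lambda$ \emph{above} $P$ that already had height $c+1$. Take $\lambda=(3,2,2,2)$ and $(i,j,k)=(2,3,4)$, so $c=2$, $[P,Q]=[2,4]$, $\lambda_{P-1}=c+1$. Then $Y=S_kS_i(\lambda)=(3,3,3,3)$ as you describe, but $X=S_iS_k(\lambda)=S_2\bigl((3,3,3,3)\bigr)=(4,4,3,3)$, not $(3,4,3,3)$: the $S_2$-cascade runs up into row $1$. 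Here $S_j(X)=S_j(Y)=(4,4,4,3)$, whereas your claimed common value (rows $[P,j]$ at $c+2$, $[j+1,k]$ at $c+1$, rows above $P$ untouched) would be $(3,4,4,3)$, which is not even a partition. The desired equality $S_j(X)=S_j(Y)$ does still hold, but as written the step asserts false intermediate identities. The repair is local: let $P_1\le P$ be the top of the maximal run of $S_k(\lambda)$ at height $c+1$ (so $P_1<P$ exactly when $\lambda_{P-1}=c+1$). Then $X$ has rows $[P_1,i]$ at $c+2$ and $[i+1,k]$ at $c+1$, while $Y$ has rows $[P,k]$ at $c+1$ as you say; the $S_j$-cascade runs up to $P_1$ on $Y$ but only to $i+1$ on $X$ (being blocked by the height-$(c+2)$ rows), and both outputs have rows $[P_1,j]$ at $c+2$, $[j+1,k]$ at $c+1$, and $[k+1,Q]$ at $c$, so the relation follows.
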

\begin{proof}
They are directly checked by seeing their actions on the basis.
\end{proof}
Equation (\ref{eq:Knuth}) in Lemma \ref{lemma:knuth} are often called the \textit{Knuth relation}.
It was proved by Fomin and Greene~\cite{fomin1998noncommutative} that the theory of non-commutative Schur functions is applicable to any set of operators that satisfies the Knuth relation.

\subsection{Non-commutative Schur functions}

Let $T$ be a \textit{semi-standard tableau}, or a \textit{tableau} \cite{fulton_1996}.
The \textit{column word} $w_T$ of $T$ is the sequence of numbers obtained by reading the entries of $T$ from bottom to top in each column, starting in the left column and moving to the right.
For example, $w_T=3215344$ for $T=\Tableau{1&3&4&4\\2&5\\3}$.
We define the monomial $u^T$ 
as
\[
u^T:=u_{w_T(1)}u_{w_T(2)}\dots u_{w_T(N)}.
\]

\begin{defi}[Non-commutative Schur function]\label{defi:noncommutativeSchur}
For a partition $\lambda$, we define $s_\lambda(u_1,\dots,u_n)$ by
\begin{gather*}
s_\lambda(u_1,\dots,u_n):=\sum_{
\substack{
T\, \mathrm{is}\,\mathrm{of}\, \mathrm{shape}\, \lambda,\\ 
\mathrm{Each}\, \mathrm{entry}\, \mathrm{of}\, T\, \mathrm{is}\, \leq n.}
}
{\hspace{-15pt}u^T}.
\end{gather*}
\end{defi}

If $\lambda=(1^i)=(\overbrace{1,\dots,1}^i)$, the operator $e_i(u_1,\dots,u_n):=s_{(1^i)}(u_1,\dots,u_n)$ is called the \textit{$i$-th non-commutative elementary symmetric polynomial}.
If $\lambda=(i)$, $h_i(u_1,\dots,u_n):=s_{(i)}(u_1,\dots,u_n)$ is called the \textit{$i$-th non-commutative complete symmetric polynomial}.

The following proposition is given by Fomin-Greene~\cite{fomin1998noncommutative}.
\begin{prop}[Fundamental properties of non-commutative Schur functions]\label{prop:noncommutativeSchur}
Let $u_1,\dots,u_n$ be a set of non-commutative operators with the Knuth relation $(\ref{eq:Knuth})$.
Let $\Lambda_n(u)$ denote the ring of non-commutative Schur functions in $u_1,\dots,u_n$.
Then $\Lambda_n(u)$ is commutative; that is, we have 
\begin{gather}
s_\lambda(u_1,\dots,u_n)s_\mu(u_1,\dots,u_n)
=
s_\mu(u_1,\dots,u_n)s_\lambda(u_1,\dots,u_n)
\end{gather}
for any $\lambda,\mu$.
Moreover, the $($commutative$)$ ring $\Lambda_n(u)$ is generated by all non-commutative elementary polynomials $e_i(u_1,\dots,u_n)$ for $i=0,1,\dots,n$.
As a polynomial of $e_i(u_1,\dots,u_n)$'s, the non-commutative Schur polynomial $s_\lambda(u_1,\dots,u_n)$ is expressed as
\[
s_\lambda(u_1,\dots,u_n)=\det(e_{\lambda_i-i+j}(u_1,\dots,u_n))_{1\leq i,j\leq r},\quad
\ell(\lambda)\leq n\leq r,
\]
which is exactly same as the Jacobi-Trudi formula for ordinal Schur polynomials.
\end{prop}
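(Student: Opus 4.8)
The plan is to recognise the relations $(\ref{eq:Knuth})$ as the defining relations of the plactic monoid and then to transport the entire structure of the ring of symmetric functions through the resulting representation. First I would observe that the two families in $(\ref{eq:Knuth})$ are exactly the elementary Knuth (plactic) transformations: the relation $u_iu_ku_j=u_ku_iu_j$ for $i\le j<k$ encodes the move $acb\equiv cab$ ($a\le b<c$), and $u_ju_iu_k=u_ju_ku_i$ for $i<j\le k$ encodes $bac\equiv bca$ ($a<b\le c$). Hence, writing $u_w=u_{w(1)}\cdots u_{w(N)}$ for a word $w$, Lemma $\ref{lemma:knuth}$ says precisely that $u_w=u_{w'}$ whenever $w$ and $w'$ are Knuth-equivalent. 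Consequently $w\mapsto u_w$ factors through the plactic monoid and extends to a $\QQ[\beta]$-algebra homomorphism $\pi$ from the plactic monoid ring into $\mathrm{End}_{\QQ[\beta]}(\mathfrak{X})$. Since the column word $w_T$ of a tableau $T$ inserts, via the Robinson--Schensted--Knuth correspondence, back to $T$ itself, distinct tableaux lie in distinct Knuth classes, and by Definition $\ref{defi:noncommutativeSchur}$ we have $s_\lambda(u_1,\dots,u_n)=\pi\big(\sum_{T}[w_T]\big)$, the sum being over tableaux of shape $\lambda$ with entries $\le n$.

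Next I would invoke the theorem of Lascoux and Sch\"utzenberger on the plactic monoid: the elements $\mathbf{S}_\lambda=\sum_{T}[w_T]$ (sum over tableaux of shape $\lambda$ with entries $\le n$) span a commutative subalgebra of the plactic monoid ring, isomorphic to the ring of symmetric polynomials in $n$ variables, with $\mathbf{S}_\lambda\mathbf{S}_\mu=\sum_\nu c^\nu_{\lambda\mu}\mathbf{S}_\nu$ for the Littlewood--Richardson coefficients $c^\nu_{\lambda\mu}$. Applying $\pi$ then gives commutativity of $\Lambda_n(u)$ at once, since $\pi$ carries $\mathbf{S}_\lambda$ to $s_\lambda(u_1,\dots,u_n)$. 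Generation by the operators $e_i(u_1,\dots,u_n)=\pi(\mathbf{S}_{(1^i)})$ and the stated determinantal identity are then inherited verbatim from the corresponding identities in the commutative ring of symmetric polynomials: both are universal polynomial relations among Schur, elementary, and complete homogeneous symmetric functions, valid in the abstract ring of symmetric functions, and hence preserved under the algebra homomorphism $\pi$.

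The essential content, and the only genuine obstacle, is the commutativity underlying the Lascoux--Sch\"utzenberger theorem; everything afterwards is a formal transport of identities. If one prefers a self-contained argument in place of the citation, I would prove the equivalent statement that the generating series $E(t)=\sum_{k\ge 0}e_k(u)t^k=(1+tu_n)(1+tu_{n-1})\cdots(1+tu_1)$ satisfy $E(s)E(t)=E(t)E(s)$. This I would establish by induction on $n$, comparing the coefficient of each monomial $s^at^b$ and repeatedly applying $(\ref{eq:Knuth})$ to straighten the decreasing products $u_{j_1}\cdots u_{j_a}u_{k_1}\cdots u_{k_b}$ into a symmetric normal form; the Knuth relations are exactly what is needed to interchange an $s$-factor past a $t$-factor, and the hard part will be bookkeeping the induction so that each interchange is licensed by one of the two relations. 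Once $E(s)E(t)=E(t)E(s)$ is known, the $e_i(u)$ generate a commutative algebra, and the Jacobi--Trudi determinant together with the fact that it computes $s_\lambda(u)$ follow from the standard straightening of the determinant, exactly as in the commutative theory.
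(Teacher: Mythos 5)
The paper does not actually prove this proposition: it is stated with an attribution to Fomin--Greene and used as a black box, so any argument you supply is ``different'' by default. Your main route is correct. The relations (\ref{eq:Knuth}) are precisely the elementary Knuth transformations, the column word of a tableau RSK-inserts back to that tableau (so distinct tableaux lie in distinct plactic classes), hence $s_\lambda(u_1,\dots,u_n)=\pi(\mathbf{S}_\lambda)$ for the plactic Schur element $\mathbf{S}_\lambda$, and the Lascoux--Sch\"utzenberger product rule $\mathbf{S}_\lambda\mathbf{S}_\mu=\sum_\nu c^\nu_{\lambda\mu}\mathbf{S}_\nu$ (with $\mathbf{S}_\nu=0$ when $\ell(\nu)>n$) identifies the span of the $\mathbf{S}_\lambda$ with the ring of symmetric polynomials in $n$ commuting variables; commutativity, generation by the $e_i(u)$, and the determinantal expression then transport along the algebra homomorphism $\pi$. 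This buys the full Littlewood--Richardson structure constants, whereas Fomin--Greene's own proof --- which is essentially your ``self-contained alternative,'' commuting the series $E(t)=(1+tu_n)\cdots(1+tu_1)$ --- works under hypotheses strictly weaker than the full Knuth relations and avoids the plactic machinery. Two cautions. First, transport from the commutative ring yields $s_\lambda(u)=\det\bigl(e_{\lambda'_i-i+j}(u)\bigr)$ with the \emph{conjugate} partition; the formula as printed in the proposition (with $\lambda_i$) is inconsistent with the commutative dual Jacobi--Trudi identity and is presumably a typo --- harmless for the later applications, which only need that $s_\lambda(u)$ is some fixed polynomial in the $e_i(u)$ --- but be aware that your argument establishes the conjugated form, not the literal one. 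Second, the self-contained branch is only a sketch: both the commutation $E(s)E(t)=E(t)E(s)$ and, more seriously, the identification of the tableau sum of Definition \ref{defi:noncommutativeSchur} with the determinant in the $e_i(u)$ constitute the actual content of Fomin--Greene's paper and do not follow from ``standard straightening''; if you take that route you must carry out that combinatorics or cite it, so your proof should rest on the plactic argument, which is complete modulo the classical Lascoux--Sch\"utzenberger theorem.
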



\subsection{$u_n$-action on Grothendieck polynomials}

Let $\pi:\mathfrak{X}\to \widehat{\Lambda}$ is the $\QQ[\beta]$-linear map that sends a partition $\lambda$ to $G_\lambda(x)$.
The following proposition provides an algorithm to express a product $s_\lambda(x)G_\mu(x)$ as a linear combination of Grothendieck polynomials.
\begin{prop}\label{prop:product_sG}
For $\ell(\lambda),\ell(\mu)\leq r$, the equation
\begin{gather*}
s_\lambda(x)G_\mu(x)\equiv 
\pi(s_\lambda(u_1,\dots,u_r)\cdot \mu)
\mod{M_{r+1}}
\end{gather*}
holds.
In other words, we have
\[
s_\lambda(x_1,\dots,x_n)G_\mu(x_1,\dots,x_n)
=
\pi(s_\lambda(u_1,\dots,u_r)\cdot \mu)\vert_{x_{n+1}=x_{n+2}=\cdots=0}
\]
for $\ell(\lambda),\ell(\mu)\leq n\leq r$.
\end{prop}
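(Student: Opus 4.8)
The plan is to connect the combinatorial action of the operators $u_i$ on partitions with the multiplication by $s_\lambda(x)$ on the free-fermionic side, using Theorem~\ref{thm:main} as the dictionary. First I would recall from Theorem~\ref{thm:main} that $G_\mu(x)=\bra{0}e^{H(x)}\psi_{\mu_1-1}e^{\Theta}\cdots\psi_{\mu_r-r}e^{\Theta}\ket{-r}$ (padding $\mu$ with zeros up to length $r$), and observe that the right-hand side is, up to the sign-and-straightening factor, exactly $\bra{0}e^{H(x)}X(\mu+\ee)\ket{-r}$ in the notation of Corollary~\ref{cor:X(n)}, where $\ee=(1,2,\dots,r)$ shifts indices. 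The key structural input is Corollary~\ref{cor:X(n)}, which tells us that inserting a single $\psi_{(\mu_i+1)-i}$ (i.e.\ applying $u_i$ to the partition) into this product and then straightening via the square-zero relations $\psi_n^2=0$ produces precisely the factor $(-\beta)^{\zet{\overline{\mu+\ee_i}}-\zet{\mu+\ee_i}}$ times the vector for the straightened partition $\overline{\mu+\ee_i}$. This is exactly the defining formula for $u_i\cdot\mu$, so that $\pi(u_i\cdot\mu)$ corresponds on the fermionic side to the bumped-and-straightened expression.

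The central computation is then to show that multiplication by the Schur function $s_\lambda(x)$ can be realized, modulo $M_{r+1}$, by the action of the non-commutative Schur operator $s_\lambda(u_1,\dots,u_r)$. The natural route is to first handle the generators: I would show that $h_i(x)\cdot G_\mu(x)$ (or $e_i(x)\cdot G_\mu(x)$) corresponds, modulo $M_{r+1}$, to $\pi(h_i(u_1,\dots,u_r)\cdot\mu)$ (resp.\ $\pi(e_i(u_1,\dots,u_r)\cdot\mu)$). On the fermionic side, multiplying by $h_i(x)$ amounts, via Lemma~\ref{lemma:e_theta_action_elem_end} or the relation \eqref{eq:exp_H_psi}, to inserting an extra $h_i(x)$ into the vacuum expectation value, and this can be re-expressed as a sum over ways of inserting fermion operators $\psi$ that exactly matches the combinatorial sum defining the non-commutative symmetric polynomial acting on $\mu$. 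The Knuth relations of Lemma~\ref{lemma:knuth} together with Proposition~\ref{prop:noncommutativeSchur} then guarantee that the operators $s_\lambda(u_1,\dots,u_r)$ assemble consistently from the generators via the Jacobi--Trudi determinant, so that once the generator case is established the general $s_\lambda$ case follows from the commutativity and the determinantal expression in Proposition~\ref{prop:noncommutativeSchur}.

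The role of the modulus $M_{r+1}$ deserves care, and this is where I expect the main obstacle to lie. The exact fermionic identity between $\oG$-type expressions involves the full operators $e^{\Theta}$, but the combinatorial operators $u_i$ act on partitions of length at most $r$; the discrepancy between $e^{r\Theta}$ (or $e^{\ell\Theta}$) and the truncated action is precisely what is controlled by Proposition~\ref{prop:near_criterion}, which says $\bra{0}e^{H(x)}\psi_{m_1}\cdots\psi_{m_r}(e^{s\Theta}-1)\ket{-r}\in M_{r+1}$. Thus I would argue that each insertion step, when performed inside the length-$r$ window, agrees with the exact Grothendieck polynomial only up to $M_{r+1}$, and the accumulated error stays in $M_{r+1}$ because $M_{r+1}$ is an ideal-like subspace stable under the relevant operations. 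The hardest part is to make the bookkeeping of these errors uniform across the whole determinant expansion of $s_\lambda$ in the generators, ensuring that straightening a partition that temporarily exceeds length $r$ does not introduce contributions outside $M_{r+1}$; this is handled by choosing $r\geq\ell(\lambda),\ell(\mu)$ large enough and invoking Lemma~\ref{lemma:small_polynomial_1} to kill any term whose fermionic expression lands in $M_{r+1}$.

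Finally, I would translate the congruence modulo $M_{r+1}$ into the stated substitution identity. By the characterization $f(x)\in M_{n+1}\iff f(x_1,\dots,x_n,0,0,\dots)=0$ from \S\ref{sec:completion}, a congruence $\mod M_{r+1}$ is equivalent to equality after setting $x_{n+1}=x_{n+2}=\cdots=0$ for any $n\leq r$; since $s_\lambda(x)G_\mu(x)$ restricts to the genuine polynomial product $s_\lambda(x_1,\dots,x_n)G_\mu(x_1,\dots,x_n)$, the second displayed equation in the proposition follows immediately from the first. This last translation is purely formal given the completion machinery already set up in \S\ref{sec:completion}.
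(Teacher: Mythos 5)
Your overall skeleton matches the paper's: reduce to the generators via Proposition~\ref{prop:noncommutativeSchur} and the Knuth relations, compute the generator case fermionically, and use Corollary~\ref{cor:X(n)} together with \eqref{eq:uuu} to identify the straightening signs $(-\beta)^{\zet{\overline{\mu+\ee_{m_1}+\cdots+\ee_{m_i}}}-\zet{\mu+\ee_{m_1}+\cdots+\ee_{m_i}}}$ with the action of $u_{m_i}\cdots u_{m_1}$. But the engine of the proof --- the ``central computation'' you defer --- is exactly the step you do not supply, and the tool you cite for it is the wrong one. Lemma~\ref{lemma:e_theta_action_elem_end} tells you how to commute $e^{H(x)}$ past a fermion field $\psi(z)$ that is already present; it does not realize multiplication of a fixed expectation value $\bra{0}e^{H(x)}X\ket{-r}$ by $h_i(x)$ or $e_i(x)$, and ``inserting an extra fermion'' changes the charge sector and corresponds to adding a row, not to multiplying by a symmetric function. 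The paper's mechanism is bosonic, not fermionic: one writes $p_i(x)f(x)=\bra{0}e^{H(x)}a_{-i}\psi_{n_1-1}e^{\Theta}\cdots\psi_{n_r-r}e^{\Theta}\ket{-r}$ using $e^{H(x)}a_{-i}e^{-H(x)}=a_{-i}+p_i(x)$, then pushes $a_{-i}$ to the right with $[a_{-i},\psi_m]=\psi_{m+i}$ and the crucial fact $[a_{-i},e^{\Theta}]=0$, so that each commutator raises one fermion index by $i$; finally $e_i(x)=E_i(p_1(x),\dots,p_i(x))$ converts this into the sum over $1\leq m_1<\cdots<m_i\leq r$ of single-unit shifts at $i$ distinct positions, which is exactly $e_i(u_1,\dots,u_r)\cdot\mu$ after straightening. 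Without some substitute for this power-sum computation, the generator case is asserted rather than proved.

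A secondary inaccuracy: you locate the source of the $\bmod\, M_{r+1}$ error in Proposition~\ref{prop:near_criterion} (the $e^{s\Theta}-1$ discrepancy). In the paper's argument the expression $Y(\mu)$ equals $G_\mu(x)$ exactly (by Lemma~\ref{lemma:various_expressions} and Theorem~\ref{thm:main}); the only term that is discarded modulo $M_{r+1}$ is $\bra{0}e^{H(x)}\psi_{n_1-1}e^{\Theta}\cdots\psi_{n_r-r}e^{\Theta}a_{-i}\ket{-r}$, which lies in $M_{r+1}$ by Lemma~\ref{lemma:small_polynomial_1}. Your final paragraph translating the congruence into the substitution identity via $f(x)\in M_{n+1}\iff f(x_1,\dots,x_n,0,0,\dots)=0$ is correct and matches the paper's framework.
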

\begin{proof}
From Proposition \ref{prop:noncommutativeSchur}, it suffices to prove
\[
e_i(x)G_\mu(x)\equiv 
\pi(e_i(u_1,\dots,u_r)\cdot \mu)
\mod{M_{r+1}}
\]
for $i=0,1,\dots,r$.
Write
$
f(x)=
\bra{0}
e^{H(x)}
\psi_{n_1-1}e^{\Theta}
\cdots
\psi_{n_r-r}e^{\Theta}
\ket{-r}
$ 
for a sequence of integers $n_1,\dots,n_r$.
Since $e^{H(x)}a_{-i}e^{-H(x)}=a_{-i}+p_i(x)$, $[a_{-i},\psi_{m}]=\psi_{m+i}$, and $[a_{-i},e^{\Theta}]=0$, we have
\begin{align*}
p_i(x)f(x)
&=
\bra{0}
e^{H(x)}
a_{-i}
\psi_{n_1-1}e^{\Theta}
\cdots
\psi_{n_r-r}e^{\Theta}
\ket{-r}\\
&=
\sum_{j=1}^{r}
\bra{0}
e^{H(x)}
\psi_{n_1-1}e^{\Theta}
\cdots
\psi_{n_j-j+i}e^{\Theta}
\cdots
\psi_{n_r-r}e^{\Theta}
\ket{-r}\\
&\hspace{50pt}+
\bra{0}
e^{H(x)}
\psi_{n_1-1}e^{\Theta}
\cdots
\psi_{n_r-r}e^{\Theta}
a_{-i}
\ket{-r}.
\end{align*}
This equation implies
\begin{equation}\label{eq:p_if}
p_i(x)f(x)
\equiv
\sum_{j=1}^{r}
\bra{0}
e^{H(x)}
\psi_{n_1-1}e^{\Theta}
\cdots
\psi_{n_j-j+i}e^{\Theta}
\cdots
\psi_{n_r-r}e^{\Theta}
\ket{-r}\mod{M_{r+1}}
\end{equation}
because $\bra{0}
e^{H(x)}
\psi_{n_1-1}e^{\Theta}
\cdots
\psi_{n_r-r}e^{\Theta}
a_{-i}
\ket{-r}
$
is contained in $M_{r+1}$ by Lemma \ref{lemma:small_polynomial_1}.
Let $E_i(p_1,\dots,p_i)$ be the polynomial in $p_1,\dots,p_i$ that satisfies $$e_i(x)=E_i(p_1(x),\dots,p_i(x)).$$
From (\ref{eq:p_if}), we show that the product $e_i(x)f(x)$ satisfies
\begin{align*}
&e_i(x)f(x)
=
\bra{0}
e^{H(x)}
E_i(a_{-1},\dots,a_{-i})
\psi_{n_1-1}e^{\Theta}
\cdots
\psi_{n_r-r}e^{\Theta}
\ket{-r}\\
&\equiv\ \ 
\hbox to 0pt{\lower 5pt\hbox{\hspace{-13pt}$\displaystyle \mathop{}_{1\leq m_1<\dots<m_i\leq r}$}}
\sum\ \ 
\bra{0}
e^{H(x)}
\psi_{n_1-1}e^{\Theta}
\cdots
\psi_{n_{m_1}-m_1+1}e^{\Theta}
\cdots
\psi_{n_{m_i}-m_i+1}e^{\Theta}
\cdots
\psi_{n_r-r}e^{\Theta}
\ket{-r}.
\end{align*}
For any sequence $n=(n_1,\dots,n_r)$, write
\[
Y(n)=
\bra{0}e^{H(x)}
\psi_{n_1-1}e^{\Theta}
\cdots
\psi_{n_{r}-r}e^{\Theta}
\ket{-r}.
\]
(Note that $\pi(\lambda)=Y(\lambda)$ if $\lambda$ is a partition.)
Substituting $f(x)=G_\mu(x)$ to the above equation gives
\begin{align*}
&e_i(x)G_\mu(x)
\equiv 
\sum_{1\leq m_1<\dots<m_i\leq r}
Y(\mu+\ee_{m_1}+\dots+\ee_{m_i})
\mod{M_{r+1}}.
\end{align*}
From Corollary \ref{cor:X(n)} and (\ref{eq:uuu}), this implies
\begin{align*}
&e_i(x)G_\mu(x)\\
&\equiv
\sum_{1\leq m_1<\dots<m_i\leq r}
(-\beta)^{\zet{\overline{\mu+\ee_{m_1}+\cdots+\ee_{m_i}}}
-
\zet{\mu
{\color{red}+\ee_{m_1}+\cdots+\ee_{m_i}}
}}
Y(\overline{\mu+\ee_{m_1}+\cdots+\ee_{m_i}})\\
&=
\sum_{1\leq m_1<\dots<m_i\leq r}
\pi\left(
u_{m_i}\dots u_{m_1}\cdot \mu
\right)\\
&=\pi(e_{i}(u_1,\dots,u_r)\cdot \mu)
\mod{M_{r+1}}.
\end{align*}
\end{proof}

From Proposition \ref{prop:product_sG}, we find a systematic way to express symmetric polynomials of the form $s_\lambda(x_1,\dots,x_n)G_\mu(x_1,\dots,x_n)$ as a linear combination of $G_\lambda(x_1,\dots,x_n)$'s.
See the examples below.
\begin{example}
Since
\[
h_2(u_1,u_2)=s_{\scriptsize\Tableau{ & }}(u_1,u_2)=u_1u_1+u_1u_2+u_2u_2,
\]
we have 
\begin{gather*}
h_2(x_1,x_2)\cdot G_\emptyset(x_1,x_2)
=
G_{\scriptsize\Tableau{ & }}(x_1,x_2)
-\beta
G_{\scriptsize\Tableau{ & \\ \\}}(x_1,x_2)
+
\beta^2
G_{\scriptsize\Tableau{ & \\ &}}(x_1,x_2),
\\
h_2(x_1,x_2)
\cdot
G_{\scriptsize\Tableau{ \\}}(x_1,x_2)
=
G_{\scriptsize\Tableau{ & & }}(x_1,x_2)
+
G_{\scriptsize\Tableau{ & \\ \\}}(x_1,x_2)
-\beta
G_{\scriptsize\Tableau{ & \\ &}}(x_1,x_2),
\\
h_2(x_1,x_2)
\cdot
G_{\scriptsize\Tableau{ & }}(x_1,x_2)
=
G_{\scriptsize\Tableau{ & & & }}(x_1,x_2)
+
G_{\scriptsize\Tableau{ & & \\ \\}}(x_1,x_2)
+
G_{\scriptsize\Tableau{ & \\ &}}(x_1,x_2),\ \textit{etc.}
\end{gather*}
\end{example}

\begin{example}
Let $\lambda=\Tableau{ & \\ \\}$.
All Young tableaux of the shape $\lambda$ with entries at most $3$ are given as follows:
\[
\Tableau{1&1\\2}\quad
\Tableau{1&2\\2}\quad
\Tableau{1&3\\2}\quad
\Tableau{1&1\\3}\quad
\Tableau{1&2\\3}\quad
\Tableau{1&3\\3}\quad
\Tableau{2&2\\3}\quad
\Tableau{2&3\\3}.
\]
We therefore have
\begin{align*}
&s_{\scriptsize\Tableau{ & \\ \\}}(u_1,u_2,u_3)\\
&=
u_2u_1u_1+u_2u_1u_2+u_2u_1u_3+u_3u_1u_1+
u_3u_1u_2+u_3u_1u_3+u_3u_2u_2+u_3u_2u_3.
\end{align*}
By using this, we obtain $(s_\lambda=s_\lambda(x_1,x_2,x_3)$, $G_\lambda=G_\lambda(x_1,x_2,x_3))$
\begin{gather*}
s_{\scriptsize\Tableau{ & \\ \\}}{\color{red} G_\emptyset}
=
G_{\scriptsize\Tableau{ & \\ \\}}
-\beta
G_{\scriptsize\Tableau{ & \\ &}}
-2\beta
G_{\scriptsize\Tableau{ & \\ \\ \\}}
+2\beta^2
G_{\scriptsize\Tableau{ & \\ & \\ \\}}
-2{\color{red}\beta^3}
G_{\scriptsize\Tableau{ & \\ & \\ & \\}},
\\
s_{\scriptsize\Tableau{ & \\ \\}}
G_{\scriptsize\Tableau{ \\}}
=
G_{\scriptsize\Tableau{ & & \\ \\}}
+
G_{\scriptsize\Tableau{ & \\ & \\}}
+
G_{\scriptsize\Tableau{ & \\ \\ \\}}
-2\beta
G_{\scriptsize\Tableau{ & \\ & \\ \\}}
-\beta
G_{\scriptsize\Tableau{ & & \\ \\ \\}}
+2\beta^2
G_{\scriptsize\Tableau{ & \\ & \\ & \\}},\ \textit{etc.}
\end{gather*}
\end{example}

\section{Application 2: Pieri type formula for $g_\lambda$}\label{sec:app2}

Let ${d}_i:\mathfrak{X}\to \mathfrak{X}$ be the $\QQ[\beta]$-linear operator defined by
\begin{gather*}
{d}_i\cdot \lambda=
\begin{cases}
\lambda\cup \{\mbox{a box in $i$-th row} \}, & \mbox{if possible},\\
(-\beta)\cdot \lambda, & \mbox{otherwise}.
\end{cases}
\end{gather*}
By seeing their actions on the basis, we can check that the operators $d_1,d_2,\dots$ satisfy the Knuth relation:
\begin{lemma}\label{lemma:knuth2}
We have the following commutative relations.
\begin{eqnarray}\label{eq:Knuth2}
\begin{gathered}
d_id_kd_j=d_kd_id_j,\quad i\leq j<k,\\
d_jd_id_k=d_jd_kd_i,\quad i<j\leq k.
\end{gathered}
\end{eqnarray}
\end{lemma}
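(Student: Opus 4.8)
The plan is to verify both relations in \eqref{eq:Knuth2} directly on the basis of $\mathfrak{X}$. Since $\mathfrak{X}$ is free over $\QQ[\beta]$ on the set of partitions, it suffices to apply each relation to an arbitrary partition $\lambda$ and compare the outputs, exactly as in the proof of Lemma \ref{lemma:knuth}. The observation that makes this manageable is that $d_p$ alters only the $p$-th part of a partition, and whether it adds a box or instead produces the factor $-\beta$ depends only on the two consecutive parts $\lambda_{p-1}$ and $\lambda_p$. Consequently $d_p$ and $d_q$ commute as operators whenever $|p-q|\geq 2$, because their actions and their feasibility conditions involve the disjoint row sets $\{p-1,p\}$ and $\{q-1,q\}$, so neither operator can change what the other does.

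Using this, I would collapse both relations to a single pair of adjacent identities. In the first relation $d_id_kd_j=d_kd_id_j$ with $i\leq j<k$, the operators $d_i$ and $d_k$ already commute unless $k=i+1$; but $i\leq j<k=i+1$ forces $j=i$, so the only substantive case is
\[
d_id_{i+1}d_i\cdot\lambda=d_{i+1}d_id_i\cdot\lambda.
\]
Likewise the second relation $d_jd_id_k=d_jd_kd_i$ with $i<j\leq k$ is automatic unless $k=i+1$, which then forces $j=i+1=k$, leaving
\[
d_{i+1}d_id_{i+1}\cdot\lambda=d_{i+1}d_{i+1}d_i\cdot\lambda.
\]
Both remaining identities are local to the three parts $\lambda_{i-1},\lambda_i,\lambda_{i+1}$, so I would settle them by a finite case analysis according to whether each of $\lambda_{i-1}>\lambda_i$ and $\lambda_i>\lambda_{i+1}$ holds strictly or with equality, as these are precisely the conditions that decide whether a given $d_p$ adds a box or contributes $-\beta$.

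The main obstacle will be the correct bookkeeping of the powers of $-\beta$ across the two orders of composition. The phenomenon to watch is that applying $d_i$ first can enlarge $\lambda_i$ and thereby \emph{unlock} a box-addition for $d_{i+1}$ that would otherwise have been blocked, and symmetrically when $d_{i+1}$ acts before $d_i$; so in each boundary case one must confirm that the two sides yield not merely the same underlying partition but the same accompanying power of $-\beta$. A representative check is the profile $\lambda_{i-1}=\lambda_i\geq\lambda_{i+1}$, where on one side an early infeasible step emits $-\beta$ while on the other the factor surfaces at a later step; tracking the feasibility of each move shows the coefficients coincide (for instance, both sides reduce to $\beta^2(\lambda+\ee_{i+1})$ when $\lambda_i>\lambda_{i+1}$ and to $-\beta^3\lambda$ when $\lambda_{i-1}=\lambda_i=\lambda_{i+1}$). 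Once every case is tabulated, both adjacent identities hold, and together with the commutativity for non-adjacent indices this establishes \eqref{eq:Knuth2}. As with Lemma \ref{lemma:knuth}, the verification is entirely mechanical; the only genuine care needed lies in the enumeration of cases and the accounting of the $-\beta$ factors.
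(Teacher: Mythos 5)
Your proof is correct and takes essentially the same route as the paper, which simply asserts that the relations ``can be checked by seeing their actions on the basis'' and gives no further detail. Your reduction to the two adjacent identities $d_id_{i+1}d_i=d_{i+1}d_id_i$ and $d_{i+1}d_id_{i+1}=d_{i+1}d_{i+1}d_i$ (via commutativity of $d_p$ and $d_q$ for $|p-q|\geq 2$) followed by the case analysis on $\lambda_{i-1},\lambda_i,\lambda_{i+1}$ is a sound way to organize that check, and the representative coefficients you report (e.g.\ $\beta^2(\lambda+\ee_{i+1})$ and $-\beta^3\lambda$) are indeed what both sides produce.
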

\begin{example}
\[
d_1\cdot \Tableau{ \\ \\}=\Tableau{ & \\ \\},\quad
d_3\cdot \Tableau{ \\ \\}=\Tableau{ \\ \\ \\},\quad
d_2\cdot \Tableau{ \\ \\}=
d_4\cdot \Tableau{ \\ \\}=
d_5\cdot \Tableau{ \\ \\}=
\cdots=
-\beta\cdot \Tableau{ \\ \\}.
\]
\end{example}

For any Young tableau $T$, we write
\[
d^T=d_{w_T(1)}d_{w_T(2)}\cdots d_{w_T(N)}.
\]
We also define the non-commutative Schur functions $s_\lambda(d_1,\dots,d_n)$ in the similar manner to $s_\lambda(u_1,\dots,u_n)$ (Definition \ref{defi:noncommutativeSchur}).

Let $\rho:\mathfrak{X}\to \Lambda$ be the $\QQ[\beta]$-linear map defined by
\[
\rho:\lambda\mapsto g_\lambda(x).
\]
The following proposition is an analogy of Proposition \ref{prop:product_sG}.
\begin{prop}\label{prop:product_sg}
For $\ell(\lambda)\leq s$, $\ell(\mu)\leq r$, we have 
\[
s^{(r+s-1)}_\lambda(x;-\beta)g_\mu(x)= \rho(s_\lambda(d_1,\dots,d_{r+s})\cdot \mu),
\]
where 
$s^{(r+s-1)}_\lambda(x;-\beta)=s_\lambda(\overbrace{-\beta,\dots,-\beta}^{r+s-1},x_1,x_2,\dots)$.
\end{prop}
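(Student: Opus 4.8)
The plan is to mirror the proof of Proposition~\ref{prop:product_sG} as closely as possible, transporting each step from the $\Theta$-world to the $\theta$-world. By Proposition~\ref{prop:noncommutativeSchur}, the ring of non-commutative Schur functions in $d_1,\dots,d_{r+s}$ is commutative and generated by the elementary operators $e_i(d_1,\dots,d_{r+s})$, so it suffices to establish the claim when $s_\lambda$ is replaced by a single power sum (or elementary symmetric function). Concretely, I would first write a generic element in the free-fermionic form $f(x)=\bra{0}e^{H(x)}\psi_{n_1-1}e^{-\theta}\cdots\psi_{n_{r}-(r)}e^{-\theta}\ket{-r}$ and compute the action of multiplication by $p_i(x)$ on it, using $e^{H(x)}a_i e^{-H(x)}=a_i+p_i(x)$ together with the commutator $[a_i,\psi_m]=\psi_{m-i}$ and $[a_i,e^{-\theta}]=0$ from \eqref{eq:relation_added}. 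The crucial sign difference from the $\Theta$-case is that here $a_i$ ($i>0$) \emph{lowers} indices, i.e. $[a_i,\psi_m]=\psi_{m-i}$, so the boxes get removed rather than added; this is exactly what makes the operators $d_i$ (which add a box in a prescribed row, or contribute $-\beta$) appear instead of the $u_i$.

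The \textbf{main computation} is then to translate the statement $p_i(x)f(x)=\sum_j(\text{shifted term})+(\text{remainder})$ into an identity modulo the relevant subspace. First I would verify that the ``annihilation tail'' $\bra{0}e^{H(x)}\psi_{n_1-1}e^{-\theta}\cdots\psi_{n_r-r}e^{-\theta}\,a_i\ket{-r}$ vanishes: since $a_i\ket{-r}=0$ for $i>0$, this term is in fact identically zero (a cleaner situation than the $\Theta$-case, where it only vanished modulo $M_{r+1}$). That is the first place I expect a genuine simplification relative to Proposition~\ref{prop:product_sG}. Next, applying the polynomial relation $e_i(x)=E_i(p_1(x),\dots,p_i(x))$ and the analogue of Corollary~\ref{cor:x(n)} (which governs how $x(n)$ reduces to $x(\underline n)$ with a power of $-\beta$), I would rewrite each resulting term $x(\mu+\ee_{m_1}+\cdots+\ee_{m_i})$ in terms of the operators $d_{m_1},\dots,d_{m_i}$ via the normalization $\underline{n}_j=\min[n_1,\dots,n_j]$. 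The bookkeeping here---matching $\rho(d_{m_1}\cdots d_{m_i}\cdot\mu)$ with the fermionic term and tracking the $(-\beta)$-powers from Corollary~\ref{cor:x(n)}---is the analogue of the $(-\beta)^{\zet{\overline{\cdots}}-\zet{\cdots}}$ bookkeeping in the earlier proof, and it is where I would be most careful.

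The \textbf{subtle point} that I expect to be the real obstacle is the appearance of the shifted Schur function $s^{(r+s-1)}_\lambda(x;-\beta)$ rather than the plain $s_\lambda(x)$ on the left-hand side. In the Grothendieck case the multiplier was simply $s_\lambda(x)$; here the dual side forces specialization of $r+s-1$ of the variables to $-\beta$. I anticipate this arises because multiplication by $p_i(x)$ interacts with the lowering action of $a_i$ through the $d_i$-operators' ``otherwise'' branch (the $-\beta$ contribution when no box can be added in row $i$), and these extra $-\beta$ contributions must be packaged as evaluations of the power sums at $-\beta$. The key identity to isolate is therefore how $e_i(d_1,\dots,d_{r+s})$ acting on a partition of length $\leq r$ produces precisely the terms whose generating function is $e_i$ evaluated on the alphabet $(\underbrace{-\beta,\dots,-\beta}_{r+s-1},x_1,x_2,\dots)$; once this single-generator identity is verified, Proposition~\ref{prop:noncommutativeSchur} upgrades it to arbitrary $s_\lambda$ by the Jacobi--Trudi determinant, and the general statement follows. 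Establishing that shifted-alphabet identity at the level of elementary symmetric functions is the step I would allot the most effort to.
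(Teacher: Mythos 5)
Your overall strategy (reduce to elementary symmetric functions via Proposition \ref{prop:noncommutativeSchur}, realize multiplication by $p_i(x)$ fermionically, and straighten the resulting terms with Corollary \ref{cor:x(n)}) is the same as the paper's, but the central computation as you describe it would fail, and it fails exactly at the point you yourself flag as ``the real obstacle.'' First, multiplication by $p_i(x)$ is implemented by inserting $a_{-i}$, not $a_i$: from \eqref{eq:relation_added} one has $[H(x),a_{-i}]=p_i(x)$ while $[H(x),a_i]=0$, so $e^{H(x)}a_ie^{-H(x)}=a_i$ and your identity $e^{H(x)}a_ie^{-H(x)}=a_i+p_i(x)$ is false for the positive-index operator. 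With the correct operator $a_{-i}$ the commutator $[a_{-i},\psi_m]=\psi_{m+i}$ \emph{raises} indices, i.e.\ adds boxes, consistent with the fact that $d_i$ adds a box; your ``boxes get removed'' picture is backwards. Consequently your claimed simplification (the tail vanishes because $a_i\ket{-r}=0$) is an artifact of the wrong operator: $a_{-i}\ket{-r}\neq 0$ in general.

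Second, and decisively, $[a_{-i},e^{-\theta}]=-(-\beta)^i e^{-\theta}$ is \emph{not} zero (it is $[a_{-i},e^{\Theta}]=0$ that holds in the Grothendieck case), and this nonvanishing commutator is the entire source of the specialized alphabet. The paper pads the fermionic expression for $g_\mu(x)$ with $s$ extra factors, writing $g(x)=\bra{0}e^{H(x)}\psi_{n_1-1}e^{-\theta}\cdots\psi_{n_r-r}e^{-\theta}\psi_{-r-1}e^{-\theta}\cdots\psi_{-r-s}\ket{-r-s}$, so that pushing $a_{-i}$ to the right produces the $r+s$ index-shifted terms (matching the $r+s$ letters of $e_i(d_1,\dots,d_{r+s})$), plus the term $-(r+s-1)(-\beta)^i g(x)$ coming from the $r+s-1$ copies of $e^{-\theta}$, plus a tail $\bra{0}\cdots a_{-i}\ket{-r-s}$ that vanishes for $i\leq s$ precisely because of the padding. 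Moving $-(r+s-1)(-\beta)^i g(x)$ to the left-hand side converts $p_i(x)$ into $p_i^{(r+s-1)}(x;-\beta)=(r+s-1)(-\beta)^i+p_i(x)$, and this is where $s^{(r+s-1)}_\lambda(x;-\beta)$ comes from; it has nothing to do with the ``otherwise'' branch of $d_i$, which is accounted for separately by the straightening of Corollary \ref{cor:x(n)}. Your setup, with $[a_i,e^{-\theta}]=0$ and no padding, would yield only $r$ shifted terms and the unshifted multiplier $p_i(x)$, and could not produce the stated identity.
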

\begin{proof}
From Proposition \ref{prop:noncommutativeSchur}, it suffices to prove
\begin{equation}\label{eq:elementary_and_g}
e^{(r+s-1)}_i(x;-\beta)g_\mu(x)= \rho(e_i(d_1,\dots,d_{r+s})\cdot \mu)
\end{equation}
for $0\leq i\leq s$.
Let 
$g(x)=
\bra{0}
e^{H(x)}
\psi_{n_1-1}e^{-\theta}
\cdots
\psi_{n_r-r}e^{-\theta}
\psi_{-r-1}e^{-\theta}
\cdots
\psi_{-r-s} 
\ket{-r-s}
$
for any sequence of integers $n_1,\dots,n_{r}$.
Since $[a_{-i},e^{-\theta}]=-(-\beta)^ie^{-\theta}$, we have
\begin{align*}
&p_i(x)g(x)\\
&=
\bra{0}
e^{H(x)}
a_{-i}
\psi_{n_1-1}e^{-\theta}
\cdots
\psi_{n_r-r}e^{-\theta}
\cdots
\psi_{-r-s} 
\ket{-r-s}\\
&=
\sum_{j=1}^{r+s}
\left\{
\bra{0}
e^{H(x)}
\psi_{n_1-1}e^{-\theta}
\cdots
\psi_{n_j-j+i}e^{-\theta}
\cdots
\psi_{-r-s} 
\ket{-r-s}
\right\}\\
&\hspace{10pt}
-(r+s-1)(-\beta)^i g(x)
+
\bra{0}
e^{H(x)}
\psi_{n_1-1}e^{-\theta}
\cdots
\psi_{n_r-r}e^{-\theta}
\cdots
\psi_{-r-s} 
a_{-i}
\ket{-r-s}.
\end{align*}
Note that the last term of this expression must vanish if $i\leq s$.
Moreover, because
\[
p^{(r+s-1)}_i(x;-\beta)=p_i(\overbrace{-\beta,\dots,-\beta}^{r+s-1},x)=\overbrace{(-\beta)^i+\cdots+(-\beta)^i}^{r+s-1}+x_1^i+x_2^i+\cdots,
\]
the equation can be simplified as
\[
p^{(r+s-1)}_i(x;-\beta)g(x)
=
\sum_{j=1}^{r+s}
\bra{0}
e^{H(x)}
\psi_{n_1-1}e^{-\theta}
\cdots
\psi_{n_j-j+i}e^{-\theta}
\cdots
\psi_{-r-s} 
\ket{-r-s}.
\]
From this, we find that the product $e^{(r+s-1)}_i(x;-\beta)g(x)$ satisfies the following equation:
\begin{align*}
&e^{(r+s-1)}_i(x;-\beta)g(x)\\
&=
\hbox to 0pt{\hspace{-13pt}\lower 5pt\hbox{$\displaystyle\mathop{}_{1\leq m_1<\dots<m_i\leq r+s}$}}
\sum\ \ 
\bra{0}
e^{H(x)}
\psi_{n_1-1}e^{-\theta}
\cdots
\psi_{n_{m_1}-m_1+1}e^{-\theta}
\cdots
\psi_{n_{m_i}-m_i+1}e^{-\theta}
\cdots
\psi_{-r-s} 
\ket{-r-s}.
\end{align*}
Substituting $g(x)=g_\mu(x)$ to this equation gives (\ref{eq:elementary_and_g}).
\end{proof}

\begin{example}
Let $s=1$ and $r=0$.
In this case $s^{(0)}_{(n)}(x;-\beta)=h_n(x)$.
Since 
$h_n(d_1)\cdot \emptyset=d_1^n\cdot \emptyset=(n)$,
we have
$
h_n(x)=g_{(n)}(x)
$.
\end{example}

\begin{example}
Let $s=n$ and $r=0$.
In this case $s^{(n-1)}_{(1^n)}(x;-\beta)=e^{(n-1)}_n(x;-\beta)
=\sum_{j=0}^{n}(-\beta)^j\binom{n-1}{j}e_{n-j}(x)
$.
Since 
$e_n(d_1,\dots,d_n)\cdot \emptyset
=
d_n\cdots d_2d_1\cdot \emptyset
=(1^n)$,
we have
$
\sum_{j=0}^{n}(-\beta)^j\binom{n-1}{j}e_{n-j}(x)=g_{(1^n)}(x)
$.
\end{example}

\subsection{Pieri type formula for $e_i(x)g_\lambda(x)$}

Proposition \ref{prop:product_sg} is unfortunately a bit complicated as it contains a function of the form $s^{(r+s-1)}_\lambda(x;-\beta)$.
However, when the partition $\lambda$ is relatively simple (for example, if $\lambda=(1^i)$ or $\lambda=(i)$), we can handle the situation.
In the following, we consider the expansions of symmetric functions of the form $e_i(x)g_\lambda(x)$ and $h_i(x)g_\lambda(x)$.

To calculate the product $e_i(x)g_\lambda(x)$, it is convenient to introduce the formal power series
$
E(t)=\sum_{i=0}^{\infty}e_i(x)t^i=\prod_{j=1}^\infty(1+x_jt)
$.
From the expression $e^{(p)}_i(x;-\beta)=e_i(\overbrace{-\beta,\dots,-\beta}^p,x_1,x_2,\dots)$, we find
\[
\sum_{i=0}^{\infty}e^{(p)}_i(x;-\beta)t^i=(1-\beta t)^{p}E(t).
\]
Therefore, from Proposition \ref{prop:product_sg}, we have
\begin{equation}\label{eq:rinji}
E(t)g_\lambda(x)\equiv
\rho\left(
(1-\beta t)\cdot 
\left(
\frac{1+d_{r+s}t}{1-\beta t}
\cdots 
\frac{1+d_2t}{1-\beta t}\cdot
\frac{1+d_1t}{1-\beta t}
\right)\cdot \lambda
\right)\mod{t^{s+1}}
\end{equation}
for $\ell(\lambda)\leq r$.
Taking the limit as $r,s\to \infty$ gives the formal expression
\begin{equation}\label{eq:rinji2}
E(t)g_\lambda(x)=
\rho\left(
(1-\beta t)\cdot 
\left(
\cdots \cdot
\frac{1+d_2t}{1-\beta t}\cdot
\frac{1+d_1t}{1-\beta t}
\right)\cdot \lambda
\right).
\end{equation}
For example, since
\begin{gather*}
(1-\beta t)\cdot 
\left(
\cdots 
\frac{1+d_3t}{1-\beta t}\cdot
\frac{1+d_2t}{1-\beta t}\cdot
\frac{1+d_1t}{1-\beta t}
\right)\cdot \emptyset
=
\emptyset
+
\Tableau{ \\}\frac{t}{1-\beta t}
+
\Tableau{ \\ \\}\frac{t^2}{(1-\beta t)^2}
+\cdots,
\\
\begin{split}
&(1-\beta t)\cdot 
\left(
\cdots 
\frac{1+d_3t}{1-\beta t}\cdot
\frac{1+d_2t}{1-\beta t}\cdot
\frac{1+d_1t}{1-\beta t}
\right)\cdot \Tableau{ \\}\\
&=
\left(
\Tableau{ \\}\frac{1}{1-\beta t}
+
\Tableau{ \\ \\}\frac{t}{(1-\beta t)^2}
+
\Tableau{ \\ \\ \\}\frac{t^2}{(1-\beta t)^3}
+
\cdots
\right)\\
&\hspace{30pt}
+
\left(
\Tableau{ & }\frac{t}{1-\beta t}
+
\Tableau{ & \\ \\}\frac{t^2}{(1-\beta t)^2}
+
\Tableau{ & \\ \\ \\}\frac{t^3}{(1-\beta t)^3}
+
\cdots
\right),
\end{split}
\end{gather*}
we have
\begin{gather*}
E(t)=
g_\emptyset
+
g_{\scriptsize\Tableau{ \\}}t
+
(
\beta 
g_{\scriptsize\Tableau{ \\}}
+
g_{\scriptsize\Tableau{ \\ \\}}
)t^2
+
(
\beta^2
g_{\scriptsize\Tableau{ \\}}
+
2\beta
g_{\scriptsize\Tableau{ \\ \\}}
+
g_{\scriptsize\Tableau{ \\ \\ \\}}
)t^3
+\cdots,
\\
\begin{split}
&E(t)g_{\scriptsize\Tableau{ \\}}
=
g_{\scriptsize\Tableau{ \\}}
+
(\beta
g_{\scriptsize\Tableau{ \\}}
+
g_{\scriptsize\Tableau{ \\ \\}}
+
g_{\scriptsize\Tableau{ & }}
)t\\
&\hspace{30pt}
+
(\beta^2
g_{\scriptsize\Tableau{ \\}}
+2\beta
g_{\scriptsize\Tableau{ \\ \\}}
+
\beta
g_{\scriptsize\Tableau{ & }}
+
g_{\scriptsize\Tableau{ \\ \\ \\}}
+
g_{\scriptsize\Tableau{ & \\ \\}}
)t^2
+\cdots.
\end{split}
\end{gather*}

\subsection{Pieri type formula for $h_i(x)g_\lambda(x)$}

Let $H(t):=\sum_{i=0}^{\infty}h_i(x)t^i=\prod_{j=1}^\infty\frac{1}{1-{\color{red}x_j}t}$ be the generating function of $h_i(x)$.
Since $h_i^{(p)}(x;-\beta)=h_i(\overbrace{-\beta,\dots,-\beta}^p,x_1,x_2,\dots)$, we have
\[
\sum_{i=0}^{\infty}h^{(p)}_i(x;-\beta)t^i=(1+\beta t)^{-p}H(t).
\]
Therefore, from Proposition \ref{prop:product_sg}, we have
\begin{equation}\label{eq:inftyexpress}
H(t)g_\lambda(x)=\rho\left(
\frac{1}{1+\beta t}
\frac{1+\beta t}{1-d_1t}
\frac{1+\beta t}{1-d_2t}
\cdots
\cdot \lambda
\right),
\end{equation}
where $\frac{1}{1-d_it}=1+d_it+d_i^2t^2+\cdots$.
We note that the expression (\ref{eq:inftyexpress}) does not cause a confusion because
$
\frac{1+\beta t}{1-d_it}\cdot \lambda=\lambda
$
for $i>\ell(\lambda)+1$.
For example, we have
\begin{gather*}
\frac{1}{1-d_1t}\cdot \Tableau{ & }
=
\Tableau{ & }
+
\Tableau{ &  & }t
+
\Tableau{ & & &}t^2
+\cdots,
\\
\frac{1+\beta t}{1-d_2t}\cdot \Tableau{ & }=
\Tableau{ & }
+
\left(
\beta \Tableau{ & }
+
\Tableau{  & \\ \\}
\right)t
+
\left(
\beta \Tableau{ & \\ \\}
+
\Tableau{  & \\ & \\}
\right)t^2, \ \textit{etc.}
\end{gather*}
Therefore we obtain
\begin{gather*}
H(t)=
g_\emptyset
+
g_{\scriptsize\Tableau{ \\}}t
+
g_{\scriptsize\Tableau{ & }}t^2
+
g_{\scriptsize\Tableau{ & & }}t^3
+\cdots,
\\
H(t)g_{\scriptsize\Tableau{ \\}}
=
g_{\scriptsize\Tableau{ \\}}
+
(\beta 
g_{\scriptsize\Tableau{ \\}}
+
g_{\scriptsize\Tableau{ & }}
{\color{red}
+
g_{\scriptsize\Tableau{ \\  \\}}
}
)t
+
(\beta 
g_{\scriptsize\Tableau{ & }}
+
g_{\scriptsize\Tableau{ & & }}
+
g_{\scriptsize\Tableau{ & \\ \\}}
)t^2
+\cdots,
\\
\begin{split}
&H(t)g_{\scriptsize\Tableau{ & }}
=
g_{\scriptsize\Tableau{ & }}
+
(\beta 
g_{\scriptsize\Tableau{ & }}
+
g_{\scriptsize\Tableau{ & \\ \\}}
+
g_{\scriptsize\Tableau{ & & }}
)t\\
&\hspace{30pt}
+
(\beta 
g_{\scriptsize\Tableau{ & & }}
+
\beta 
g_{\scriptsize\Tableau{ & \\ \\ }}
+
g_{\scriptsize\Tableau{ & & & }}
+
g_{\scriptsize\Tableau{ & & \\ \\}}
+
g_{\scriptsize\Tableau{ & \\ & \\}}
)t^2
+\cdots.
\end{split}
\end{gather*}

\section*{Acknowledgments}

This work is partially supported by JSPS Kakenhi Grant Number 19K03605.


\providecommand{\bysame}{\leavevmode\hbox to3em{\hrulefill}\thinspace}
\providecommand{\MR}{\relax\ifhmode\unskip\space\fi MR }
\providecommand{\MRhref}[2]{%
  \href{http://www.ams.org/mathscinet-getitem?mr=#1}{#2}
}
\providecommand{\href}[2]{#2}

\end{document}